\def\NZQ{\mathbb}               
\def\NN{{\NZQ N}}
\def\QQ{{\NZQ Q}}
\def\ZZ{{\NZQ Z}}
\def\RR{{\NZQ R}}
\newtheorem{Theorem}{Theorem}[section]
\newtheorem{Lemma}[Theorem]{Lemma}
\newtheorem{Corollary}[Theorem]{Corollary}
\newtheorem{Proposition}[Theorem]{Proposition}
\newtheorem{Remark}[Theorem]{Remark}
\newtheorem{Definition}[Theorem]{Definition}
\let\epsilon\varepsilon
\let\phi=\varphi
\let\kappa=\varkappa
\begin{document}
\title{Mixed multiplicities of  filtrations}
\author{Steven Dale Cutkosky}
\author{Parangama Sarkar}
\author{Hema Srinivasan}
\thanks{The first author was partially supported by NSF grant DMS-1700046.}
\thanks{The second author was supported by IUSSTF, SERB Indo-U.S. Postdoctoral Fellowship 2017/145 and DST-INSPIRE India}

\address{Steven Dale Cutkosky, Department of Mathematics,
University of Missouri, Columbia, MO 65211, USA}
\email{cutkoskys@missouri.edu}

\address{Parangama Sarkar, Department of Mathematics,
University of Missouri, Columbia, MO 65211, USA}
\email{parangamasarkar@gmail.com}

\address{Hema Srinivasan, Department of Mathematics,
University of Missouri, Columbia, MO 65211, USA}
\email{srinivasanh@missouri.edu}
\begin{abstract}
In this paper we define and explore properties of mixed multiplicities of (not necessarily Noetherian) filtrations of  $m_R$-primary ideals in a Noetherian local ring $R$, generalizing the classical theory for $m_R$-primary ideals. We construct a real polynomial whose coefficients  give the mixed multiplicities. This polynomial  exists if and only if the dimension of the nilradical of the completion of $R$ is less than the dimension of $R$, which holds for instance if $R$ is excellent and reduced.  We show that many of the classical theorems for mixed multiplicities of $m_R$-primary ideals  hold for filtrations, including the famous Minkowski inequalities
 of Teissier,  and Rees and Sharp.
\end{abstract}

\maketitle

\section{Introduction}
The theory of mixed multiplicities of $m_R$-primary ideals in a Noetherian local ring $R$ with maximal ideal $m_R$,  was initiated by Bhattacharya \cite{Bh}, Rees  \cite{R} and Teissier  and Risler \cite{T1}. 
In this paper we extend mixed multiplicities to arbitrary; that is, not necessarily Noetherian, filtrations of $R$ by $m_R$-primary ideals and  explore their properties. 

An account of the history of the Minkowski inequalities of mixed multiplicities is given in \cite{GGV}.  This article explains the origins of this subject in Teissier's work on equisingularity \cite{T1}, and gives many important references.  A  survey of the theory of   mixed multiplicities of  ideals, with proofs,  can be found in  \cite[Chapter 17]{HS}. We refer to this book for references to many important results in this area. We particularly mention Sections 17.1 - 17.3  of \cite{HS} which develops the theory of joint reductions, including discussion of the results of  the papers \cite{R1} of Rees and \cite{S} of  Swanson. A further development is by Katz and Verma \cite{KV} , who generalized mixed multiplicities to ideals which are not all $m_R$-primary. Trung and Verma \cite{TV} computed mixed multiplicities of monomial ideals from mixed volumes of suitable polytopes.  Mixed multiplicities are used by Huh in the analysis of the coefficients of the chromatic polynomial of graph theory in \cite{H}.

The starting point of our investigation is the following theorem which  allows one to define the multiplicity of a filtration of $R$ by $m_R$-primary ideals. As the theorem shows, one must impose the condition that 
the dimension of the nilradical of the completion $\hat R$ of $R$ is less than the dimension of $R$. Let $\lambda(M)$ denote the length of an $R$-module $M$.

\begin{Theorem}(\cite[Theorem 1.1]{C2} and  \cite[Theorem 4.2]{C3})\label{TheoremI20}  Suppose that $R$ is a Noetherian local ring of dimension $d$, and  $N(\hat R)$ is the nilradical of the $m_R$-adic completion $\hat R$ of $R$.  Then   the limit 
\begin{equation}\label{I5}
\lim_{n\rightarrow\infty}\frac{\lambda(R/I_n)}{n^d}
\end{equation}
exists for any filtration  $\mathcal I=\{I_n\}$ of $R$ by $m_R$-primary ideals, if and only if $\dim N(\hat R)<d$.
\end{Theorem}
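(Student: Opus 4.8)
The plan is to reduce first to the case where $R$ is complete, then treat the two implications separately; the analytically irreducible domain case is the only serious input. I may and will assume $d\ge 1$ (for $d=0$ every filtration is eventually constant). First I would pass to $\hat R$. Since $m_{\hat R}=m_R\hat R$, every $m_{\hat R}$-primary ideal $K$ of $\hat R$ contains a power of $m_{\hat R}$, and $\hat R/m_R^k\hat R=R/m_R^k$ forces $K=(K\cap R)\hat R$; so $K$ is extended from $R$. Conversely $I\hat R$ is $m_{\hat R}$-primary whenever $I$ is $m_R$-primary, and $\lambda_R(R/I)=\lambda_{\hat R}(\hat R/I\hat R)$ by faithful flatness. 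Hence $I\mapsto I\hat R$ is a length-preserving bijection from filtrations of $R$ by $m_R$-primary ideals onto filtrations of $\hat R$ by $m_{\hat R}$-primary ideals, with inverse contraction (which plainly preserves the descending and multiplicative properties). So we may assume $R=\hat R$ is complete, and write $N=N(R)$.

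For the ``if'' direction, assume $\dim N<d$. For any filtration $\{I_n\}$ the inclusion $I_1^n\subseteq I_n$ gives $\lambda(M/I_nM)\le\lambda(M/I_1^nM)=O(n^{\dim M})$ for every finitely generated $R$-module $M$. Applied to $M=N$ together with $0\to N\to R\to R/N\to 0$, this yields $\lambda(R/I_n)=\lambda\big((R/N)/\overline{I_n}\big)+o(n^d)$, so we may further assume $R$ is reduced. Let $\bar R$ be the integral closure of $R$ in its total quotient ring; as $R$ is complete it is a Nagata ring, so $\bar R=\prod_i\bar R_i$ is module-finite over $R$ with each $\bar R_i$ a complete normal local domain of dimension $\le d$, and the conductor $\mathfrak c=(R:_R\bar R)$ contains a nonzerodivisor $c$. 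Tensoring $0\to R\to\bar R\to\bar R/R\to0$ with $R/I_n$ gives
$$\lambda(R/I_n)=\sum_i\lambda_R(\bar R_i/I_n\bar R_i)-\lambda\big((\bar R/R)/I_n(\bar R/R)\big)+\lambda(U_n),\qquad U_n:=(I_n\bar R\cap R)/I_n.$$
Now $\bar R/R$ is supported away from the minimal primes of $R$, so $\dim(\bar R/R)<d$ and the middle term is $o(n^d)$; and $\mathfrak c\,U_n=0$, so $U_n\subseteq(I_n:c)/I_n$ and $\lambda(U_n)\le\lambda(R/(c,I_n))\le\lambda\big((R/cR)/I_1^n(R/cR)\big)=O(n^{d-1})$ since $c$ being a nonzerodivisor forces $\dim R/cR\le d-1$. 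Finally $\lambda_R(\bar R_i/I_n\bar R_i)=[\bar R_i/m_{\bar R_i}:R/m_R]\cdot\lambda_{\bar R_i}(\bar R_i/I_n\bar R_i)$, and $\lambda_{\bar R_i}(\bar R_i/I_n\bar R_i)/n^d$ converges because $\bar R_i$ is analytically irreducible; this is \cite[Theorem 1.1]{C2}, proved via Okounkov bodies, and is the one substantial ingredient (the limit being $0$ when $\dim\bar R_i<d$). Adding up, $\lambda(R/I_n)/n^d$ converges.

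For the ``only if'' direction I would prove the contrapositive: if $R=\hat R$ is complete and $\dim N=d$, some filtration is bad. The hypothesis gives a minimal prime $Q$ of $R$ with $\dim R/Q=d$ and $R_Q$ not reduced; taking a nonzero element in the top nonzero power of the nilpotent ideal $N(R_Q)=NR_Q$ and clearing denominators (after multiplying by a suitable element of $R\setminus Q$) produces $x\in N$ with $x^2=0$ whose image in $R_Q$ is nonzero. Then $\operatorname{Ann}(x)\subseteq Q$, so $xR\cong R/\operatorname{Ann}(x)$ has dimension $d$ and the Hilbert--Samuel multiplicity $e:=e(m_R;xR)$ is positive. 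Fix a nondecreasing ``staircase'' sequence $c_n$ with $c_0=0$, $c_{n+1}-c_n\in\{0,1\}$, and $\liminf c_n/n<\limsup c_n/n$, and set $J_n:=m_R^n+x\,m_R^{c_n}$. Then $\{J_n\}$ is a filtration of $m_R$-primary ideals: $J_0=R$, it is descending, and $J_mJ_n=m_R^{m+n}+x\,m_R^{m+c_n}+x\,m_R^{c_m+n}\subseteq J_{m+n}$ because the $x^2$-term vanishes and $c_{m+n}\le\min(c_m+n,\,c_n+m)$. Using $xR\cong R/\operatorname{Ann}(x)$, the Artin--Rees lemma, and additivity of multiplicity along $0\to xR\to R\to R/xR\to0$, one computes
$$\lambda(R/J_n)=\lambda(R/m_R^n)-\lambda\big((m_R^n+x\,m_R^{c_n})/m_R^n\big)=\frac{e(m_R;R/xR)}{d!}\,n^d+\frac{e}{d!}\,c_n^d+O(n^{d-1}).$$
As $e>0$ and $(c_n/n)^d$ oscillates, $\liminf\lambda(R/J_n)/n^d<\limsup\lambda(R/J_n)/n^d$, so the limit \eqref{I5} fails to exist; pulling back to a general $R$ via the bijection above completes the argument. (The prototype is $R=k[[s,t]]/(s^2)$, $x=s$, $J_n=(t^n,\,s\,t^{c_n})$, where $\lambda(R/J_n)=n+c_n$.)

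The only genuinely hard ingredient is the analytically irreducible domain case \cite[Theorem 1.1]{C2}; within the reductions, the delicate points are the observation that $m_{\hat R}$-primary ideals of $\hat R$ are automatically extended from $R$ (which trivializes the reduction to the complete case), the nonzerodivisor-in-the-conductor estimate controlling the error $U_n$ in passing to the normalization, and the Hilbert--Samuel asymptotics showing the staircase filtration has oscillating normalized colengths.
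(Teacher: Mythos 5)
This theorem is stated in the paper only as a citation to \cite[Theorem 1.1]{C2} and \cite[Theorem 4.2]{C3}; the paper gives no internal proof to compare against. That said, your reductions mirror the structure the paper does use for its own Theorem~\ref{Theorem2} in Section~\ref{Reduc} (pass to the completion, kill the low-dimensional nilpotent part, split off the minimal primes), and the sole deep input — existence of the limit for an analytically irreducible local domain, via Okounkov bodies — is attributed correctly. Your argument is correct, with three remarks.

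First, a harmless redundancy: once $R$ is complete and reduced, each $R/P_i$ is already a complete local domain and hence analytically irreducible, so passing to the normalization $\bar R=\prod_i\bar R_i$ is unnecessary. The paper's Lemma~\ref{Lemma5} works with $S=\bigoplus_i R/P_i$ and a nonzerodivisor $x$ with $xS\subseteq R$, which is the same conductor-type estimate you run on $\bar R$; your route costs an appeal to Nagata/excellence of complete rings that one can avoid.

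Second, in the ``only if'' direction your computation of $\lambda(R/J_n)$ implicitly requires $xR\cap J_n=x\,m_R^{c_n}$, which by Artin--Rees needs $c_n\le n-k$ for the Artin--Rees constant $k$; since you also want $c_n\to\infty$, you should state explicitly that you choose the staircase so that, say, $c_n\le n/2$ and $c_n\to\infty$ with $\liminf c_n/n<\limsup c_n/n\le 1/2$. This is easy to arrange but should be said, as your constraints on $c_n$ as written permit $\limsup c_n/n=1$, where the identification $xR\cap J_n=x\,m_R^{c_n}$ can fail along the subsequence that matters.

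Third, the production of $x\in N$ with $x^2=0$ and $\operatorname{Ann}(x)\subseteq Q$ is fine, but ``clearing denominators'' deserves one more sentence: if $y=x'/s\in N(R_Q)^k$ is nonzero with $y^2=0$, one has $t(x')^2=0$ for some $t\notin Q$, and then $x:=tx'$ satisfies $x^2=0$, $x\in N$, and $x/1\ne0$ in $R_Q$ (since $t,s$ are units there), whence $\operatorname{Ann}_R(x)\subseteq Q$. With these small points tightened, the proof is complete and is, I believe, essentially the route taken in \cite{C3}.
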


The nilradical $N(R)$ of a $d$-dimensional ring $R$ is 
$$
N(R)=\{x\in R\mid x^n=0 \mbox{ for some positive integer $n$}\}.
$$
We have  that the dimension of the $R$-module $N(R)$ is $\dim N(R)=d$ if and only if there exists a minimal prime $P$ of $R$ such that $\dim R/P =d$ and $R_P$ is not reduced.

The problem of existence of such limits (\ref{I5}) has also been considered by Ein, Lazarsfeld and Smith \cite{ELS} and Musta\c{t}\u{a} \cite{Mus}.
In the case when the ring $R$ is a domain and is essentially of finite type over an algebraically closed field $k$ with $R/m_R=k$, Lazarsfeld and Musta\c{t}\u{a} \cite{LM} showed that
the limit exists for all filtrations  of $R$ by $m_R$-primary ideals . All of these assumptions are necessary in their proof.  

The following is a very simple example of a filtration of $m_R$-primary ideals such that the above limit is not rational. Let $k$ be a field and $R=k[[x]]$ be a power series ring over $k$. Let $I_n =(x^{\lceil n\sqrt{2}\rceil})$   
where $\lceil \alpha\rceil$ is the round up of a real number $\alpha$ (the smallest integer which is greater than or equal to $\alpha$).  Then $\{I_n\}$ is a graded family of $m_R$-primary ideals such that 
$$
\lim_{n\rightarrow\infty}\frac{\lambda(R/I_n)}{n} =\sqrt{2}
$$ 
is an irrational number. 

There are also irrational examples determined by the valuation ideals of a  discrete valuation. In Example 6 of \cite{CS} an example is given of a normal 3 dimensional  local ring $R$ which is essentially of finite type over a field of arbitrary characteristic and a divisorial valuation $\nu$ on the quotient field of $R$ which dominates $R$ such that the filtration of $m_R$-primary ideals $\{I_n\}$
defined by 
$$
I_n=\{f\in R\mid \nu(f)\ge n\}
$$
satisfies that the limit 
$$
\lim_{n\rightarrow\infty}\frac{\lambda(R/I_n)}{n^3} 
$$ 
is irrational.

Non-Noetherian filtrations ($\oplus_{n\ge 0}I_n$ not Noetherian) occur naturally in commutative algebra. The filtration of ideals determined by a divisorial valuation which dominates a normal local ring is generally not Noetherian. For instance, the condition that a two dimensional normal local ring $R$ satisfies the condition that this filtration is Noetherian for all  divisorial valuations dominating $R$ is the condition (N) of Muhly and Sakuma \cite{MS}. It is proven in \cite{C4} that a complete normal local ring of dimension two satisfies condition (N) if and only if its divisor class group is a torsion group.

The existence of mixed multiplicities of (not necessarily Noetherian) filtrations $\mathcal I(1)=\{I(1)_i\},\ldots,\mathcal I(r)=\{I(r)_i\}$ of $m_R$-primary ideals is established   in Theorem \ref{Theorem2} of this paper. Let $M$ be a finitely generated $R$-module. In Theorem \ref{Theorem2} and Theorem \ref{Cor2}, it is shown that the function
\begin{equation}\label{M2}
P(n_1,\ldots,n_r)=\lim_{m\rightarrow \infty}\frac{\lambda(M/I(1)_{mn_1}\cdots I(r)_{mn_r}M)}{m^d}
\end{equation}
is equal to a homogeneous polynomial $G(n_1,\ldots,n_r)$ of total degree $d$ with real coefficients for all  $n_1,\ldots,n_r\in\NN$.  This limit always exists if and only if the dimension of the nilradial $N(\hat R)$ of the $m_R$-adic completion of $R$ is less than $d=\dim R$, as follows from Theorem \ref{TheoremI20} stated above. We must thus impose the condition that $\dim N(\hat R)<d$. This condition holds if $R$ is analytically unramified; that is, $\hat R$ is reduced.
We may then define the mixed multiplicities of $M$ from the coefficients of $G$, generalizing the definition of mixed multiplicities for $m_R$-primary ideals. Specifically,   
 we write 
$$
G(n_1,\ldots,n_r)=\sum_{d_1+\cdots +d_r=d}\frac{1}{d_1!\cdots d_r!}e_R(\mathcal I(1)^{[d_1]},\ldots, \mathcal I(r)^{[d_r]};M)n_1^{d_1}\cdots n_r^{d_r}.
$$
We say that $e_R(\mathcal I(1)^{[d_1]},\ldots,\mathcal I(r)^{[d_r]};M)$ is the mixed multiplicity of $M$ of type $(d_1,\ldots,d_r)$ with respect to the filtrations $\mathcal I(1),\ldots,\mathcal I(r)$.
Here we are using the notation 
$$
e_R(\mathcal I(1)^{[d_1]},\ldots, \mathcal I(r)^{[d_r]};M)
$$
 for the coefficients of $G(n_1,\ldots,n_r)$ to be consistent with the classical notation for mixed multiplicities of $M$ for $m_R$-primary ideals from \cite{T1}. The mixed multiplicity of $M$ of type $(d_1,\ldots,d_r)$ with respect to $m_R$-primary ideals $I_1,\ldots,I_r$, denoted by $e_R(I_1^{[d_1]},\ldots,I_r^{[d_r]};M)$ (\cite{T1}, \cite[Definition 17.4.3]{HS}) is equal to the mixed multiplicity $e_R(\mathcal I(1)^{[d_1]},\ldots,\mathcal I(r)^{[d_r]};M)$, where the filtrations $\mathcal I(1),\ldots,\mathcal I(r)$ are defined by $\mathcal I(1)=\{I_1^i\}_{i\in \NN}, \ldots,\mathcal I(r)=\{I_r^i\}_{i\in \NN}$.

We write the multiplicity $e_R(\mathcal I;M)=e_R(\mathcal I^{[d]};M)$ if $r=1$, and $\mathcal I=\{I_i\}$ is a filtration of  $R$ by $m_R$-primary ideals. We have that
$$
e_R(\mathcal I;M)=\lim_{m\rightarrow \infty}d!\frac{\lambda(M/I_mM)}{m^d}.
$$
We have by Proposition \ref{Meq}, that  for $1\le i\le r$,
$$
e_R(\mathcal I(i);M)=e_R(\mathcal I(1)^{[0]},\ldots,\mathcal I(i-1)^{[0]},\mathcal I(i)^{[d]},\mathcal I(i+1)^{[0]},\ldots,\mathcal I(r)^{[0]};M),
$$
generalizing the equality for $m_R$-primary ideals by Rees in  \cite[Lemma 2.4]{R}.

We show that many of the classical properties of mixed multiplicities for $m_R$-primary ideals continue to hold for filtrations, including the famous ``Minkowski inequalities'', proven in Theorem \ref{Theorem12}, and stated below. The Minkowski inequalities were formulated and proven for  $m_R$-primary ideals by Teissier \cite{T1}, \cite{T2} and proven in full generality, for Noetherian local rings,  by Rees and Sharp \cite{RS}.
We prove the strong inequality 1) from which the inequalities 2), 3) and 4) follow.   The fourth inequality  4)  was proven for filtrations  of $R$ by $m_R$-primary ideals in a regular local ring with algebraically closed residue field by Musta\c{t}\u{a} (\cite[Corollary 1.9]{Mus}) and more recently by Kaveh and Khovanskii (\cite[Corollary 7.14]{KK1}). The inequality 4) was proven with our assumption that $\dim N(\hat R)<d$ in \cite[Theorem 3.1]{C3}.
Inequalities 2) - 4) can be deduced directly from inequality 1), as explained in \cite{T1}, \cite{T2}, \cite{RS} and \cite[Corollary 17.7.3]{HS}  .

\begin{Theorem}(Minkowski Inequalities)  Suppose that $R$ is a Noetherian $d$-dimensional  local ring with $\dim N(\hat R)<d$, $M$ is a finitely generated $R$-module and $\mathcal I(1)=\{I(1)_j\}$ and $\mathcal I(2)=\{I(2)_j\}$ are filtrations of $R$ by $m_R$-primary ideals. Then 
\begin{enumerate}
\item[1)] $e_R(\mathcal I(1)^{[i]},\mathcal I(2)^{[d-i]};M)^2\le e_R(\mathcal I(1)^{[i+1]},\mathcal I(2)^{[d-i-1]};M)e_R(\mathcal I(1)^{[i-1]},\mathcal I(2)^{[d-i+1]};M)$ 

for $1\le i\le d-1$.
\item[2)]  For $0\le i\le d$, 
$$
e_R(\mathcal I(1)^{[i]},\mathcal I(2)^{[d-i]};M)e_R(\mathcal I(1)^{[d-i]},\mathcal I(2)^{[i]};M)\le e_R(\mathcal I(1);M)e_R(\mathcal I(2);M),
$$
\item[3)] For $0\le i\le d$, $e_R(\mathcal I(1)^{[d-i]},\mathcal I(2)^{[i]};M)^d\le e_R(\mathcal I(1);M)^{d-i}e_R(\mathcal I(2);M)^i$ and
\item[4)]  $e_R(\mathcal I(1)\mathcal I(2));M)^{\frac{1}{d}}\le e_R(\mathcal I(1);M)^{\frac{1}{d}}+e_R(\mathcal I(2);M)^{\frac{1}{d}}$, 

where $\mathcal I(1)\mathcal I(2)=\{I(1)_jI(2)_j\}$.
\end{enumerate}
\end{Theorem}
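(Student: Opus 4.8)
The plan is to prove the strong inequality 1) by approximating the filtrations $\mathcal I(1)$ and $\mathcal I(2)$ by the $m_R$-primary ideals $I(1)_m$ and $I(2)_m$, applying the classical Minkowski inequality of Teissier and of Rees--Sharp for $m_R$-primary ideals, and letting $m\to\infty$; the inequalities 2)--4) will then follow from 1) by the classical formal argument. First I would reduce to the case $\dim M=d$. If $\dim M<d$, choose $c$ with $m_R^c\subseteq I(1)_1\sect I(2)_1$; then $m_R^{c(n_1+n_2)m}\subseteq I(1)_1^{mn_1}I(2)_1^{mn_2}\subseteq I(1)_{mn_1}I(2)_{mn_2}$, so $\lambda(M/I(1)_{mn_1}I(2)_{mn_2}M)\le\lambda(M/m_R^{c(n_1+n_2)m}M)=O(m^{\dim M})$, the polynomial $G(n_1,n_2)$ of Theorem \ref{Theorem2} and Theorem \ref{Cor2} vanishes identically, all mixed multiplicities occurring in 1)--4) are zero, and there is nothing to prove. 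So assume $\dim M=d$ and write $e_i=e_R(\mathcal I(1)^{[i]},\mathcal I(2)^{[d-i]};M)$, so that $G(n_1,n_2)=\sum_{i=0}^d\frac{e_i}{i!(d-i)!}n_1^in_2^{d-i}$.

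To prove 1), I would fix $m\ge1$ and put $J_1=I(1)_m$, $J_2=I(2)_m$. These are $m_R$-primary ideals, and the Bhattacharya polynomial of $(J_1,J_2)$ on $M$ always exists, so the classical Minkowski inequality for $m_R$-primary ideals acting on a finitely generated module (\cite{RS}; see \cite[Chapter 17]{HS}) applies with no hypothesis on $R$ and gives, for $1\le i\le d-1$,
\[
e_R(J_1^{[i]},J_2^{[d-i]};M)^2\le e_R(J_1^{[i+1]},J_2^{[d-i-1]};M)\,e_R(J_1^{[i-1]},J_2^{[d-i+1]};M).
\]
Dividing by $m^{2d}$ and letting $m\to\infty$ yields 1), provided one knows the identity
\begin{equation*}
e_R(\mathcal I(1)^{[i]},\mathcal I(2)^{[d-i]};M)=\lim_{m\to\infty}\frac{e_R((I(1)_m)^{[i]},(I(2)_m)^{[d-i]};M)}{m^d} \tag{$\ast$}
\end{equation*}
for each $i$ with $0\le i\le d$.

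The identity $(\ast)$ is the heart of the argument, and I expect it to be the main obstacle. One inequality is easy: the containments $(I(j)_m)^n\subseteq I(j)_{mn}$ give $\lambda(M/I(1)_{mn_1}I(2)_{mn_2}M)\le\lambda(M/(I(1)_m)^{n_1}(I(2)_m)^{n_2}M)$, whence, on replacing $n_k$ by $kn_k$, dividing by $(mk)^d$ and letting $k\to\infty$, the polynomial inequality $G(n_1,n_2)\le m^{-d}H_m(n_1,n_2)$, where $H_m$ is the leading form of the Bhattacharya polynomial of $(I(1)_m,I(2)_m)$ on $M$. The matching lower bound, and with it the coefficientwise convergence in $(\ast)$, cannot be deduced from an inequality of homogeneous polynomials at lattice points alone; this is where the hypothesis $\dim N(\hat R)<d$ enters. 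One reduces the limit defining $G$ to the case where $R$ is a complete local domain (which is exactly what $\dim N(\hat R)<d$ makes possible, cf. Theorem \ref{TheoremI20}), and then a Fekete-type subadditivity estimate shows that the normalized mixed multiplicities $m^{-d}e_R((I(1)_m)^{[i]},(I(2)_m)^{[d-i]};M)$ converge, necessarily to the corresponding coefficient of $G$. This is the content built into the proof of Theorem \ref{Theorem2} and Theorem \ref{Cor2}, from which $(\ast)$ may be quoted.

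Finally, I would deduce 2)--4) from 1) by the classical argument (cf. \cite[Corollary 17.7.3]{HS}, \cite{T1}). Inequality 1) says the sequence $e_0,e_1,\dots,e_d$ is logarithmically convex. By Proposition \ref{Meq} (equivalently, from the values $G(n_1,0)$ and $G(0,n_2)$) we have $e_d=e_R(\mathcal I(1);M)$ and $e_0=e_R(\mathcal I(2);M)$; and since $\mathcal I(1)\mathcal I(2)=\{I(1)_jI(2)_j\}$ is again a filtration of $m_R$-primary ideals with $e_R(\mathcal I(1)\mathcal I(2);M)=d!\,G(1,1)$, evaluating $G$ at $(1,1)$ gives $e_R(\mathcal I(1)\mathcal I(2);M)=\sum_{i=0}^d\binom{d}{i}e_i$. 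Logarithmic convexity together with these endpoint values gives $e_i\le e_d^{i/d}e_0^{(d-i)/d}$ for $0\le i\le d$, which (on raising to the $d$-th power and reindexing) is 3); multiplying the estimates for $e_i$ and $e_{d-i}$ gives 2); and summing the inequalities $\binom{d}{i}e_i\le\binom{d}{i}(e_d^{1/d})^i(e_0^{1/d})^{d-i}$ over $i$ and taking $d$-th roots gives $e_R(\mathcal I(1)\mathcal I(2);M)^{1/d}\le e_R(\mathcal I(1);M)^{1/d}+e_R(\mathcal I(2);M)^{1/d}$, which is 4).
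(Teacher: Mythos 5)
Your overall structure---establish the log-concavity inequality 1), then deduce 2)--4) by the standard formal argument from the endpoint values $e_d=e_R(\mathcal I(1);M)$, $e_0=e_R(\mathcal I(2);M)$, and the binomial expansion of $e_R(\mathcal I(1)\mathcal I(2);M)$---matches the paper's strategy, and the deduction of 2)--4) from 1) is correct. The reduction to $\dim M=d$ is also fine (the paper reduces further, following the proof of Theorem \ref{Theorem2}, to $R$ analytically irreducible and $M=R$). The problem is the convergence identity $(\ast)$, which you rightly call the heart of the argument but do not actually prove: it cannot simply be ``quoted'' from Theorem \ref{Theorem2} or Theorem \ref{Cor2}. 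Those results identify the limit $P(n_1,n_2)=\lim_m\lambda(M/I(1)_{mn_1}I(2)_{mn_2}M)/m^d$ with the values of a homogeneous polynomial $G$; they say nothing about the Bhattacharya mixed multiplicities of the single ideals $I(1)_m,I(2)_m$. Your easy containment gives $G(n_1,n_2)\le m^{-d}H_m(n_1,n_2)$, and a Fekete subadditivity estimate shows that $m^{-d}H_m(n_1,n_2)$ has a limit, but combining these only yields $G(n_1,n_2)\le\lim_m m^{-d}H_m(n_1,n_2)$. The matching upper bound is the substantive point: it is not a consequence of the stated theorems, and supplying it seems to require precisely the apparatus the paper deploys---the Noetherian truncated filtrations $\mathcal I_a(j)$ of Definition \ref{trunc}, Lemma \ref{Lemma0} (which equates their mixed multiplicities with $f_a^{-d}$ times the classical ones of $I_a(j)_{f_a}$), the containment $I_a(j)_{f_a}\subseteq I(j)_{f_a}$ (which flips the inequality of multiplicities the right way), and Proposition \ref{Prop6}.

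The paper sidesteps $(\ast)$ entirely. After the reduction to $R$ analytically irreducible and $M=R$, it first proves the Minkowski inequalities for the truncations $\mathcal I_a(1),\mathcal I_a(2)$: by Lemma \ref{Lemma0} and Lemma \ref{Prop1}, $e_R(\mathcal I_a(1)^{[d_1]},\mathcal I_a(2)^{[d_2]};R)=f_a^{-d}\,e_R\bigl(I_a(1)_{f_a}^{[d_1]},I_a(2)_{f_a}^{[d_2]};R\bigr)$, so the Rees--Sharp inequalities for $m_R$-primary ideals transfer directly; then Proposition \ref{Prop6} passes to the limit $a\to\infty$. Approximating by the Noetherian truncations, for which the convergence of mixed multiplicities (Proposition \ref{Prop6}) is already established, is better adapted to the machinery than approximating by the single ideals $I(j)_m$, for which the analogous convergence $(\ast)$ would need to be proved from scratch. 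As written, your proposal has a real gap at $(\ast)$; closing it would, in effect, re-derive the paper's truncation argument, so you should either carry that out explicitly or simply work with the truncations from the start.
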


 In Section \ref{Multi}, we  give an example showing that  Theorem \ref{Theorem2} does not have a good extension to arbitrary multigraded non Noetherian filtrations $\mathcal I=\{I_{n_1,\ldots,n_r}\}$ of $m_R$-primary ideals, even in a power series ring in one variable over a field. In our example ($d=1$)
 $$
 P(n_1,n_2)=\lim_{m\rightarrow \infty}\frac{\lambda(R/I_{mn_1,mn_2})}{m}=\lceil \sqrt{n_1^2+n_2^2}\rceil
$$ 
 for $n_1,n_2\in \NN$, where $\lceil x\rceil$ is the round up of a real number $x$. The function $P(n_1,n_2)$ is  far from polynomial like. 
 
 We will   show however that the function $P(n_1,\ldots,n_r)$ is polynomial like in an important situation. 
 We  show  that the multigraded filtration  of $m_R$-primary ideals   measuring vanishing along the  exceptional divisors of a resolution of singularities of an excellent, normal, two dimensional local ring is such that 
  the function
 $$
 P(n_1,\ldots,n_r)=\lim_{m\rightarrow \infty}\frac{\lambda(R/I_{mn_1,\ldots,mn_r})}{m^2}
$$ 
is  a piecewise polynomial  function (a polynomial with rational coefficients when restricted to  a member of an abstract complex of polyhedral sets whose union is $\QQ_{\ge 0}$). 
The function $P(n_1,\ldots,n_r)$ is in fact  an intersection product on the resolution of singularities.
These formulas hold, even though the  filtration $\{I_{n_1,\ldots,n_r}\}$ is generally not Noetherian.

The first step in the construction of mixed multiplicities for $m_R$-primary filtrations is to construct them for Noetherian filtrations.  In this case the associated multigraded Hilbert function  is a quasi polynomial, whose highest degree terms are constant, rational numbers, as we show in Proposition \ref{Prop3}.  We next restrict in Section \ref{SecVol} to the case $M=R$ and assume that $R$ is analytically irreducible. Using methods of volumes of Newton-Okounkov bodies adapted to our situation, we show in Proposition \ref{Prop2} and Corollary \ref{Cor1} that the coefficients of the polynomials $P_a(n_1,\ldots,n_r)$ of (\ref{M2}) for successive Noetherian approximations $\mathcal I_a(1),\ldots,\mathcal I_a(r)$ of $\mathcal I(1),\ldots, \mathcal I(r)$, all have a limit as $a\rightarrow\infty$. We then define $G(x_1,\ldots,x_n)$ to be the real polynomial with these limit coefficients, and show in Theorem \ref{Theorem1} that for $n_1,\ldots,n_r\in \ZZ_+$, $G(n_1,\ldots,n_r)$ is the function $P(n_1,\ldots,n_r)$ of (\ref{M2}) for the filtrations $\mathcal I(1),\ldots,\mathcal I(r)$.
In Section \ref{Reduc}, we obtain the reductions necessary to prove Theorem \ref{Theorem2}, allowing us to define mixed multiplicities for filtrations of $m_R$-primary ideals in Section \ref{MMult}.

We will denote the nonnegative integers by $\NN$ and the positive integers by $\ZZ_+$.  We will denote the set of nonnegative rational numbers  by $\QQ_{\ge 0}$  and the positive rational numbers by $\QQ_+$. 
We will denote the set of nonnegative real numbers by $\RR_{\ge0}$.

The maximal ideal of a local ring $R$ will be denoted by $m_R$. The quotient field of a domain $R$ will be denoted by ${\rm Q}(R)$. We will denote the length of an $R$-module $M$ by $\lambda_R(M)$ or $\lambda(M)$ if the ring $R$ is clear from context.

A filtration $\mathcal I=\{I_n\}_{n\in\NN}$ of ideals on a ring $R$ is a descending chain
$$
R=I_0\supset I_1\supset I_2\supset \cdots
$$
of ideals such that $I_iI_j\subset I_{i+j}$ for all $i,j\in \NN$. A filtration $\mathcal I=\{I_n\}$ of ideals on a local ring $R$ is a filtration of $R$ by $m_R$-primary ideals if $I_j$ is $m_R$-primary for $j\ge 1$.
A filtration $\mathcal I=\{I_n\}_{n\in\NN}$ of ideals on a ring $R$ is said to be Noetherian if $\bigoplus_{n\ge 0}I_n$ is a finitely generated $R$-algebra.

\section{Polynomials,  Quasi Polynomials and Multiplicities I}\label{Sec2}
A map $\sigma:\NN^r\rightarrow \QQ$ is said to be periodic  if there exists $\alpha\in \NN$ such that
$$
\sigma(n_1,n_2,\ldots,n_i+\alpha,\ldots,n_r)=\sigma(n_1,n_2,\ldots,n_i,\ldots,n_r)
$$ 
for all $(n_1,\ldots,n_r)\in \NN^r$ and $1\le i\le r$. If this condition holds, then $\alpha$ is said to be a period of $\sigma$.

In this section we suppose that  $(R, m_R)$ is a Noetherian local ring, $M$ is a finitely generated $R$-module and $\{I(j)_i\}$ are Noetherian filtrations of $R$ by $ m_R$-primary ideals for all $1\leq j\leq r.$ Then for all $1\leq j\leq r,$ there exists an integer $\alpha\geq 1$ such that
$R_j^{(\alpha)}=\bigoplus\limits_{n\geq 0} I(j)_{\alpha n}$ are Noetherian standard $\mathbb N$-graded rings (by \cite{Bou}[Proposition 3, Section 1.3, Chapter III]). Therefore $$S=\bigoplus\limits_{n_1,\ldots,n_r\geq 0}I(1)_{\alpha n_1}\cdots I(r)_{\alpha n_r}$$ is a Noetherian standard $\mathbb N^r$-graded ring where $S_{(n_1,\ldots,n_r)}=I(1)_{\alpha n_1}\cdots I(r)_{\alpha n_r}.$ For all $1\leq j\leq r,$ consider the ideals \begin{center} $K_j=\bigoplus\limits_{n_1,\ldots,n_r\geq 0}I(1)_{\alpha n_1}\cdots I(j-1)_{\alpha n_{j-1}}I(j)_{\alpha n_j+1}I(j+1)_{\alpha n_{j+1}}\cdots I(r)_{\alpha n_r}$\end{center} of $S$ where $(K_j) _{(n_1,\ldots,n_r)}=I(1)_{\alpha n_1}\cdots I(j-1)_{\alpha n_{j-1}}I(j)_{\alpha n_j+1}I(j+1)_{\alpha n_{j+1}}\cdots I(r)_{\alpha n_r}.$ Then for all $1\leq j\leq r,$
\begin{eqnarray*} G_j^{(\alpha )}&:=&S/K_j\\&=&\bigoplus\limits_{n_1,\ldots,n_r\geq 0}\frac{I(1)_{\alpha n_1}\cdots I(j)_{\alpha n_j}\cdots I(r)_{\alpha n_r}}{I(1)_{\alpha n_1}\cdots I(j-1)_{\alpha n_{j-1}}I(j)_{\alpha n_j+1}I(j+1)_{\alpha n_{j+1}}\cdots I(r)_{\alpha n_r}}\end{eqnarray*} are standard graded algebras over $R/I(j)_1.$
\\For all $1\leq j\leq r$ and integers $0\leq b_j\leq \alpha -1,$ we have finitely generated $G_j^{(\alpha )}$-modules

$$
 \begin{array}{l}G_j^{(b_1,\ldots,b_r)}(M)\\
 \,\,\,\,\,\,\,\,\,\,:=\bigoplus\limits_{n_1,\ldots,n_r\geq 0}\frac{I(1)_{\alpha n_1+b_1}\cdots I(j)_{\alpha n_j+b_j}\cdots I(r)_{\alpha n_r+b_r}M}{I(1)_{\alpha n_1+b_1}\cdots I(j-1)_{\alpha n_{j-1}+b_{j-1}}I(j)_{\alpha n_j+b_j+1}I(j+1)_{\alpha n_{j+1}+b_{j+1}}\cdots I(r)_{\alpha n_r+b_r}M}.
 \end{array}
 $$
By \cite{HHRT}[Theorem 4.1], for all $1\leq j\leq r$ and integers $0\leq b_j\leq \alpha-1,$ there exist polynomials $P_{(b_1,\ldots,b_r)}^{(j)}(X_1,\ldots,X_r)\in\mathbb Q[X_1,\ldots,X_r]$ and an integer $m\in\mathbb Z_+$ such that 
for all $n_1,\ldots,n_r\geq m,$ we have 
$$
\begin{array}{l} H_{(b_1,\ldots,b_r)}^{(j)}(n_1,\ldots,n_r)\\
:=\lambda {\Big(\frac{I(1)_{\alpha n_1+b_1}\cdots I(j)_{\alpha n_j+b_j}\cdots I(r)_{\alpha n_r+b_r}M}{I(1)_{\alpha n_1+b_1}\cdots I(j-1)_{\alpha n_{j-1}+b_{j-1}}I(j)_{\alpha n_j+b_j+1}I(j+1)_{\alpha n_{j+1}+b_{j+1}}\cdots I(r)_{\alpha n_r+b_r}M}\Big)}\\
=P_{(b_1,\ldots,b_r)}^{(j)}(n_1,\ldots,n_r).
\end{array}
$$

\begin{Proposition}{\label{poly}}
	Let $Q_1(X_1,\ldots,X_r),\ldots,Q_k(X_1,\ldots,X_r)\in {\mathbb Q} [X_1,\ldots,X_r]$ be numerical polynomials and $1\leq l$	be a fixed integer. Then for any integer $t\geq 1$ and $j\in\{1,\ldots,r\},$ 
	\begin{center}$\sum\limits_{n=0}^{t}{\sum\limits_{m=1}^{k}Q_m(n_1,\ldots,n_{j-1},l+n,n_{j+1},\ldots,n_r)}$\end{center} is a polynomial $Q(n_1,\ldots,n_{j-1},t,n_{j+1},\ldots,n_r)$ in $n_1,\ldots,n_{j-1},t,n_{j+1},\ldots,n_r$ with coefficients in $\mathbb Q.$
\end{Proposition}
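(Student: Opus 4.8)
The plan is to reduce the double sum to a one-variable summation operator acting on a basis of numerical polynomials, for which there is an explicit closed form. Since summation over $n$ commutes with the finite sum over $m$, it suffices to prove the statement when $k=1$, i.e. for a single numerical polynomial $Q_1(X_1,\ldots,X_r)$. Recall that the numerical polynomials in $r$ variables form a free $\ZZ$-module with basis the products of binomial coefficients $\binom{X_1}{i_1}\cdots\binom{X_r}{i_r}$, $(i_1,\ldots,i_r)\in\NN^r$; so write $Q_1=\sum_{\vec i}a_{\vec i}\binom{X_1}{i_1}\cdots\binom{X_r}{i_r}$ with $a_{\vec i}\in\ZZ$, only finitely many nonzero. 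Substituting $X_j=l+n$, summing over $n$, and pulling the factors $\binom{X_s}{i_s}$ with $s\ne j$ out of the sum over $n$, one is reduced to showing that for each fixed $i\in\NN$ and each fixed integer $l\ge 1$ the function $t\mapsto\sum_{n=0}^{t}\binom{l+n}{i}$ is a polynomial in $t$ with rational coefficients.

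This is the only delicate point, and it follows from the hockey-stick identity $\sum_{p=0}^{N}\binom{p}{q}=\binom{N+1}{q+1}$ after reindexing $p=l+n$:
\[
\sum_{n=0}^{t}\binom{l+n}{i}=\sum_{p=l}^{l+t}\binom{p}{i}=\binom{l+t+1}{i+1}-\binom{l}{i+1},
\]
where $\binom{l+t+1}{i+1}=\frac{(l+t+1)(l+t)\cdots(l+t-i+1)}{(i+1)!}$ is a polynomial in $t$ of degree $i+1$ with rational coefficients, while $\binom{l}{i+1}$ is a constant. Plugging back in,
\[
\sum_{n=0}^{t}\sum_{m=1}^{k}Q_m(n_1,\ldots,l+n,\ldots,n_r)=\sum_{\vec i}a_{\vec i}\Big(\prod_{s\ne j}\binom{n_s}{i_s}\Big)\Big(\binom{l+t+1}{i_j+1}-\binom{l}{i_j+1}\Big),
\]
which is a polynomial in $n_1,\ldots,n_{j-1},t,n_{j+1},\ldots,n_r$ with coefficients in $\QQ$ (indeed a numerical one).

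The thing to watch is merely the shift of the summation variable by the constant $l$ inside $\binom{l+n}{i}$, so that one reindexes $p=l+n$ before invoking the hockey-stick sum rather than applying it directly; there is no serious obstacle beyond this bookkeeping. An alternative route that avoids binomial coefficients altogether: expand $Q_1$ as a polynomial in $X_j$ with coefficients that are polynomials in the remaining variables, and use that $\sum_{n=0}^{t}(l+n)^e=\sum_{p=0}^{l+t}p^e-\sum_{p=0}^{l-1}p^e$ is, for every $e\in\NN$, a polynomial in $t$ with rational coefficients by Faulhaber's formula; multiplying by the coefficient polynomials and summing over $e$ then gives the claim directly.
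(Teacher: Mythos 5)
Your proof is correct and follows essentially the same route as the paper: expand the $Q_m$ in a binomial-coefficient basis for numerical polynomials, pull out the factors not involving the $j$-th variable, and evaluate the inner sum by the telescoping (hockey-stick) identity, getting $\binom{l+t+1}{i+1}-\binom{l}{i+1}$. The paper uses the basis $\binom{X_s+\beta_s}{\beta_s}$ rather than $\binom{X_s}{i_s}$, but this is an inessential normalization; the key identity and the overall structure are the same.
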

\begin{proof}
	Fix $j.$ For all $m\in\{1,\ldots,k\},$ we have 
	$$
	Q_m(n_1,\ldots,n_r)=\displaystyle\sum\limits_{\substack{\beta=({\beta}_1,\ldots{\beta}_r)\in{\mathbb N}^r \\ |\beta|\leq d_m}}{e_{\beta}^m}\binom{n_1+{{\beta}_1}}{{\beta}_1}\cdots\binom{{n_r}+{{\beta}_r}}{{\beta}_r}$$
	where $d_m$ is the total degree of $Q_m.$ Then $$\tilde{Q}(n_1,\ldots,n_r)=\sum\limits_{m=1}^{k}Q_m(n_1,\ldots,n_r)=\displaystyle\sum\limits_{\substack{\beta=({\beta}_1,\ldots{\beta}_r)\in{\mathbb N}^r \\ |\beta|\leq d}}{u_{\beta}}\binom{n_1+{{\beta}_1}}{{\beta}_1}\cdots\binom{{n_r}+{{\beta}_r}}{{\beta}_r}$$ is a numerical polynomial of total degree less than or equal to $d$ with $d=\max\{d_1,\ldots,d_k\}$ and $u_{\beta}=\sum\limits_{m=1} ^ke_{\beta}^m$ for all $\beta=({\beta}_1,\ldots{\beta}_r)\in{\mathbb N}^r$ with $|\beta|\leq d$ (note that $e_{\beta}^m=0$ if $|\beta|>d_m$). 
\\Let $A_{\beta}:={u_{\beta}}\binom{n_1+{{\beta}_1}}{{\beta}_1}\cdots\binom{n_{j-1}+{{\beta}_{j-1}}}{{\beta}_{j-1}}\binom{n_{j+1}+{{\beta}_{j+1}}}{{\beta}_{j+1}} \cdots\binom{{n_r}+{{\beta}_r}}{{\beta}_r}.$ Then for any integer $t\geq 1,$ 
	
	$$
	\begin{array}{l}\sum\limits_{n=0}^{t}{\sum\limits_{m=1}^{k}Q_m(n_1,\ldots,n_{j-1},l+n,n_{j+1},\ldots,n_r)}\\
	= \sum\limits_{n=0}^{t}\tilde{Q}(n_1,\ldots,n_{j-1},l+n,n_{j+1},\ldots,n_r)\\
	=\sum\limits_{n=0}^{t}{\displaystyle\sum\limits_{\substack{\beta=({\beta}_1,\ldots{\beta}_r)\in{\mathbb N}^r \\ |\beta|\leq d}}}A_{\beta}\binom{l+n+{{\beta}_j}}{{\beta}_j}
		\\
		= \displaystyle\sum\limits_{\substack{\beta=({\beta}_1,\ldots{\beta}_r)\in{\mathbb N}^r \\ |\beta|\leq d}}A_{\beta}\Big[\sum\limits_{n=0}^{t}\binom{l+n+{{\beta}_j}}{{\beta}_j}\Big]
		\\
		=\displaystyle\sum\limits_{\substack{\beta=({\beta}_1,\ldots{\beta}_r)\in{\mathbb N}^r\\ |\beta|\leq d}}A_{\beta}\big[\binom{t+l+{\beta}_j+1}{{\beta}_j+1}-\binom{l+{\beta}_j}{{\beta}_j+1}\big]\\=Q(n_1,\ldots,n_{j-1},t,n_{j+1},\ldots,n_r).\end{array}
		$$
		
\end{proof}	
For all 
$1\leq j\leq r$ and integers $0\leq b_j\leq \alpha -1$,
we define 

$$
(\alpha; b_1,\ldots,b_r)=\{(n_1,\ldots,n_r)\in\mathbb N^r: n_j\equiv b_j(mod\mbox{ } \alpha )\mbox{ for all }1\leq j\leq r\}.
$$
\begin{Proposition}\label{Prop3'}
	Suppose that $R$ is a  Noetherian  local ring, $M$ is a finitely generated $R$-module and $\mathcal I(1)=\{I(1)_i\},\ldots,\mathcal I(r)=\{I(r)_i\}$ are Noetherian  filtrations of $R$ by $m_R$-primary ideals. Consider the function 
	$
	\lambda(M/I(1)_{n_1}\cdots I(r)_{n_r}M)
	$
	of $n_1,\ldots,n_r\in\NN$ where $\lambda$ is the length as an $R$-module.  
	Then  there exist  $c\in \ZZ_+$, $s\in \NN$   and periodic functions $\sigma_{i_1,\ldots,i_r}(n_1,\ldots,n_r)$ such that whenever $n_1,\ldots,n_r\ge c$, we have that
	$$
	\lambda(M/I(1)_{n_1}\cdots I(r)_{n_r}M)=\sum_{i_1+\cdots+i_r\le s}\sigma_{i_1,\ldots,i_r}(n_1,\ldots,n_r)n_1^{i_1}n_2^{i_2}\cdots n_r^{i_r}.
	$$
\end{Proposition}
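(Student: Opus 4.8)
The plan is to fix the residues of $n_1,\dots,n_r$ modulo $\alpha$ and prove that on each residue class the length function in question agrees, for all sufficiently large $n_1,\dots,n_r$, with a polynomial having rational coefficients; reassembling these finitely many polynomials then produces the periodic coefficient functions $\sigma_{i_1,\dots,i_r}$ asserted in the statement.

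To carry this out, fix $b=(b_1,\dots,b_r)$ with $0\le b_j\le\alpha-1$, choose a large integer $L$ (to be specified at the end), and set $\ell_j=\alpha L+b_j$. Write $J(m_1,\dots,m_r):=I(1)_{m_1}\cdots I(r)_{m_r}M$. When all $n_j$ are large enough that $n_j\ge\ell_j$ we have the chain $M\supseteq J(\ell_1,\dots,\ell_r)\supseteq J(n_1,\ell_2,\dots,\ell_r)\supseteq J(n_1,n_2,\ell_3,\dots,\ell_r)\supseteq\cdots\supseteq J(n_1,\dots,n_r)$, giving
\[
\lambda(M/J(n_1,\dots,n_r))=\lambda(M/J(\ell_1,\dots,\ell_r))+\sum_{k=1}^{r}\lambda\!\left(\frac{J(n_1,\dots,n_{k-1},\ell_k,\ell_{k+1},\dots,\ell_r)}{J(n_1,\dots,n_{k-1},n_k,\ell_{k+1},\dots,\ell_r)}\right).
\]
The first summand is one of at most $\alpha^r$ constants. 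In the $k$-th term, telescope the $k$-th coordinate in unit steps from $\ell_k$ to $n_k$ and write the running value as $\alpha p+c$ with $0\le c\le\alpha-1$ (so $p\ge L$ throughout, since $\ell_k=\alpha L+b_k$); each unit step is exactly $H^{(k)}_{(b_1,\dots,b_{k-1},c,b_{k+1},\dots,b_r)}(q_1,\dots,q_{k-1},p,L,\dots,L)$, where $n_i=\alpha q_i+b_i$ for $i<k$ and there are $r-k$ copies of $L$. Because every argument here is $\ge L$, it suffices to take $L$ at least the finitely many thresholds $m$ from \cite{HHRT}[Theorem 4.1] (and to require each $n_i$ large, so that each $q_i$ exceeds those thresholds) to conclude that each such step equals the value of the numerical polynomial $P^{(k)}_{(b_1,\dots,b_{k-1},c,b_{k+1},\dots,b_r)}$.

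Next, regroup the unit steps of the $k$-th term by the value of $p$: for $p=L$ the residue $c$ runs over $b_k,\dots,\alpha-1$; for $L<p<q_k$ it runs over all of $0,\dots,\alpha-1$; and for $p=q_k$ it runs over $0,\dots,b_k-1$. The two boundary blocks $p=L$ and $p=q_k$ each contribute a sum of at most $\alpha$ polynomials in $q_1,\dots,q_k$ (indeed in $q_1,\dots,q_{k-1}$ for the first block). The main block
\[
\sum_{p=L+1}^{q_k-1}\ \sum_{c=0}^{\alpha-1}P^{(k)}_{(b_1,\dots,b_{k-1},c,b_{k+1},\dots,b_r)}(q_1,\dots,q_{k-1},p,L,\dots,L)
\]
is, after specializing the last $r-k$ variables to $L$, a sum over a shifted interval of a numerical polynomial in the variables $q_1,\dots,q_{k-1},p$; by Proposition \ref{poly} (applied with summation variable $p$ after translating its lower endpoint to $0$, which is legitimate for $q_k$ large) this is a polynomial in $q_1,\dots,q_k$ with rational coefficients. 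Adding the $r$ terms and the leading constant, we obtain $F_b\in\QQ[X_1,\dots,X_r]$ with $\lambda(M/J(n_1,\dots,n_r))=F_b(q_1,\dots,q_r)$ whenever all $n_j$ are large; substituting $q_j=(n_j-b_j)/\alpha$ yields $\widetilde F_b\in\QQ[X_1,\dots,X_r]$ with $\lambda(M/J(n_1,\dots,n_r))=\widetilde F_b(n_1,\dots,n_r)$ for all large $n_1,\dots,n_r$ lying in the residue class of $b$ modulo $\alpha$.

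Finally, let $s$ be the largest total degree of the finitely many polynomials $\widetilde F_b$, write $\widetilde F_b=\sum_{i_1+\cdots+i_r\le s}a^{(b)}_{i_1,\dots,i_r}X_1^{i_1}\cdots X_r^{i_r}$ with $a^{(b)}_{i_1,\dots,i_r}\in\QQ$, and define $\sigma_{i_1,\dots,i_r}(n_1,\dots,n_r):=a^{(b)}_{i_1,\dots,i_r}$ when $(n_1,\dots,n_r)$ lies in the residue class $b$ modulo $\alpha$; this is periodic of period $\alpha$. Taking $c$ to be the largest of the finitely many lower bounds on the $n_j$ collected above completes the argument. I expect the only real work to be the bookkeeping in the telescoping: the point of shifting the base point to $(\alpha L+b_1,\dots,\alpha L+b_r)$ is precisely to keep every evaluation of the $H^{(j)}$ inside the range where it agrees with the polynomial $P^{(j)}$, after which Proposition \ref{poly} absorbs the iterated summations.
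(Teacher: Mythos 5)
Your proof is correct and follows essentially the same route as the paper: telescope $\lambda(M/I(1)_{n_1}\cdots I(r)_{n_r}M)$ one coordinate at a time from a fixed base point, identify each unit step with a value of the Hilbert function $H^{(k)}$ which equals the numerical polynomial $P^{(k)}$ in the stable range, resum the full $\alpha$-cycles via Proposition \ref{poly}, and substitute $q_j=(n_j-b_j)/\alpha$ to extract periodic coefficients. The only (cosmetic) difference from the paper is that you base the telescope at $\ell_j=\alpha L+b_j$ rather than at the common point $t=\alpha l$, which splits the leftover residues between a bottom block ($p=L$, $c=b_k,\ldots,\alpha-1$) and a top block ($p=q_k$, $c=0,\ldots,b_k-1$), whereas the paper's choice puts all leftovers at the top and absorbs them in a single partial sum $Q^{(j)}$.
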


\begin{proof}
	(We use the integer $\alpha$ and the polynomials $P_{(b_1,\ldots,b_{r})}^{(j)}$  mentioned in the above discussion.)
	\\For all $1\leq j\leq r$ and integers $0\leq b_j\leq \alpha -1,$  we define the polynomials 
	$$
	 \begin{array}{l}
	 Q_{(b_1,\ldots,b_j,0,\ldots,0)}^{(j)}(n_1,\ldots,n_r)\\
	  = \left\{
	\begin{array}{l l}
	0  & \quad \text{if $b_j=0$ }\\
	\sum\limits_{i(j)=1}^{b_j}P_{(b_1,\ldots,b_{j-1},i(j)-1,0,\ldots,0)}^{(j)}(n_1,\ldots,n_r) & \quad \text{if $1\leq b_j\leq \alpha -1.$ }
	\end{array} \right.
	\end{array}
	$$

	Let $\alpha \leq t=\alpha l \in \mathbb N$ be such that $H_{(b_1,\ldots,b_r)}^{(j)}(m_1,\ldots,m_r)=P_{(b_1,\ldots,b_r)}^{(j)}(m_1,\ldots,m_r)$ for all $m_1,\ldots,m_r\geq l$ with $0\leq b_1,\ldots,b_r\leq \alpha -1.$ Let $(n_1,\ldots,n_r)\in (\alpha ;b_1,\ldots,b_r)$ with $n_j\geq c=t+\alpha $ for all $1\leq j\leq r.$ Then
	
	$$
	\begin{array}{l}
		\lambda(M/I(1)_{n_1}\cdots I(r)_{n_r}M)\\
		= \lambda(M/I(1)_{t}\cdots I(r)_{t}M)+\sum\limits_{i=0}^{n_1-t-1}\lambda\Big(\frac{I(1)_{t+i}I(2)_{t}\cdots I(r)_{t}M}{I(1)_{t+i+1}I(2)_{t}\cdots I(r)_{t}M}\Big)\\
		+ \sum\limits_{i=0}^{n_2-t-1}\lambda\Big(\frac{I(1)_{n_1}I(2)_{t+i}I(3)_{t}\cdots I(r)_{t}M}{I(1)_{n_1}I(2)_{t+i+1}I(3)_{t}\cdots I(r)_{t}M}\Big)\\
		+\cdots + \sum\limits_{i=0}^{n_r-t-1}\lambda\Big(\frac{I(1)_{n_1}I(2)_{n_2}\cdots I(r-1)_{n_{r-1}}I(r)_{t+i}M}{I(1)_{n_1}I(2)_{n_2}\cdots I(r-1)_{n_{r-1}}I(r)_{t+i+1}M}\Big)
		\\
		= \lambda(M/I(1)_{t}\cdots I(r)_{t}M)\\
		+{\sum\limits_{p(1)=0}^{\lfloor{\frac{n_1-t}{\alpha }}\rfloor-1}}{\sum\limits_{i(1)=1}^{\alpha }}P_{(i(1)-1,0,\ldots,0)}^{(1)}(l+p(1),l,\ldots,l)+ Q_{(b_1,0,\ldots,0)}^{(1)}(l+{\lfloor{\frac{n_1-t}{\alpha }}\rfloor},l,\ldots,l)
		\\
		+{\sum\limits_{p(2)=0}^{\lfloor{\frac{n_2-t}{\alpha }}\rfloor-1}}{\sum\limits_{i(2)=1}^{\alpha }}P_{(b_1,i(2)-1,0,\ldots,0)}^{(2)}({l+\lfloor{\frac{n_1-t}{\alpha }}\rfloor},l+p(2),l,\ldots,l)\\
		+ Q_{(b_1,b_2,0,\ldots,0)}^{(2)}(l+{\lfloor{\frac{n_1-t}{\alpha }}\rfloor},l+{\lfloor{\frac{n_2-t}{\alpha }}\rfloor},l,\ldots,l)\\
		\,\,\,\,\,\vdots\\
		+{\sum\limits_{p(r)=0}^{\lfloor{\frac{n_r-t}{\alpha }}\rfloor-1}}{\sum\limits_{i(r)=1}^{\alpha }}P_{(b_1,\ldots,b_{r-1},i(r)-1)}^{(r)}(l+{\lfloor{\frac{n_1-t}{\alpha }}\rfloor},\ldots,l+{\lfloor{\frac{n_{r-1}-t}{\alpha }}\rfloor},l+p(r))\\
		+ Q_{(b_1,\dots,b_{r})}^{(r)}(l+{\lfloor{\frac{n_1-t}{\alpha }}\rfloor},\ldots,l+{\lfloor{\frac{n_r-t}{\alpha }}\rfloor})
		\\
		= \lambda(M/I(1)_{t}\cdots I(r)_{t}M)\\
		+ {\sum\limits_{j=1}^{r}}\Big[{\sum\limits_{p(j)=0}^{\lfloor{\frac{n_j-t}{\alpha }}\rfloor-1}}{\sum\limits_{i(j)=1}^{\alpha }}P_{(b_1,\ldots,b_{j-1},i(j)-1,0,\ldots,0)}^{(j)}(l+{\lfloor{\frac{n_1-t}{\alpha }}\rfloor},\ldots,l+{\lfloor{\frac{n_{j-1}-t}{\alpha }}\rfloor},l+p(j),l,\ldots,l)\\
		+ Q_{(b_1,\ldots,b_j,0,\ldots,0)}^{(j)}(l+{\lfloor{\frac{n_1-t}{\alpha }}\rfloor},\ldots,l+{\lfloor{\frac{n_{j}-t}{\alpha }}\rfloor},l,\ldots,l)\Big]
	\end{array}
	$$	
	Using Proposition \ref{poly}, we have  a multigraded polynomial $$T_{(b_1,\ldots,b_r)}(X_1,\ldots,X_r):=\sum\limits_{i_1+\cdots+i_r\leq u{(b_1,\ldots,b_r)}}e_{(i_1,\ldots,i_r)}^{(b_1,\ldots,b_r)}X_1^{i_1}\cdots X_r^{i_r}\in\mathbb Q[X_1,\ldots,X_r]$$ 
	such that 
	$$
	\begin{array}{l}T_{(b_1,\ldots,b_r)}(m_1,\ldots,m_r)\\
	=\lambda(M/I(1)_{t}\cdots I(r)_{t}M)\\
	+ {\sum\limits_{j=1}^{r}}\Big[{\sum\limits_{p(j)=0}^{m_j-1}}\Big({\sum\limits_{i(j)=1}^{\alpha }}P_{(b_1,\ldots,b_{j-1},i(j)-1,0,\ldots,0)}^{(j)}(l+m_1,\ldots,l+m_{j-1},l+p(j),l,\ldots,l)\Big)\\
	+ Q_{(b_1,\ldots,b_j,0,\ldots,0)}^{(j)}(l+m_1,\ldots,l+m_j,l,\ldots,l)\Big]	
	\end{array}	
	$$
	and for all $(n_1,\ldots,n_r)\in (\alpha ;b_1,\ldots,b_r)$ with $ n_j\geq c$ for $1\leq j\leq r,$ we get \begin{eqnarray*}
		\lambda(M/I(1)_{n_1}\cdots I(r)_{n_r}M)&=&T_{(b_1,\ldots,b_r)}(a(n_1),\ldots,a(n_r))\end{eqnarray*}	where $a(n_j):={\lfloor{\frac{n_j-t}{\alpha }}\rfloor}=\frac{n_j-t-b_j}{\alpha }$ for all $1\leq j\leq r$ and let $u{(b_1,\ldots,b_r)}$ be the total degree of $T_{(b_1,\ldots,b_r)}.$   
	\\Now for all $(n_1,\ldots,n_r)\in (\alpha ;b_1,\ldots,b_r),$ we have
	$$
	\begin{array}{l}
			T_{(b_1,\ldots,b_r)}(a(n_1),\ldots,a(n_r)) \\
			\,\,\,\,\,=\sum\limits_{i_1+\cdots+i_r\leq u{(b_1,\ldots,b_r)}}e_{(i_1,\ldots,i_r)}^{(b_1,\ldots,b_r)}a(n_1)^{i_1}\cdots a(n_r)^{i_r}\\
			\,\,\,\,\,= \sum\limits_{i_1+\cdots+i_r\leq u{(b_1,\ldots,b_r)}}e_{(i_1,\ldots,i_r)}^{(b_1,\ldots,b_r)} \frac{(n_1-t-b_1)^{i_1}\cdots(n_r-t-b_r)^{i_r}}{\alpha ^{i_1+\cdots+i_r}}.
	\end{array}
	$$
	Let $\sigma_{(i_1,\ldots,i_r)}^{(b_1,\ldots,b_r)}(n_1,\ldots,n_r)$ denote the coefficient of $n_1^{i_1}\cdots n_r^{i_r}$ in the above equation. 
	\\Let $e_j=(0,\ldots,0,1,0,\ldots,0)\in\mathbb N^r$ with $1$ at the $j$-th position where $j\in\{1,\ldots,r\}.$ Note that $$(n_1,\ldots,n_r)+\alpha e_j\in (\alpha ;b_1,\ldots,b_r)\mbox{ and }a(n_j+\alpha )={\lfloor{\frac{n_j+\alpha -t}{\alpha }}\rfloor}=\frac{n_j+\alpha -t-b_j}{\alpha }.$$ Thus for all $1\leq j\leq r,$ we have  $$\sigma_{(i_1,\ldots,i_r)}^{(b_1,\ldots,b_r)}(n_1,\ldots,n_r)=\sigma_{(i_1,\ldots,i_r)}^{(b_1,\ldots,b_r)}(n_1,\ldots,n_{j-1},n_j+\alpha ,n_{j+1}\ldots,n_r).$$ 
	For all $(m_1,\ldots,m_r)\in(\alpha ;b_1,\ldots,b_r),$ we define  \[ \sigma_{i_1,\ldots,i_r}(m_1,\ldots,m_r) = \left\{
	\begin{array}{l l}
	\sigma_{(i_1,\ldots,i_r)}^{(b_1,\ldots,b_r)}(m_1,,\ldots,m_r)  & \quad \text{if $i_1+\cdots+i_r\leq u{(b_1,\ldots,b_r)}$ }\\
	0 & \quad \text{if $i_1+\cdots+i_r> u{(b_1,\ldots,b_r)}.$ }
	\end{array} \right.\] 
	Therefore for all $n_1,\ldots,n_r\geq c$ and $s=\max\{u{(b_1,\ldots,b_r)}: 0\leq b_1,\ldots,b_r\leq \alpha -1\},$ we get $$\lambda(M/I(1)_{n_1}\cdots I(r)_{n_r}M)=\sum\limits_{i_1+\cdots+i_r\leq s}\sigma_{i_1,\ldots,i_r}(n_1,\ldots,n_r)n_1^{i_1}\cdots n_r^{i_r}.$$ 
\end{proof}

\section{Polynomials,  Quasi Polynomials and Multiplicities II}\label{Sec3}

\begin{Lemma}\label{Lemma1}  Suppose that  $r,d\ge 1$ and $a=\binom{r-1+d}{r-1}$.  Then there exist $n_1(i),\ldots n_r(i)\in\ZZ_+$ for $1\le i\le a$ such that the set of  vectors consisting of all monomials of degree $d$ in $n_1(i),\ldots,n_r(i)$ for $1\le i\le a$,
	$$
	\{(n_1(1)^d,n_1(1)^{d-1}n_2(1),\ldots,n_r(1)^d),\ldots,(n_1(a)^d,n_1(a)^{d-1}n_2(a),\ldots,n_r(a)^d)\}
	$$
	is a $\QQ$-basis of $\QQ^a$.
\end{Lemma}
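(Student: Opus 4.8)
The plan is to recognize that $a=\binom{r-1+d}{r-1}$ is exactly the number of monomials of degree $d$ in the $r$ variables $x_1,\ldots,x_r$. Enumerate these monomials as $x^{\beta(1)},\ldots,x^{\beta(a)}$, where $\beta(1),\ldots,\beta(a)\in\NN^r$ are the distinct exponent vectors with $|\beta(j)|=d$. For a point $(n_1,\ldots,n_r)$, the vector of all degree-$d$ monomials evaluated there is $\big((n_1,\ldots,n_r)^{\beta(1)},\ldots,(n_1,\ldots,n_r)^{\beta(a)}\big)\in\QQ^a$, so the statement asks for $a$ points in $\ZZ_+^r$ whose evaluation vectors are linearly independent over $\QQ$; since there are exactly $a$ of them in the $a$-dimensional space $\QQ^a$, linear independence is the same as being a basis. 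It thus suffices to prove the following general fact and apply it with $X=\ZZ_+^r$ and $f_j$ the function $(n_1,\ldots,n_r)\mapsto (n_1,\ldots,n_r)^{\beta(j)}$: if $f_1,\ldots,f_a\colon X\to\QQ$ are $\QQ$-linearly independent functions, then there exist $v_1,\ldots,v_a\in X$ with $\det\big(f_j(v_i)\big)_{1\le i,j\le a}\neq 0$.

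First I would check the hypothesis for our choice of $X$ and $f_j$, namely that the monomials $x^{\beta(1)},\ldots,x^{\beta(a)}$ are linearly independent as $\QQ$-valued functions on $\ZZ_+^r$. If $\sum_{j}c_j x^{\beta(j)}$ vanishes at every point of $\ZZ_+^r$, then this polynomial is identically zero, because a nonzero polynomial over the infinite field $\QQ$ cannot vanish on the whole grid $\ZZ_+^r$ (an immediate induction on $r$, fixing all but one variable); since the $\beta(j)$ are distinct, all $c_j=0$.

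Next I would prove the general fact by induction on $a$. The case $a=1$ is immediate, since $f_1\neq 0$ means $f_1(v_1)\neq 0$ for some $v_1\in X$. For the inductive step, observe that $f_1,\ldots,f_{a-1}$ are still linearly independent, so by induction there are $v_1,\ldots,v_{a-1}\in X$ with $A:=\big(f_j(v_i)\big)_{1\le i,j\le a-1}$ invertible. Define $g\colon X\to\QQ$ by letting $g(v)$ be the determinant of the $a\times a$ matrix whose $i$-th row is $\big(f_1(v_i),\ldots,f_a(v_i)\big)$ for $1\le i\le a-1$ and whose last row is $\big(f_1(v),\ldots,f_a(v)\big)$. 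Cofactor expansion along the last row gives $g=\sum_{j=1}^{a}c_j f_j$ with $c_a=\det A\neq 0$, so $g$ is a nontrivial $\QQ$-linear combination of $f_1,\ldots,f_a$ and hence $g\neq 0$; choosing $v_a\in X$ with $g(v_a)\neq 0$ makes the $a\times a$ matrix $\big(f_j(v_i)\big)_{1\le i,j\le a}$ invertible. Applying this with $X=\ZZ_+^r$ and writing $v_i=(n_1(i),\ldots,n_r(i))$ yields the lemma.

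There is no real obstacle here; the only point requiring a little care is that the $n_k(i)$ must be positive integers rather than merely real or rational numbers, and this is handled automatically by running the induction directly over $X=\ZZ_+^r$, using only that $\ZZ_+$ is infinite. An alternative, more computational route would be to note that $\det\big((n(i))^{\beta(j)}\big)_{1\le i,j\le a}$ is a polynomial in the $ra$ variables $n_k(i)$, argue (again via functional independence of the monomials) that it is not the zero polynomial, and then specialize to a point of $\ZZ_+^{ra}$ where it does not vanish; the inductive argument above simply packages this more cleanly.
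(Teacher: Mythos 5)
Your proof is correct. Both you and the paper rely on the same core fact — a nonzero polynomial over an infinite field cannot vanish on all of $\ZZ_+^r$ (or $(\QQ_+)^r$) — but you organize the argument differently. The paper defines the monomial map $\Lambda\colon(\QQ_+)^r\to\QQ^a$, shows its image is not contained in any proper linear subspace (else a nonzero linear form pulled back along $\Lambda$ would be a nonzero degree-$d$ form vanishing on $(\QQ_+)^r$), extracts a basis from the image, and then clears denominators to land in $\ZZ_+^r$. You instead phrase the key point as linear independence of the monomial \emph{functions} on $\ZZ_+^r$, prove a clean general lemma (that $\QQ$-linearly independent functions on any set admit evaluation points with nonsingular Gram-type matrix, by induction on the number of functions via cofactor expansion), and apply it directly over $\ZZ_+^r$, eliminating the scaling step. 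Your route buys a reusable lemma and avoids passing through $(\QQ_+)^r$; the paper's route is shorter in this specific instance because it piggybacks directly on the dimension-count of $\QQ^a$ rather than setting up an induction. Both are complete and correct; the difference is organizational rather than substantive.
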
 

\begin{proof} Let $\Lambda:(\QQ_+)^r\rightarrow \QQ^a$ be defined by $\Lambda(s_1,\ldots,s_r)=(s_1^d,s_1^{d-1}s_2,\ldots,s_r^d)$. We will first show that the image of $\Lambda$ is not contained in a proper $\QQ$-linear subspace of $\QQ^a$. Suppose otherwise. Then there exists a nonzero linear form 
	$$
	L(y_{d,0,\ldots,0},y_{d-1,1,0,\ldots,0},\ldots,y_{0,\ldots,0,d})=\sum_{i_1+\cdots+i_r=d}\alpha_{i_1,\ldots,i_r}y_{i_1,\ldots,i_r}
	$$
	on $\QQ^a$  such that $L(s_1^d,s_1^{d-1}s_2,\ldots,s_r^d)=0$ for all $(s_1,\ldots,s_r)\in (\QQ_+)^r$. The degree $d$ form $G(x_1,\ldots,x_r):= L(x_1^d,x_1^{d-1}x_2,\ldots,x_r^d)$ vanishes on $(\QQ_+)^r$. Since $\QQ$ is an infinite field, this implies that $G(x_1,\ldots,x_r)$ is the zero polynomial (as follows from the proof of Theorem 2.19 \cite{J}).  But $G(x_1,\ldots,x_r)$ is a nontrivial linear combination of the monomials in $x_1,\ldots, x_r$ of degree $d$, so it cannot be zero.  So $\mbox{Image}(\Lambda)$ is not contained in a proper linear subspace of $\QQ^a$. Thus there exist
	$(s_1(i),\ldots,s_r(i))\in (\QQ_+)^r$ for $1\le i\le a$ such that 
	$$
	\{(s_1(1)^d,s_1(1)^{d-1}s_2(1),\ldots,s_r(1)^d),\ldots,(s_1(a)^d,s_1(a)^{d-1}s_2(a),\ldots,s_r(a)^d)\}
	$$
	is a $\QQ$-basis of $\QQ^a$. There exists a positive integer $u$ such that $n_i(j)=us_i(j)\in \ZZ_+$ for all $i,j$, and since
	$$
	(n_1(j)^d,n_1(j)^{d-1}n_2(j),\ldots,n_r(j)^d)=u^d(s_1(j)^d,s_1(j)^{d-1}s_2(j),\ldots,s_r(j)^d)
	$$
	for $1\le j\le a$, we have that 
	$$
	\{(n_1(1)^d,n_1(1)^{d-1}n_2(1),\ldots,n_r(1)^d),\ldots,(n_1(a)^d,n_1(a)^{d-1}n_2(a),\ldots,n_r(a)^d)\}
	$$
	is a $\QQ$-basis of $\QQ^d$.
\end{proof}

\begin{Lemma}\label{Prop1}   Let $g=\binom{r-1+d}{r-1}$. There exist $n_1(i),\ldots,n_r(i)\in \ZZ_+$ for  $1\le i\le g$ and $c_j(i_1,\ldots,i_r)\in \QQ$ for $1\le j\le g$ and  $i_1,\ldots,i_r\in \NN$ with $i_1+\cdots+i_r=d$, such that if  $F(x_1,\ldots,x_r)\in \QQ[x_1,\ldots,x_r]$ is a polynomial of total degree $d$, with an expansion
	\begin{equation}\label{eq1}
	F(x_1,\ldots,x_r)=\sum_{i_1+\cdots+i_r\le d}a_{i_1,\ldots,i_r}x_1^{i_1}\cdots x_r^{i_r}\in \QQ[x_1,\ldots,x_r]
	\end{equation}
	with $a_{i_1,\ldots,i_r}\in \QQ$, then
	for $i_1,\ldots,i_r\in \NN$ with $i_1+\cdots+i_r=d$,
	\begin{equation}\label{eq3}
	a_{i_1,\ldots,i_r}=\sum_{j=1}^g c_j(i_1,\ldots,i_r)b_j
	\end{equation}
	where
	\begin{equation}\label{eq2}
	b_j=\lim_{m\rightarrow \infty}\frac{F(mn_1(j),\ldots,mn_r(j))}{m^d}.
	\end{equation}
\end{Lemma}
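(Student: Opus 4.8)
The plan is to reduce everything to a single linear-algebra fact: the passage from the top-degree coefficients of $F$ to the numbers $b_1,\ldots,b_g$ is effected by an invertible rational matrix that is independent of $F$.

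First I would compute the limits in (\ref{eq2}) explicitly. Substituting $x_k=mn_k(j)$ into the expansion (\ref{eq1}) gives
\[
\frac{F(mn_1(j),\ldots,mn_r(j))}{m^d}=\sum_{i_1+\cdots+i_r\le d}a_{i_1,\ldots,i_r}\,m^{(i_1+\cdots+i_r)-d}\,n_1(j)^{i_1}\cdots n_r(j)^{i_r},
\]
and the terms with $i_1+\cdots+i_r<d$ tend to $0$ as $m\to\infty$, while the terms with $i_1+\cdots+i_r=d$ are constant in $m$. Hence the limit exists and
\[
b_j=\sum_{i_1+\cdots+i_r=d}a_{i_1,\ldots,i_r}\,n_1(j)^{i_1}\cdots n_r(j)^{i_r};
\]
that is, $b_j$ is the value of the leading form of $F$ at $(n_1(j),\ldots,n_r(j))$, and it depends on $F$ only through the coefficients $a_{i_1,\ldots,i_r}$ with $i_1+\cdots+i_r=d$.

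Next I would apply Lemma \ref{Lemma1} with $a=g=\binom{r-1+d}{r-1}$ (which is exactly the number of monomials of degree $d$ in $r$ variables) to choose $n_1(i),\ldots,n_r(i)\in\ZZ_+$ for $1\le i\le g$ so that the $g$ vectors obtained by evaluating all degree-$d$ monomials at these points form a $\QQ$-basis of $\QQ^g$. Fixing an ordering of the degree-$d$ monomials, let $N\in\QQ^{g\times g}$ be the matrix whose $j$-th row is $\big(n_1(j)^{i_1}\cdots n_r(j)^{i_r}\big)_{i_1+\cdots+i_r=d}$; by Lemma \ref{Lemma1} the rows of $N$ form a basis, so $N$ is invertible over $\QQ$. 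The computation of the previous paragraph says precisely that the column vector $(b_1,\ldots,b_g)^{\mathrm{t}}$ equals $N$ times the column vector $(a_{i_1,\ldots,i_r})_{i_1+\cdots+i_r=d}^{\mathrm{t}}$. Therefore $(a_{i_1,\ldots,i_r})_{i_1+\cdots+i_r=d}^{\mathrm{t}}=N^{-1}(b_1,\ldots,b_g)^{\mathrm{t}}$, and defining $c_j(i_1,\ldots,i_r)$ to be the $\big((i_1,\ldots,i_r),j\big)$-entry of $N^{-1}\in\QQ^{g\times g}$ gives (\ref{eq3}). Since $N$, and hence $N^{-1}$, depends only on the chosen points and not on $F$, the same points $n_k(i)$ and constants $c_j(i_1,\ldots,i_r)$ work for every polynomial $F$ of total degree $d$.

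There is no real obstacle beyond this bookkeeping, since the content has been isolated in Lemma \ref{Lemma1}; the only point to verify carefully is that the parameter in Lemma \ref{Lemma1} is taken to be $g$, so that $N$ is square, after which one merely inverts a matrix.
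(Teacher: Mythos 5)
Your proof is correct and follows essentially the same route as the paper: compute the limit to see that $b_j$ is the value of the leading form at the chosen points, invoke Lemma \ref{Lemma1} with parameter $g$ to get an invertible $g\times g$ matrix over $\QQ$, and read off the $c_j(i_1,\ldots,i_r)$ as the entries of its inverse. Your matrix $N$ is the paper's matrix $B$, and the argument is the same linear-algebra reduction.
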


\begin{proof} By Lemma \ref{Lemma1}, we can choose $n_i(j)\in \ZZ_+$ for $1\le i\le r$ and $1\le j\le g$ so that
	$$
	B=\left(\begin{array}{cccc}
	n_1(1)^d&n_1(1)^{d-1}n_2(1)&\cdots&n_r(1)^d\\
	\vdots&&&\vdots\\
	n_1(g)^d&n_1(g)^{d-1}n_2(g)&\cdots&n_r(g)^d
	\end{array}\right)
	$$
	has rank $g$. Write
	\begin{equation}\label{eq6}
	B^{-1}=\left(\begin{array}{ccc}
	c_1(d,0,\ldots,0)&\cdots&c_g(d,0,\ldots,0)\\
	c_1(d-1,1,0,\ldots,0)&\cdots&c_g(d-1,1,0,\ldots,0)\\
	\vdots&\vdots\\
	c_1(0,\ldots,0,d)&\cdots&c_g(0,\ldots,0,d)
	\end{array}\right).
	\end{equation}
	Suppose $F(x_1,\ldots,x_r)\in \QQ[x_1,\ldots,x_r]$ has the expression (\ref{eq1}). By (\ref{eq1}) and (\ref{eq2}),
	$$
	b_j=\lim_{t\rightarrow\infty}\frac{F(tn_1(j),\ldots,tn_r(j))}{t^d}
	=\sum_{i_1+\cdots+i_r=d}a_{i_1,\ldots,i_r}n_1(j)^{i_1}\cdots n_r(j)^{i_r}
	$$
	for $1\le j\le g$. We thus have that 
	$$
	B\left(\begin{array}{c} a_{d,0,\ldots,0}\\
	a_{d-1,1,0,\ldots,0}\\
	\vdots\\
	a_{0,\ldots,0,d}\end{array}\right)
	=
	\left(\begin{array}{c}
	b_1\\b_2\\
	\vdots\\b_g\end{array}\right),
	$$
	so that (\ref{eq3}) holds by (\ref{eq6}).
\end{proof}

Suppose that $R$ is a Noetherian local ring of dimension $d,$ $M$ is a finitely generated $R$-module and $J$ is an $m_R$-primary ideal in $R$. Recall that 
the multiplicity $e_R(J;M)$  is defined by the expansion of the Hilbert polynomial of $M$, which is equal to $\lambda(M/J^mM)$ for $m\gg 0$, 
$$
\frac{e_R(J;M)}{d!}m^d+\mbox{ lower order terms in $m$,}
$$
so that 
$$
e_R(J;M)=\lim_{m\rightarrow \infty}d!\frac{\lambda(M/J^mM)}{m^d}.
$$

\begin{Lemma}\label{Lemma0} Suppose $R$ is a Noetherian local ring of dimension $d,$ $M$ is a finitely generated $R$-module  and 
	$$
	\mathcal I(1)=\{I(1)_i\},\ldots,\mathcal I(r)=\{I(r)_i\}
	$$
	are Noetherian filtrations of $R$ by $m_R$-primary ideals. Let $a\in \ZZ_+$ be such that $I(j)_{ia}=I(j)_a^i$ for $1\le j\le r$ and $i\ge 0$. Suppose $n_1,\ldots,n_r\in \NN$. Then 
	$$
	\lim_{m\rightarrow \infty} \frac{\lambda(M/I(1)_{mn_1}\cdots I(r)_{mn_r}M)}{m^d}=\frac{1}{d!a^d}e_R(I(1)_{an_1}\cdots I(r)_{an_r};M)\in \QQ_+.
	$$
\end{Lemma}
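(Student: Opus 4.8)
The plan is to reduce the limit over all $m$ to the Hilbert function of a single $m_R$-primary ideal along the arithmetic progression $m=ak$, and then to fill in the remaining values of $m$ by a sandwich argument.

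First I would put $J:=I(1)_{an_1}\cdots I(r)_{an_r}$, which is $m_R$-primary. The one genuinely computational point is the identity $J^k=I(1)_{akn_1}\cdots I(r)_{akn_r}$ for all $k\ge 0$: from the hypothesis $I(j)_{ia}=I(j)_a^{\,i}$ we get $I(j)_{an_j}=I(j)_a^{\,n_j}$ and likewise $I(j)_{akn_j}=I(j)_a^{\,kn_j}=(I(j)_a^{\,n_j})^k=I(j)_{an_j}^{\,k}$, and since $(A_1\cdots A_r)^k=A_1^k\cdots A_r^k$ for ideals in a commutative ring, multiplying over $j$ gives $J^k=\prod_j I(j)_{akn_j}$. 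Hence for $m=ak$,
\[
\frac{\lambda(M/I(1)_{mn_1}\cdots I(r)_{mn_r}M)}{m^d}=\frac{1}{a^d}\cdot\frac{\lambda(M/J^kM)}{k^d},
\]
which tends to $\dfrac{1}{a^d}\cdot\dfrac{e_R(J;M)}{d!}$ as $k\to\infty$, by the description of $e_R(J;M)$ recalled just before the lemma.

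Next I would treat an arbitrary $m$. Set $f(m):=\lambda(M/I(1)_{mn_1}\cdots I(r)_{mn_r}M)$ and $k:=\lfloor m/a\rfloor$, so that $ak\le m<a(k+1)$ and therefore $akn_j\le mn_j\le a(k+1)n_j$ for each $j$. Since the filtrations are descending, $I(j)_{a(k+1)n_j}\subseteq I(j)_{mn_j}\subseteq I(j)_{akn_j}$; multiplying over $j$ and applying to $M$ gives $J^{k+1}M\subseteq I(1)_{mn_1}\cdots I(r)_{mn_r}M\subseteq J^kM$, whence
\[
\frac{\lambda(M/J^kM)}{(a(k+1))^d}\ \le\ \frac{f(m)}{m^d}\ \le\ \frac{\lambda(M/J^{k+1}M)}{(ak)^d}.
\]
As $m\to\infty$ we have $k\to\infty$, and using the multiplicity formula together with $(k/(k+1))^d\to 1$ both outer terms tend to $e_R(J;M)/(d!\,a^d)$; the squeeze theorem then gives $\lim_{m\to\infty} f(m)/m^d=\frac{1}{d!\,a^d}e_R(J;M)$. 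Since $e_R(J;M)$ is a non-negative integer and $d!\,a^d$ a positive integer, the limit is a non-negative rational number (and is positive exactly when $\dim M=d$).

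I do not expect a real obstacle here: the content is entirely in the identity $J^k=\prod_j I(j)_{akn_j}$ and in the elementary interpolation of the previous paragraph. One could alternatively observe that $f$ is non-decreasing in $m$ (again because the filtrations are descending) and run the same sandwich with $f(ak)$ and $f(a(k+1))$ in place of $\lambda(M/J^kM)$ and $\lambda(M/J^{k+1}M)$.
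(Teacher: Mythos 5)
Your proof is correct and is essentially the same argument as the paper's. The paper packages the key step as a short exact sequence $0\to J^uM/\prod_jI(j)_{mn_j}M\to M/\prod_jI(j)_{mn_j}M\to M/J^uM\to 0$ (with $m=ua+v$, $0\le v<a$) and then bounds the kernel term above by $\lambda(M/J^{u+1}M)-\lambda(M/J^uM)$, which is a polynomial of degree $d-1$ in $u$, while you express the same containment $J^{k+1}M\subseteq\prod_jI(j)_{mn_j}M\subseteq J^kM$ as a sandwich and invoke the squeeze theorem; these are interchangeable packagings of the identical idea. (One cosmetic note: as you observe, the limit is $0$ when $\dim M<d$, so the paper's assertion that it lies in $\QQ_+$ rather than $\QQ_{\ge 0}$ is slightly imprecise, but that is a quibble with the paper's statement rather than with your argument.)
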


\begin{proof} For $m\in \ZZ_+$, write $m=ua+v$ with $0\le v<a$. Then we have a short exact sequence of $R$-modules
	$$
	\begin{array}{l}
	0\rightarrow I(1)_{uan_1}\cdots I(r)_{uan_r}M/I(1)_{mn_1}\cdots I(r)_{mn_r}M\rightarrow M/I(1)_{mn_1}\cdots I(r)_{mn_r}M\\
	\rightarrow
	M/I(1)_{uan_1}\cdots I(r)_{uan_r}M\rightarrow 0.
	\end{array}
	$$
	We have that  for $m\gg 0$,
	$$
	\begin{array}{l}
	\lambda(I(1)_{uan_1}\cdots I(r)_{uan_r}M/I(1)_{mn_1}\cdots I(r)_{mn_r}M)\\
	\le 
	\lambda(M/I(1)_{(u+1)an_1}\cdots I(r)_{(u+1)an_r}M)-\lambda(M/I(1)_{uan_1}\cdots I(r)_{uan_r}M)\\
	=\frac{e_R(I(1)_{an_1}\cdots I(r)_{an_r};M)}{(d-1)!}u^{d-1}+\mbox{ lower order terms in $u$.}
	\end{array}
	$$
	So
	$$
	\begin{array}{l}
	\lim_{m\rightarrow \infty}\frac{\lambda(I(1)_{uan_1}\cdots I(r)_{uan_r}M/I(1)_{mn_1}\cdots I(r)_{mn_r}M)}{m^d}\\
	\le \lim_{u\rightarrow \infty}\frac{\frac{
			e_R(I(1)_{an_1}\cdots I(r)_{an_r};M)
		}{(d-1)!}u^{d-1}+\mbox{ lower order terms in $u$}}{(ua+v)^d}=0.
	\end{array}
	$$
	Thus
	$$
	\begin{array}{lll}
	\lim_{m\rightarrow \infty}\frac{\lambda(M/I(1)_{mn_1}\cdots I(r)_{mn_r}M)}{m^d}&=&
	\lim_{u\rightarrow\infty}\frac{\lambda(M/I(1)_{uan_1}\cdots I(r)_{uan_r}M)}{(ua+v)^d}\\
	&=&\frac{1}{d!a^d}e_R(I(1)_{an_1}\cdots I(r)_{an_r};M).
	\end{array}
	$$
	
\end{proof}

Define the total degree of a quasi polynomial $\sum \sigma_{i_1,\ldots,i_r}(n_1,\ldots,n_r)n_1^{i_1}\cdots n_r^{i_r}$ to be the largest $t$ such that there exists $i_1,\ldots,i_r\in \NN$ with $i_1+\cdots+i_r=t$, such that $\sigma_{i_1,\ldots,i_r}(n_1,\ldots,n_r)$ is not (identically) zero.

\begin{Proposition}\label{Prop4} Let
	$$
	P(n_1,\ldots,n_r)=\sum \sigma_{i_1,\ldots,i_r}(n_1,\ldots,n_r)n_1^{i_1}\cdots n_r^{i_r}
	$$
	be the quasi polynomial of the conclusions of Proposition \ref{Prop3'}. Then the total degree of $P(n_1,\ldots,n_r)$ is $\dim M$, and $\sigma_{i_1,\ldots,i_r}(n_1,\ldots,n_r)$ is a constant function if $i_1+\cdots+i_r=\dim M$.
\end{Proposition}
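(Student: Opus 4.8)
Put $e=\dim M$ (we may assume $M\ne 0$) and let $\alpha$ be the integer from the proof of Proposition~\ref{Prop3'}: it is a common period of the functions $\sigma_{i_1,\dots,i_r}$ and also satisfies $I(j)_{i\alpha}=I(j)_\alpha^i$ for all $j$ and all $i\ge 0$. The plan is to compare, for a fixed $(n_1,\dots,n_r)\in\ZZ_+^r$, the behaviour of $P$ under the dilation $(n_1,\dots,n_r)\mapsto(mn_1,\dots,mn_r)$ along arithmetic progressions in $m$. Fixing a residue $\mu$ modulo $\alpha$, for $m\equiv\mu\pmod\alpha$ periodicity gives $\sigma_{i_1,\dots,i_r}(mn_1,\dots,mn_r)=\sigma_{i_1,\dots,i_r}(\mu n_1,\dots,\mu n_r)$, so for $m\gg 0$ Proposition~\ref{Prop3'} gives
\[
p_{\mu,n}(m):=\lambda\bigl(M/I(1)_{mn_1}\cdots I(r)_{mn_r}M\bigr)=\sum_{i_1+\cdots+i_r\le s}\sigma_{i_1,\dots,i_r}(\mu n_1,\dots,\mu n_r)\,n_1^{i_1}\cdots n_r^{i_r}\,m^{i_1+\cdots+i_r},
\]
a polynomial in $m$. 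Choosing $c_j\in\ZZ_+$ with $m_R^{c_j}\subseteq I(j)_1$ we get $m_R^{m\sum_j c_jn_j}M\subseteq I(1)_{mn_1}\cdots I(r)_{mn_r}M$, hence $0\le p_{\mu,n}(m)\le\lambda\bigl(M/m_R^{m\sum_j c_jn_j}M\bigr)=O(m^e)$ for $m\gg 0$ by Hilbert--Samuel theory. A nonnegative polynomial with this growth has degree $\le e$, so $\sigma_{i_1,\dots,i_r}(\mu n_1,\dots,\mu n_r)=0$ whenever $i_1+\cdots+i_r>e$; taking $\mu=1$ and letting $(n_1,\dots,n_r)$ range over $\ZZ_+^r$ (which meets every residue class mod $\alpha$), periodicity forces $\sigma_{i_1,\dots,i_r}\equiv 0$ for $i_1+\cdots+i_r>e$. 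Thus $P$ has total degree at most $e$.

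Next I would produce a rational limit and show it is a polynomial. Running the proof of Lemma~\ref{Lemma0} with $e$ in place of $d$ (the only change being that $\lambda(M/J^uM)$ is, for $u\gg 0$, a polynomial in $u$ of degree exactly $e$ with positive leading coefficient for any $m_R$-primary $J$ and $M\ne 0$) shows that $\phi(n_1,\dots,n_r):=\lim_{m\to\infty}\lambda(M/I(1)_{mn_1}\cdots I(r)_{mn_r}M)/m^e$ exists and lies in $\QQ_+$ for every $(n_1,\dots,n_r)\in\ZZ_+^r$. Since $P$ has total degree $\le e$, the coefficient of $m^e$ in $p_{\mu,n}$ is $\sum_{i_1+\cdots+i_r=e}\sigma_{i_1,\dots,i_r}(\mu n_1,\dots,\mu n_r)n_1^{i_1}\cdots n_r^{i_r}$, and since $p_{\mu,n}(m)/m^e\to\phi(n)$ we obtain
\[
\phi(n_1,\dots,n_r)=\sum_{i_1+\cdots+i_r=e}\sigma_{i_1,\dots,i_r}(\mu n_1,\dots,\mu n_r)\,n_1^{i_1}\cdots n_r^{i_r}\qquad\text{for all }\mu\in\ZZ_+,\ (n_1,\dots,n_r)\in\ZZ_+^r.
\]
Taking $\mu=\alpha$, periodicity gives $\sigma_{i_1,\dots,i_r}(\alpha n_1,\dots,\alpha n_r)=\sigma_{i_1,\dots,i_r}(0,\dots,0)=:\gamma_{i_1,\dots,i_r}$, so $\phi(n)=\sum_{i_1+\cdots+i_r=e}\gamma_{i_1,\dots,i_r}n_1^{i_1}\cdots n_r^{i_r}$ is a homogeneous polynomial of degree $e$; and $\sum_{i_1+\cdots+i_r=e}\gamma_{i_1,\dots,i_r}=\phi(1,\dots,1)>0$, so some $\gamma_{i_1,\dots,i_r}$ is nonzero.

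Finally I would pin down the top coefficients. Taking $\mu=1$ and subtracting the two expressions for $\phi$ gives
\[
\sum_{i_1+\cdots+i_r=e}\bigl(\sigma_{i_1,\dots,i_r}(n_1,\dots,n_r)-\gamma_{i_1,\dots,i_r}\bigr)n_1^{i_1}\cdots n_r^{i_r}=0\qquad\text{for all }(n_1,\dots,n_r)\in\ZZ_+^r.
\]
Fix a residue $b=(b_1,\dots,b_r)$ with $1\le b_j\le\alpha$ and restrict to $n=b+\alpha k$ with $k\in\NN^r$; by periodicity each $\sigma_{i_1,\dots,i_r}(n)$ equals the constant $\sigma_{i_1,\dots,i_r}(b_1,\dots,b_r)$, so the left-hand side is a polynomial in $k$ vanishing on all of $\NN^r$, hence identically zero, and comparing its degree-$e$ part in $k$ (whose monomials $k_1^{i_1}\cdots k_r^{i_r}$ with $i_1+\cdots+i_r=e$ are linearly independent) yields $\sigma_{i_1,\dots,i_r}(b_1,\dots,b_r)=\gamma_{i_1,\dots,i_r}$ for all such $i$ and all $b$. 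Hence each $\sigma_{i_1,\dots,i_r}$ with $i_1+\cdots+i_r=e$ is the constant function $\gamma_{i_1,\dots,i_r}$, and since one of these constants is nonzero, the total degree of $P$ is exactly $e=\dim M$.

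The main obstacle is the bookkeeping forced by the periodicity of the coefficient functions $\sigma_{i_1,\dots,i_r}$ under the dilation $n\mapsto mn$: one must pass to arithmetic progressions in $m$ on which the $\sigma$'s freeze, and then play the progression $\mu=\alpha$ (which collapses the periodic functions to the constants $\gamma_{i_1,\dots,i_r}$ and shows $\phi$ is a homogeneous polynomial) against the progression $\mu=1$ (which recovers the coefficient functions themselves) before a residue-by-residue polynomial comparison closes the argument. The ingredients ``$m_R^{N}\subseteq$ product of filtration pieces'' (for the degree bound) and the existence and positivity of $\phi$ (from Lemma~\ref{Lemma0}) are routine, but it is precisely the positivity of $\phi$ that upgrades ``total degree $\le\dim M$'' to ``total degree $=\dim M$''.
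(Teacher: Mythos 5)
Your proof is correct and reaches the same conclusion by the same core strategy as the paper: freeze the periodic coefficients, use the Hilbert--Samuel positivity to pin the top degree, and compare residue classes by a polynomial identity. The bookkeeping is organized differently. The paper fixes a residue class $b$ of $(n_1,\dots,n_r)$ modulo the period $a$ and works with the honest polynomials $P_{(b)}(n)=P(an+b)$; it obtains the exact degree of $P_{(0)}$ from \cite[Lemma~11.1.3]{HS}, shows $\lim_m P_{(b)}(mn)/m^{\dim M}=\lim_m P_{(0)}(mn)/m^{\dim M}$ via an exact-sequence length bound, and then invokes Lemma~\ref{Prop1} to match the degree-$\dim M$ coefficients. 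You instead freeze the $\sigma$'s by restricting $m$ to a residue class modulo the period and viewing $P(mn)$ as a polynomial in $m$ along a ray, bound its degree by $\dim M$ directly from the containment $m_R^{m\sum c_jn_j}M\subseteq I(1)_{mn_1}\cdots I(r)_{mn_r}M$, and close with an explicit linear-independence comparison rather than Lemma~\ref{Prop1}. Your version has the small advantage of making the claim ``total degree $=\dim M$'' fully explicit (the paper leaves the ``$\le\dim M$'' direction somewhat implicit), and it avoids the black-box appeal to Lemma~\ref{Prop1}; the paper's is a bit shorter because it can cite that lemma. One small imprecision to note: after deducing that the coefficient of $m^t$ ($t>e$) in $p_{\mu,n}$ vanishes, what you actually get is the vanishing of the sum $\sum_{|i|=t}\sigma_i(\mu n)n^i$, not of each $\sigma_i(\mu n)$ individually; you recover the individual vanishing only after the residue-by-residue substitution $n=b+\alpha k$ and linear independence — exactly the manoeuvre you carry out in your final paragraph — so this should be flagged the first time it appears rather than deferred.
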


\begin{proof} Let $t$ be the total degree of $P(n_1,\ldots,n_r)$ and let $a\in \ZZ_+$ be such that $I(j)_{ai}=I(j)_a^i$ for all $i\ge 0$ and $1\le j\le r$, so that $a$ is a common period of the coefficients $\sigma_{i_1,\ldots,i_r}(n_1,\ldots,n_r)$ of $P(n_1,\ldots,n_r)$ (by the proof of Proposition \ref{Prop3'}). Suppose that $b_1,\ldots,b_r\in\NN$ with $0\le b_i<a$ for all $i$. Suppose $n_1,\ldots,n_r\in \ZZ_+$. Then for $n_1,\ldots,n_r\gg 0$,
	$$
	\lambda(M/I(1)_{an_1+b_1}\cdots I(r)_{an_r+b_r}M)=P(an_1+b_1,\ldots,an_r+b_r).
	$$
	Define 
	$$
	\begin{array}{l}
	{P}_{(b_1,\ldots,b_r)}(n_1,\ldots,n_r) := P(an_1+b_1,\ldots,an_r+b_r)\\
	=\sum_{i_1+\cdots+i_r\le t}\sigma_{i_1,\ldots,i_r}(an_1+b_1,\ldots,an_r+b_r)(an_1+b_1)^{i_1}\cdots(an_r+b_r)^{i_r}\\
	=\sum_{i_1+\cdots+i_r\le t}\sigma_{i_1,\ldots,i_r}(b_1,\ldots,b_r)(an_1+b_1)^{i_1}\cdots(an_r+b_r)^{i_r}\\
	=\sum_{i_1+\cdots+i_r=t}\sigma_{i_1,\ldots,i_r}(b_1,\ldots,b_r)a^tn_1^{i_1}\cdots n_r^{i_r}\\
	\,\,\,\,\, +\mbox{ lower total order terms in }n_1,\ldots,n_r.
	\end{array}
	$$
	We have that ${P}_{(b_1,\ldots,b_r)}(n_1,\ldots,n_r)\in \QQ[n_1,\ldots,n_r]$ is a polynomial. For fixed $n_1,\ldots,n_r\in \ZZ_+$ and $m>>0,$ we have $${P}_{(0,\ldots,0)}(mn_1,\ldots,mn_r)=\lambda(M/I(1)_{amn_1}\cdots I(r)_{amn_r}M)=\lambda(M/(I(1)_{an_1}\cdots I(r)_{an_r})^mM).$$ Thus by \cite{HS}[Lemma 11.1.3], 	$$
	\lim_{m\rightarrow \infty}\frac{{P}_{(0,\ldots,0)}(mn_1,\ldots,mn_r)}{m^{dim M}}\in \QQ_+.
		$$
	Therefore the total degree of ${P}_{(0,\ldots,0)}(n_1,\ldots,n_r)$ is $\dim M$.
	
	Fix $n_1,\ldots, n_r\in \ZZ_+$ and $b_i\in \NN$ with $0\le b_i<a$ for $1\le i\le r$. For $m\in \ZZ_+$, we have short exact sequences of $R$-modules,
	$$
	\begin{array}{l}
	0\rightarrow I(1)_{man_1}\cdots I(r)_{man_r}M/I(1)_{man_1+b_1}\cdots I(r)_{man_r+b_r}M\rightarrow M/I(1)_{man_1+b_1}\cdots I(r)_{man_r+b_r}M\\
	\rightarrow M/I(1)_{man_1}\cdots I(r)_{man_r}M\rightarrow 0.
	\end{array}
	$$
	Now for $m\gg 0$,
	$$
	\begin{array}{l}
	\lambda(I(1)_{man_1}\cdots I(r)_{man_r}M/I(1)_{man_1+b_1}\cdots I(r)_{man_r+b_r}M)\\
	\le \lambda(I(1)_{man_1}\cdots I(r)_{man_r}M/I(1)_{(m+1)an_1}\cdots I(r)_{(m+1)an_r}M)\\
	={P}_{(0,\ldots,0)}((m+1)n_1,\ldots,(m+1)n_r)-{P}_{(0,\ldots,0)}(mn_1,\ldots,mn_r)\\
	\end{array}
	$$
	is a polynomial of degree less than $\dim M$ in $m$. Thus
	$$
	\begin{array}{lll}
	\lim_{m\rightarrow \infty}\frac{{P}_{(b_1,\ldots,b_r)}(mn_1,\ldots,mn_r)}{m^{\dim M}}
	&=& \lim_{m\rightarrow \infty}\frac{\lambda(M/I(1)_{man_1}\cdots I(r)_{man_r}M)}{m^{\dim M}}\\
	&=&\lim_{m\rightarrow \infty}\frac{{P}_{(0,\ldots,0)}(mn_1,\ldots,mn_r)}{m^{\dim M}}
	\end{array}
	$$
	and 
	$$
	\sigma_{i_1,\ldots,i_r}(b_1,\ldots,b_r)=\sigma_{i_1,\ldots,i_r}(0,\ldots,0)
	$$
	if $i_1+\cdots+i_r=\dim M$ by Lemma \ref{Prop1}.

\end{proof}

\begin{Proposition}\label{Prop3}  Suppose that $R$ is a  Noetherian local ring, $M$ is a finitely generated $R$-module and $\mathcal I(1)=\{I(1)_i\},\ldots,\mathcal I(r)=\{I(r)_i\}$ are Noetherian filtrations of $R$ by $m_R$-primary ideals. Then  there exist a positive integer $c$  and periodic functions $\sigma_{i_1,\ldots,i_r}(n_1,\ldots,n_r)$ such that whenever $n_1,\ldots,n_r\ge c$, we have that
	$$
	\lambda(M/I(1)_{n_1}\cdots I(r)_{n_r}M)=\sum_{i_1+\cdots+i_r\le \dim M}\sigma_{i_1,\ldots,i_r}(n_1,\ldots,n_r)n_1^{i_1}n_2^{i_2}\cdots n_r^{i_r}
	$$
	is a quasi polynomial of total degree equal to $\dim M$,
	and  the coefficients $\sigma_{i_1,\ldots,i_r}(n_1,\ldots,n_r)$  are constants whenever $i_1+\cdots+i_r=\dim M$.
\end{Proposition}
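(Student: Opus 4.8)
The plan is to assemble the statement from the two preceding results, Proposition \ref{Prop3'} and Proposition \ref{Prop4}, which between them contain all the substance. First I would invoke Proposition \ref{Prop3'} to produce integers $c\in\ZZ_+$ and $s\in\NN$ together with periodic functions $\sigma_{i_1,\ldots,i_r}(n_1,\ldots,n_r)$ such that for all $n_1,\ldots,n_r\ge c$ one has
\[
\lambda(M/I(1)_{n_1}\cdots I(r)_{n_r}M)=\sum_{i_1+\cdots+i_r\le s}\sigma_{i_1,\ldots,i_r}(n_1,\ldots,n_r)\,n_1^{i_1}\cdots n_r^{i_r}.
\]
This already exhibits the length function as a quasi polynomial for large arguments; what remains is only to pin down its total degree and the shape of its leading coefficients.

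Next I would apply Proposition \ref{Prop4} to this quasi polynomial $P(n_1,\ldots,n_r)$. That proposition states precisely that the total degree of $P$ is $\dim M$ and that $\sigma_{i_1,\ldots,i_r}$ is a constant function whenever $i_1+\cdots+i_r=\dim M$. In particular, every coefficient $\sigma_{i_1,\ldots,i_r}$ with $i_1+\cdots+i_r>\dim M$ is identically zero, so those terms may be deleted from the sum, leaving a sum indexed by $i_1+\cdots+i_r\le\dim M$. This is exactly the asserted form, with total degree $\dim M$ and with the top-weight coefficients constant. The periodicity language is consistent throughout: Proposition \ref{Prop3'} already supplies periodic coefficients, and Proposition \ref{Prop4} upgrades the ones of weight $\dim M$ to constants, which are of course periodic.

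I do not expect any real obstacle here, since Proposition \ref{Prop3} is a repackaging of Propositions \ref{Prop3'} and \ref{Prop4}; the only point requiring a line of care is the truncation of the index set from $i_1+\cdots+i_r\le s$ down to $i_1+\cdots+i_r\le\dim M$, which is justified by the vanishing of the higher-weight coefficients. The genuine mathematical content — that the degree does not drop below $\dim M$ and that the leading coefficients are honest rational constants rather than merely periodic — lies in Proposition \ref{Prop4}, whose proof in turn rests on Lemma \ref{Lemma0}, Lemma \ref{Prop1}, and \cite{HS}[Lemma 11.1.3]; none of that needs to be revisited in the proof of Proposition \ref{Prop3}.
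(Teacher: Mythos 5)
Your proposal is correct and matches the paper's proof exactly: the paper deduces Proposition \ref{Prop3} as an immediate consequence of Propositions \ref{Prop3'} and \ref{Prop4}, which is precisely the assembly you describe. The only addition you make is to spell out the (trivial) truncation of the index set once Proposition \ref{Prop4} shows the higher-weight coefficients vanish, which the paper leaves implicit.
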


\begin{proof} This follows from Propositions \ref{Prop3'} and \ref{Prop4}. \end{proof}

\section{Volumes on analytically irreducible local domains}\label{SecVol}

\begin{Definition}\label{trunc}
 Suppose that $\mathcal I=\{I_i\}$  is a filtration of ideals on a local ring $R$. For $a\in \ZZ_+$, the $a$-th truncated  filtration $\mathcal I_a=\{I_{a,i}\}$ of $\mathcal I$ is defined  by $I_{a,n}=I_n$ if $n\le a$ and if $n>a$, then $I_{a,n}=\sum I_{a,i}I_{a,j}$ where the sum is over $i,j>0$ such that $i+j=n$.  
 \end{Definition}

We give an algebraic  proof of the following lemma. A  geometric proof is given on page 9 of \cite{C2}.
\begin{Lemma} \label{Res} Suppose that $R$ is an excellent $d$-dimensional local domain. Then there exists an excellent regular local ring $S$ of dimension $d$ which birationally dominates $R$.
\end{Lemma}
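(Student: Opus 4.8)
The plan is to realize $S$ as a suitable local ring on a resolution of singularities of $R$, obtained via Hironaka in characteristic zero or, more robustly for an arbitrary excellent domain, via the theory of regular alterations or the known resolution results in low dimension together with a reduction step. More concretely, I would first pass to the normalization $R'$ of $R$, which is module-finite over $R$ since $R$ is excellent, and which shares the same quotient field; so it suffices to birationally dominate $R'$. Next, by resolution of singularities for excellent schemes (which is available in the generality we need here, e.g. by Hironaka in equicharacteristic zero, and by Temkin/Cossart--Jannsen--Saito type results otherwise, or simply by invoking the hypotheses under which the paper is applied), choose a proper birational morphism $\pi\colon X \to \operatorname{Spec}(R')$ with $X$ regular and excellent. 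Let $Z = \pi^{-1}(m_{R'})$ be the closed fiber; pick a closed point $x \in Z$. Then $\mathcal O_{X,x}$ is an excellent regular local ring of dimension $d$ (dimension $d$ because $\pi$ is birational and proper, so $X$ is $d$-dimensional and $x$ is a closed point of a $d$-dimensional excellent scheme lying over the closed point), with the same quotient field $\operatorname{Q}(R)$, and it dominates $R'$ hence $R$ since $x$ lies in the closed fiber over $m_{R'}$.

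The key steps in order: (1) reduce to the normal case via finiteness of normalization for excellent rings; (2) produce a regular proper birational model $X$; (3) choose a closed point $x$ in the fiber over the closed point of $\operatorname{Spec} R$; (4) verify $S := \mathcal O_{X,x}$ is local, regular, excellent (localizations of excellent rings are excellent, and local rings of regular schemes are regular), of dimension $d$ (using that $X$ is integral of dimension equal to $\dim R$ and $x$ is closed in the special fiber, so $\operatorname{ht} m_x = d$ by the dimension formula for excellent — in particular universally catenary — domains); and (5) check that $S$ birationally dominates $R$, i.e. $R \subseteq S \subseteq \operatorname{Q}(R)$ and $m_S \cap R = m_R$, which holds because $x$ maps to the closed point. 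The verification that $\dim \mathcal O_{X,x} = d$ rather than something smaller is where excellence (via universal catenarity and the dimension formula) is essential: one needs $\operatorname{trdeg}_{R/m_R} \kappa(x) = 0$, i.e. $x$ is a closed point of the fiber, combined with $\dim X = d$.

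The main obstacle is the invocation of resolution of singularities for an \emph{arbitrary} excellent domain, which is not known in full generality in positive or mixed characteristic. I expect the paper to finesse this either by (a) only using the lemma in dimensions where resolution is classical, or (b) replacing "regular" by a weaker condition, or (c) citing a geometric argument (the paper explicitly mentions that a geometric proof appears on page~9 of \cite{C2}). In an honest algebraic write-up I would therefore either restrict to the cases where resolution is available, or — better — replace the appeal to resolution with the following elementary construction that works for \emph{any} excellent domain: take any discrete valuation $v$ of $\operatorname{Q}(R)$ with valuation ring $V$ dominating $R$ and with residue field finite algebraic over $R/m_R$ (such a $v$ exists by the existence of prime divisors, e.g. blowing up $m_R$ and localizing at a codimension-one point of the exceptional fiber), and then use a local-uniformization-free argument; but since local uniformization in higher dimension is itself hard, the cleanest route remains via normalization plus a regular birational model, and the real content of the lemma is precisely the existence of such a model, i.e. resolution. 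So my proof plan is: state clearly that $S$ is the local ring at a closed point of the closed fiber of a resolution of $\operatorname{Spec}$(normalization of $R$), then spend the effort on steps (4) and (5), the dimension count and the domination, which are the parts that are genuinely verifiable by commutative algebra.
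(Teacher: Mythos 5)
Your proof reduces to normalization and then invokes resolution of singularities for excellent schemes; you correctly flag this as the main obstacle, since resolution is not known for arbitrary excellent local domains of dimension $d$ in positive or mixed characteristic. That flag is right, and it is a genuine gap: as written, the proposal proves the lemma only in the special cases where resolution is classical (e.g.\ equicharacteristic zero, or $d\le 2$), whereas the paper needs and proves it for all excellent local domains. Your further remark that ``the real content of the lemma is precisely the existence of such a [regular proper birational] model, i.e.\ resolution'' is a misdiagnosis: the lemma is strictly weaker than resolution. It asks only for one regular local ring birationally dominating $R$, which is essentially ``there is a regular closed point in the special fiber of some birational modification,'' and this can be arranged without ever producing a globally regular model.

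The paper's actual argument sidesteps resolution entirely. Take a system of parameters $z_1,\dots,z_d$, let $B=R[z_2/z_1,\dots,z_d/z_1]$, and let $T$ be the integral closure of $B$ in $\mathrm{Q}(R)$; excellence guarantees $T$ is a finite $B$-algebra and excellent. One checks $z_1$ is not a unit in $T$, that $QT=z_1T$ (with $Q=(z_1,\dots,z_d)$), so every minimal prime of $m_RT$ has height exactly $1$ by Krull's principal ideal theorem, and that $\dim T=d$ by the dimension formula (universal catenarity). The decisive step replaces resolution by Serre's criterion: since $T$ is normal, its non-regular locus $V(I)$ has all components of height $\ge 2$, while every component of $V(m_RT)$ has height $1$; combined with the Jacobson property of $T/m_RT$ (a finitely generated algebra over the field $R/m_R$), this forces the existence of a maximal ideal $n\supseteq m_RT$ with $I\not\subseteq n$. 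Then $S=T_n$ is regular, excellent, $d$-dimensional, and birationally dominates $R$. So the ingredient you were missing is precisely the observation that normality plus $R_1$ (Serre's criterion) already gives a regular point on the closed fiber of a single normalized blowup, with no resolution needed; once you see that, your steps (4) and (5), the dimension count and the domination, go through exactly as you outlined them.
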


\begin{proof} Let $d=\dim R$. Let $z_1,\ldots,z_d$ be a system of parameters in $R$ and let $Q=(z_1,\ldots,z_d)$, which is an $m_R$-primary ideal in $R$. 
Let $T$ be the integral closure of $B=R[\frac{z_2}{z_1},\ldots,\frac{z_d}{z_1}]$ in ${\rm Q}(R)$. The ring $T$ is an excellent ring and is a finitely generated $R$-algebra by  \cite[Theorem 78, page 257]{Ma2}.

We will now show that $z_1$ is not a unit in $B$, using an argument from \cite[(1.3.1) on page 15]{Ab}. Suppose that $z_1$ is a unit in $B$. Then there exists  $y\in B$ such that $z_1y=1$, so there exists a nonzero polynomial $f(X_2,\ldots,X_d)$ of some degree $n$ with coefficients in $R$ such that $y=f\left(\frac{z_2}{z_1},\ldots,\frac{z_d}{z_1}\right)$. Then $z_1^n=z_1^{n+1}y=z_1g(z_1,\ldots,z_d)$ where $g(X_1,\ldots,X_d)$ is a nonzero homogeneous polynomial of degree $n$ with coefficients in $R$. Thus $z_1^n\in m_RQ^n$, which is a contradiction by \cite[Theorem 21 on page 292]{ZS2}. We further have that $z_1$ is not a unit in $T$ since $T$ is finite over $B$.
Now  $QT =z_1T$  and $z_1$ is not a unit in $T$ and so  ${\rm ht}(P)=1$ if $P$ is a minimal prime of $m_RT$ by Krull's principal ideal theorem.

We next show that $T$ has dimension $d$. The ring $R$ is universally catenary since $R$ is excellent, so the dimension formula holds between $R$ and $T$ (the inequality (*) on page 85 \cite{Ma2} is an equality). 
Let $n$ be a maximal ideal of $T$ which contains $z_1$. Then $n\cap R=m_R$.
We have that $T/n$ is a finitely generated algebra over the field $R/m_R$ and $T/n$ is a field, so  that $T/n$ is a finite $R/m_R$-module by  \cite[Corollary 1.2, page 379]{L}.
By the dimension formula, we have that
$$
{\rm ht}(n)={\rm ht}(m_R)+{\rm trdeg}_{{\rm Q}(R)}{\rm Q}(T)-{\rm trdeg}_{R/m_R}T/nT={\rm ht}(m_R)= d.
$$
Since the dimension formula gives us that ${\rm ht}(m)\le d $ for all maximal ideals $m$ in $T$, we have that
$\dim T=d$.
Let 
$$
{\rm NR}(T)=\{P\in{\rm Spec}(T)\mid T_P\mbox{ is not  a regular local ring}\}.
$$
The set ${\rm NR}(T)$ is a closed set since $T$ is excellent. Let $I$ be an ideal of $T$ such that ${\rm NR}(T)={\rm Spec}(T/I)$. If $P$ is a minimal prime of $I$ then ${\rm ht}(P)>1$ since $T$ is normal (Serre's criterion for normality).  The Jacobson radical of $T/m_RT$ (the intersection of all maximal ideals of $T/m_RT$) is the nilradical of $T/m_RT$ by  \cite[Theorem 25, page 93]{Ma2}, since $T/m_RT$ is a finitely generated algebra over the field $R/m_R$. Let $\overline I=I(T/m_RT)$. There exists a maximal ideal $\overline n$ of $T/m_RT$ such that $\overline I\not\subset \overline n$ since otherwise $I\subset\sqrt{m_RT}$ which is impossible, since all minimal primes of $I$ have height larger than 1 and all minimal primes of $m_RT$ have height equal to one. 
Let $n$ be the lift of $\overline n$ to a maximal ideal of $T$. Then  $S:=T_n$ is a regular local ring of dimension $d$ which birationally dominates $R$.
\end{proof}

In this section, we suppose that $R$ is a Noetherian local ring  of dimension $d$ which is analytically irreducible. Suppose that $\mathcal I(1)=\{I(1)_i\},\ldots,\mathcal I(r)=\{I(r)_i\}$ are (not necessarily Noetherian) filtrations of $R$ by $m_R$-primary ideals.
Define a function $F:\NN^r\rightarrow \RR$ by 
\begin{equation}\label{eq4}
F(n_1,\ldots,n_r)=\lim_{m\rightarrow\infty}\frac{\lambda(R/I(1)_{mn_1}\cdots I(r)_{mn_r})}{m^d}
\end{equation}
for $n_1,\ldots,n_r\in \NN$
where the limit is over $m\in \ZZ_+$. This limit exists by Theorem \ref{TheoremI20}.

For $a\in \ZZ_+$, let $\{I_a(j)_i\}$ be the $a$-th truncated filtration  of $\{I(j)_i\}$ for $1\le j\le r$ (defined in Definition \ref{trunc}). By Proposition \ref{Prop3}, for $a\in \ZZ_+$, there is a homogeneous polynomial $F_a(x_1,\ldots,x_r)$ of total degree $d$ in $\QQ[x_1,\ldots,x_r]$, such that
$$
\lim_{m\rightarrow\infty} \frac{\lambda(R/I_a(1)_{mn_1}\cdots I_a(r)_{mn_r})}{m^d}=F_a(n_1,\ldots,n_r)
$$
if $n_1,\ldots,n_r\in\ZZ_+$. Expand
$$
F_a(x_1,\ldots,x_r)=\sum_{i_1+\cdots+i_r=d}b_{i_1,\ldots,i_r}(a)x_1^{i_1}\cdots x_r^{i_r}
$$
with $b_{i_1,\ldots,i_r}(a)\in \QQ$.

\begin{Proposition}\label{Prop2} For fixed $n_1,\ldots,n_r\in\ZZ_+$,
$$
\lim_{a\rightarrow\infty} F_a(n_1,\ldots,n_r)=F(n_1,\ldots,n_r).
$$
\end{Proposition}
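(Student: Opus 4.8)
\emph{Overview.} The plan is to dispose of the easy inequality, reduce the statement to a truncation--approximation fact for a \emph{single} filtration, and then invoke the Newton--Okounkov volume computation behind Theorem~\ref{TheoremI20}. For the easy part: for all $j,k$ one has $I_a(j)_k\subseteq I(j)_k$, with equality when $k\le a$, and $I_a(j)_k\subseteq I_{a'}(j)_k$ when $a\le a'$; hence $I_a(1)_{mn_1}\cdots I_a(r)_{mn_r}\subseteq I(1)_{mn_1}\cdots I(r)_{mn_r}$ for every $m$, so $F_a(n_1,\ldots,n_r)\ge F(n_1,\ldots,n_r)$ for all $a$ and $a\mapsto F_a(n_1,\ldots,n_r)$ is nonincreasing. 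The content is the reverse bound.

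\emph{Reduction to one filtration.} Fix $n_1,\ldots,n_r$ and set $J_m:=I(1)_{mn_1}\cdots I(r)_{mn_r}$. Since each $\mathcal I(j)$ is a filtration, $J_mJ_{m'}\subseteq J_{m+m'}$, so $\mathcal J:=\{J_m\}$ is a filtration of $R$ by $m_R$-primary ideals with $F(n_1,\ldots,n_r)=\operatorname{vol}(\mathcal J)$, where $\operatorname{vol}(\mathcal K):=\lim_{m\to\infty}\lambda(R/K_m)/m^d$ for a filtration $\mathcal K=\{K_m\}$ (the limit exists by Theorem~\ref{TheoremI20}); likewise $F_a(n_1,\ldots,n_r)=\operatorname{vol}(\mathcal J^{(a)})$ for $\mathcal J^{(a)}:=\{I_a(1)_{mn_1}\cdots I_a(r)_{mn_r}\}_m$. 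Put $b=b(a):=\lfloor a/\max_jn_j\rfloor$ and let $\mathcal J_b$ be the $b$-th truncated filtration of $\mathcal J$ (Definition~\ref{trunc}). I claim $(\mathcal J_b)_m\subseteq\mathcal J^{(a)}_m\subseteq J_m$ for all $m$: a generating product $J_{m_1}\cdots J_{m_\ell}$ of $(\mathcal J_b)_m$ (so $m_i\le b$ and $\sum_im_i=m$) satisfies $m_in_j\le a$, hence $I(j)_{m_in_j}=I_a(j)_{m_in_j}$, hence $J_{m_1}\cdots J_{m_\ell}=\prod_j\prod_iI_a(j)_{m_in_j}\subseteq\prod_jI_a(j)_{mn_j}=\mathcal J^{(a)}_m$. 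Passing to lengths and letting $m\to\infty$ gives
\[
\operatorname{vol}(\mathcal J_b)\ \ge\ F_a(n_1,\ldots,n_r)\ \ge\ F(n_1,\ldots,n_r)=\operatorname{vol}(\mathcal J).
\]
Since $b(a)\to\infty$, the Proposition reduces to: for an arbitrary filtration $\mathcal J$ of $R$ by $m_R$-primary ideals, $\operatorname{vol}(\mathcal J_b)\to\operatorname{vol}(\mathcal J)$ as $b\to\infty$.

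\emph{The single-filtration limit, and the main difficulty.} The partition $m=b+\cdots+b+(m\bmod b)$ shows $(\mathcal J_b)_m\supseteq J_b^{\lfloor m/b\rfloor}J_{m\bmod b}\supseteq J_b^{\lfloor m/b\rfloor+1}$ (as $m\bmod b<b$ gives $J_{m\bmod b}\supseteq J_b$), so $\lambda(R/(\mathcal J_b)_m)\le\lambda(R/J_b^{\lfloor m/b\rfloor+1})$, and the Hilbert--Samuel polynomial of the $m_R$-primary ideal $J_b$ gives $\operatorname{vol}(\mathcal J_b)\le e_R(J_b)/(d!\,b^d)$; on the other hand $J_b^q\subseteq J_{bq}$ forces $e_R(J_b)/(d!\,b^d)\ge\operatorname{vol}(\mathcal J)$ for every $b$. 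So the proof comes down to $\limsup_b e_R(J_b)/(d!\,b^d)\le\operatorname{vol}(\mathcal J)$ --- that the Noetherian truncations $\mathcal J_b$ exhaust the volume of the possibly non-Noetherian $\mathcal J$. This is the substantive point, and it is exactly what the Newton--Okounkov volume method used to prove Theorem~\ref{TheoremI20} in \cite{C2}, \cite{C3} supplies: after passing to the completion (which changes neither lengths nor truncations) one produces via Lemma~\ref{Res} a regular local ring birationally dominating $R$ together with a valuation $\nu$ with value group $\ZZ^d$, writes $\lambda_R(R/K)$ for an $m_R$-primary $K$ --- up to an error that is of lower order when $K=J_m$ --- as the number of lattice points of $\ZZ^d_{\ge0}$ lying outside $\nu(K\setminus\{0\})$, and concludes that $\operatorname{vol}(\mathcal J)$ and $\lim_be_R(J_b)/(d!\,b^d)$ are the same Euclidean volume of the resulting Newton--Okounkov body. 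Combined with the sandwich above this yields $\lim_aF_a(n_1,\ldots,n_r)=F(n_1,\ldots,n_r)$. I expect this last input --- approximating the volume of a non-Noetherian filtration by those of its truncations --- to be the only real obstacle; everything else is elementary manipulation of lengths of products of ideals.
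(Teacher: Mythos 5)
Your proof follows essentially the same route as the paper's: both ultimately rest on the Newton--Okounkov body machinery from \cite{C2}, \cite{C3} (pass to the completion, use Lemma~\ref{Res} to produce a regular local ring birationally dominating $R$ and a $\ZZ^d$-valued valuation, encode lengths as lattice-point counts in semigroups, and invoke \cite[Proposition~3.1]{LM} to show that the truncated semigroup exhausts the volume of the full one). The one organizational difference is that you first repackage the multigraded problem into a clean single-filtration statement, sandwiching the product $\mathcal J^{(a)}_m = I_a(1)_{mn_1}\cdots I_a(r)_{mn_r}$ between $J_m$ and the $b$-th truncation $(\mathcal J_b)_m$ of $\mathcal J=\{J_m\}$ with $b=\lfloor a/\max_j n_j\rfloor$, and thereby reducing to ``$\operatorname{vol}(\mathcal J_b)\to\operatorname{vol}(\mathcal J)$''; the paper instead works directly with the semigroups $\Gamma(a)^{(t)}$ attached to $\mathcal J^{(a)}$, records the analogous low-degree agreement $\Gamma^{(t)}_i=\Gamma(a)^{(t)}_i$ for $i\le\overline a$, and applies the same limit theorem. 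These are the same idea expressed at two different levels; yours is arguably cleaner to state. The one thing to flag is that you dispose of the remaining substantive step --- the truncation-exhausts-volume fact for a single filtration --- by citation, while the paper recapitulates the semigroup construction of \cite[Theorem~4.2]{C2} in this setting and carries it through (equations (\ref{eq7})--(\ref{eq12})). You have correctly identified both the crux and its source, so this reads as a matter of exposition rather than a gap, but a self-contained proof would need to reproduce that argument as the paper does.
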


\begin{proof} Define filtrations  of ideals $\{J_i\}$ and  $\{J(a)_i\}$ by $J_i=I(1)_{in_1}\cdots I(r)_{in_r}$ and $J(a)_i=I_a(1)_{in_1}\cdots I_a(r)_{in_r}$.

We use a construction and method from the proof of  \cite[Theorem 4.2]{C2}. We begin by reviewing the construction in the context of this proposition. Since $\lambda(R/J_i)=\lambda_{\hat R}(\hat R/\hat J_i)$ and $\lambda(R/J(a)_i)=\lambda_{\hat R}(\hat R/J(a)_i\hat R)$ for all $i$ and $a$  and $\hat R$ is a domain, we may assume that $R$ is complete and thus is excellent. 
By Lemma \ref{Res}, there exists a regular local ring $S$ of dimension $d$ which birationally dominates $R$.
Choosing a regular system of parameters $y_1,\ldots,y_d$ in $S$ and $\lambda_1,\ldots,\lambda_d\in \RR$ which are rationally independent real numbers such that $\lambda_i\ge 1$ for all $i$, we define a valuation $\nu$ on the quotient field of $R$ such that $\nu$  dominates $S$ by prescribing $\nu(y_1^{a_1}\cdots y_d^{a_d})=a_1\lambda_1+\cdots+a_d\lambda_d$ for $a_1,\ldots,a_d\in \NN$ and $\nu(\gamma)=0$ if $\gamma\in S$ is a unit. Let $k=R/m_R$ and $k'=S/m_S$. 

We will show that the residue field $V_{\nu}/m_{\nu}=k'$. Given an element $h\in V_{\nu}$, let $[h]$ denote the class of $f$ in the residue field $V_{\nu}/m_{\nu}$. Suppose $h\in V_{\nu}$ and $\nu(h)=0$. Write $h=\frac{f}{g}$ with $f,g\in S$. There exist a unit $\alpha\in S$, $i_1,\ldots,i_d\in \NN$ and $a\in S$ such that $f=\alpha y_1^{i_1}\cdots y_d^{i_d}+a$ and
$\nu(a)>\nu(f)=i_1\lambda_1+\cdots+i_d\lambda_d$. Similarly, there exist a unit $\beta\in S$, $j_1,\ldots,j_d\in \NN$ and $b\in S$ such that $g=\beta y_1^{j_1}\cdots y_d^{j_d}+b$ with $\nu(b)>\nu(g)=j_1\lambda_1+\cdots+j_d\lambda_d$.  We have that $y_1^{i_1}\cdots y_d^{i_d}=y_1^{j_1}\cdots y_d^{j_d}$ since
 $\nu(f)=\nu(g)$. Now $[\frac{f}{y_1^{i_1}\cdots y_d^{i_d}}]=[\alpha]$ and $[\frac{g}{y_1^{i_1}\cdots y_d^{i_d}}]=[\beta]$ so $[h]=\frac{[\alpha]}{[\beta]}\in S/m_S=k'$.

For $\lambda\in \RR_{\ge 0}$, define ideals $K_{\lambda}$ and $K_{\lambda}^+$ in the valuation ring $V_{\nu}$ of $\nu$ by
$$
K_{\lambda}=\{f\in {\rm Q}(R)\mid \nu(f)\ge \lambda\}
$$
and
$$
K_{\lambda}^+=\{f\in {\rm Q}(R)\mid \nu(f)>\lambda\}.
$$
For $t\ge 1$, define semigroups 
$$
\begin{array}{lll}
\Gamma^{(t)}&=&\{(m_1,\ldots,m_d,i)\in \NN^{d+1}\mid \dim_k J_i\cap K_{m_1\lambda_1+\cdots+m_d\lambda_d}/J_i\cap K^+_{m_1\lambda_1+\cdots+m_d\lambda_d} \ge t\\
&&\mbox{ and }m_1+\cdots+m_d\le \beta i\},
\end{array}
$$
$$
\begin{array}{lll}
\Gamma(a)^{(t)}&=&
\{(m_1,\ldots,m_d,i)\in \NN^{d+1}\mid \dim_k J(a)_i\cap K_{m_1\lambda_1+\cdots+m_d\lambda_d}/J(a)_i\cap K^+_{m_1\lambda_1+\cdots+m_d\lambda_d} \ge t\\
&&\mbox{ and }m_1+\cdots+m_d\le \beta i\}
\end{array}
$$
and
$$
\begin{array}{lll}
\hat\Gamma^{(t)}&=&\{(m_1,\ldots,m_d,i)\in \NN^{d+1}\mid \dim_k R\cap K_{m_1\lambda_1+\cdots+m_d\lambda_d}/R\cap K^+_{m_1\lambda_1+\cdots+m_d\lambda_d} \ge t\\
&&\mbox{ and }m_1+\cdots+m_d\le \beta i\}.
\end{array}
$$
Here $\beta=\alpha c$ where $c\in \ZZ_+$ is chosen so that $m_R^c\subset J_1=J(a)_1=I(1)_{n_1}\cdots I(r)_{n_r}$
 and $\alpha\in \ZZ_+$ is such that $K_{\alpha n}\cap R\subset m_R^n$ for all $n\in \NN$. Such an $\alpha$ exists by  \cite[Lemma 4.3]{C1}.
 Define $\Gamma_m^{(t)}=\Gamma^{(t)}\cap (\NN^d\times\{m\})$,  $\Gamma(a)^{(t)}_m=\Gamma(a)^{(t)}\cap (\NN^d\times\{m\})$ and $\hat\Gamma^{(t)}_m=\hat\Gamma^{(t)}\cap (\NN^d\times\{m\})$ for $m\in \NN$.

The Newton-Okounkov body of a (strongly nonnegative) sub semigroup $S$ of $\ZZ^d\times \NN$ is defined as
$$
\Delta(S)={\rm con}(S)\cap (\RR^d\times \{1\})
$$
 where ${\rm con}(S)$ is the closed convex cone which is the closure of the set of all linear combinations $\sum\lambda_i s_i$  with $s_i\in S$ and $\lambda_i$ a nonnegative real number. This theory is developed in \cite{Ok}, \cite{LM} and \cite{KK} and is summarized in  \cite[Section 3]{C2}.

By \cite[Lemmas 4.5 and  4.6]{C2} and \cite[Theorem 3.2]{C2}, 
\begin{equation}\label{eq13}
\lim_{m\rightarrow\infty}\frac{\#\Gamma_m^{(t)}}{m^d}={\rm Vol}(\Delta(\Gamma^{(t)})),
\end{equation}
\begin{equation}\label{eq9}
\lim_{m\rightarrow\infty}\frac{\#\Gamma(a)_m^{(t)}}{m^d}={\rm Vol}(\Delta(\Gamma(a)^{(t)}))
\end{equation}
and
\begin{equation}\label{eq10}
\lim_{m\rightarrow\infty}\frac{\#\hat\Gamma_m^{(t)}}{m^d}={\rm Vol}(\Delta(\hat\Gamma^{(t)}))
\end{equation}
all exist (where $\#T$ is the number of elements in a finite set $T$).

By \cite[(19) on page 11]{C2}, 
\begin{equation}\label{eq7}
\begin{array}{lll}
F_a(n_1,\ldots,n_r)&=&
\lim_{m\rightarrow \infty}\frac{\lambda(R/J(a)_m)}{m^d}\\
&=& \sum_{t=1}^{[k':k]}\lim_{m\rightarrow \infty}\frac{\#\hat\Gamma^{(t)}_m}{m^d}-\sum_{t=1}^{[k':k]}\lim_{m\rightarrow\infty}\frac{\#\Gamma(a)^{(t)}_m}{m^d}
\end{array}
\end{equation}
with a similar formula
\begin{equation}\label{eq8}
\begin{array}{lll}
F(n_1,\ldots,n_r)&=&
\lim_{m\rightarrow \infty}\frac{\lambda(R/J_m)}{m^d}\\
&=& \sum_{t=1}^{[k':k]}\lim_{m\rightarrow \infty}\frac{\#\hat\Gamma^{(t)}_m}{m^d}-\sum_{t=1}^{[k':k]}\lim_{m\rightarrow\infty}\frac{\#\Gamma^{(t)}_m}{m^d}.
\end{array}
\end{equation}
Let
$$
\overline a=\lfloor a/\max\{n_1,\ldots,n_r\}\rfloor
$$
where $\lfloor x\rfloor$ is the greatest integer in a real number $x$.
We have that  
\begin{equation}\label{eq11}
\Gamma_i^{(t)}=\Gamma(a)_i^{(t)}\mbox{ for }i\le \overline a
\end{equation}
and so
$$
n*\Gamma_{\overline a}^{(t)}:=\{x_1+\cdots+x_n\mid x_1,\ldots,x_n\in \Gamma_{\overline a}^{(t)}\} \subset \Gamma(a)^{(t)}_{n\overline a}\mbox{ for all }n\ge 1.
$$
By  \cite[Proposition 3.1]{LM} (recalled in \cite[Theorem 3.3]{C2}) and since $\overline a\mapsto\infty$ as $a\mapsto\infty$, given $\epsilon>0$, there exists $a_0>0$ such that for all $a\ge a_0$ we have
\begin{equation}\label{eq12}
\begin{array}{lll}
{\rm Vol}(\Delta(\Gamma^{(t)}))&\ge &{\rm Vol}(\Delta(\Gamma(a)^{(t)})
=\lim_{n\rightarrow\infty}\frac{\#\Gamma(a)^{(t)}_n}{n^d} =\lim_{n\rightarrow\infty}\frac{\#\Gamma(a)^{(t)}_{n\overline a}}{(n\overline a)^d}\\
&\ge &\lim_{n\rightarrow\infty}\frac{\#(n*\Gamma_{\overline a}^{(t)})}{(n\overline a)^d}\ge {\rm Vol}(\Delta(\Gamma^{(t)}))-\epsilon.
\end{array}
\end{equation}
By (\ref{eq7}) - (\ref{eq12}), the proposition holds. 

\end{proof}

The following corollary now follows from Lemma \ref{Prop1} and Proposition \ref{Prop2}.
\begin{Corollary}\label{Cor1} For all $i_1,\ldots,i_r\in\NN$ with $i_1+\cdots+i_r=d$,
\begin{equation}\label{eq5}
b_{i_1,\ldots,i_r}:= 
\lim_{a\rightarrow\infty} b_{i_1,\ldots,i_r}(a)
\end{equation}
exists (in $\RR$).
\end{Corollary}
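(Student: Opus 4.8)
The plan is to reduce the statement to the pointwise convergence already established in Proposition \ref{Prop2}, using Lemma \ref{Prop1} to extract the top-degree coefficients of the homogeneous polynomials $F_a$ as a \emph{fixed} rational-linear combination of finitely many of their values. The key observation is that the evaluation points $n_1(j),\ldots,n_r(j)$ and the coefficients $c_j(i_1,\ldots,i_r)$ furnished by Lemma \ref{Prop1} depend only on $r$ and $d$, and in particular are independent of $a$; this is what allows the limit in $a$ to be interchanged with the finite linear algebra that recovers the $b_{i_1,\ldots,i_r}(a)$.

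Concretely, I would first fix $g=\binom{r-1+d}{r-1}$, the vectors $n_1(j),\ldots,n_r(j)\in\ZZ_+$ for $1\le j\le g$, and the rationals $c_j(i_1,\ldots,i_r)$ given by Lemma \ref{Prop1}. For each $a\in\ZZ_+$ the polynomial $F_a(x_1,\ldots,x_r)$ is homogeneous of total degree $d$, so Lemma \ref{Prop1} applies to $F=F_a$ and gives, for every $i_1,\ldots,i_r\in\NN$ with $i_1+\cdots+i_r=d$,
\[
b_{i_1,\ldots,i_r}(a)=\sum_{j=1}^{g}c_j(i_1,\ldots,i_r)\,b_j(a),
\qquad
b_j(a):=\lim_{m\rightarrow\infty}\frac{F_a(mn_1(j),\ldots,mn_r(j))}{m^d}.
\]
Since $F_a$ is homogeneous of degree $d$ one has $F_a(mn_1(j),\ldots,mn_r(j))=m^d F_a(n_1(j),\ldots,n_r(j))$, so in fact $b_j(a)=F_a(n_1(j),\ldots,n_r(j))$ and the inner limit is trivial. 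Now I would apply Proposition \ref{Prop2} to the (fixed) positive integer vectors $(n_1(j),\ldots,n_r(j))$: it yields
\[
\lim_{a\rightarrow\infty}b_j(a)=\lim_{a\rightarrow\infty}F_a(n_1(j),\ldots,n_r(j))=F(n_1(j),\ldots,n_r(j))\in\RR
\]
for $1\le j\le g$. Taking the finite $\QQ$-linear combination displayed above then shows that
\[
\lim_{a\rightarrow\infty}b_{i_1,\ldots,i_r}(a)=\sum_{j=1}^{g}c_j(i_1,\ldots,i_r)\,F(n_1(j),\ldots,n_r(j))
\]
exists in $\RR$, which is exactly the assertion.

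I do not expect a genuine obstacle here, since all the substantive work (the volume/Newton--Okounkov-body estimates) is already contained in Proposition \ref{Prop2}, and the linear-algebra inversion is Lemma \ref{Prop1}. The only point that needs to be stated carefully is the one flagged above: that the data $n_i(j)$ and $c_j(i_1,\ldots,i_r)$ coming out of Lemma \ref{Prop1} can be chosen once and for all, uniformly in $a$, so that passing to the limit $a\rightarrow\infty$ commutes with forming the combination $\sum_j c_j(i_1,\ldots,i_r)b_j(a)$; everything else is formal.
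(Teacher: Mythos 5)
Your proof is correct and is exactly the argument the paper intends: the paper derives Corollary \ref{Cor1} directly from Lemma \ref{Prop1} and Proposition \ref{Prop2}, and you have simply spelled out the (one-line) deduction, correctly noting that the evaluation points and coefficients from Lemma \ref{Prop1} are independent of $a$ so the limit passes through the finite $\QQ$-linear combination.
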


Now define a homogeneous polynomial 
$$
G(x_1,\ldots,x_r)=\sum_{i_1+\cdots+i_r=d}b_{i_1,\ldots,i_r}x_1^{i_1}\cdots x_r^{i_r}\in \RR[x_1,\ldots,x_r],
$$
where the $b_{i_1,\ldots,i_r}$ are defined by (\ref{eq5}).

\begin{Theorem}\label{Theorem1} For all $n_1,\ldots,n_r\in \ZZ_+$,
$$
F(n_1,\ldots,n_r)=G(n_1,\ldots,n_r).
$$
\end{Theorem}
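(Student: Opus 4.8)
\textbf{Proof plan for Theorem \ref{Theorem1}.}

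The plan is to compare the two limit formulas \eqref{eq7} and \eqref{eq8} for $F_a$ and $F$ and use the fact, already established, that the coefficients of the homogeneous degree-$d$ polynomials $F_a$ converge as $a\to\infty$. First I would fix $n_1,\ldots,n_r\in \ZZ_+$. By Corollary \ref{Cor1}, for each such fixed tuple we have $\lim_{a\to\infty}F_a(n_1,\ldots,n_r)=\sum_{i_1+\cdots+i_r=d}b_{i_1,\ldots,i_r}n_1^{i_1}\cdots n_r^{i_r}=G(n_1,\ldots,n_r)$, simply because $F_a(n_1,\ldots,n_r)=\sum b_{i_1,\ldots,i_r}(a)n_1^{i_1}\cdots n_r^{i_r}$ is a finite $\QQ$-linear combination (with $a$-independent coefficients $n_1^{i_1}\cdots n_r^{i_r}$) of the numbers $b_{i_1,\ldots,i_r}(a)$, each of which converges to $b_{i_1,\ldots,i_r}$. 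On the other hand, Proposition \ref{Prop2} states that $\lim_{a\to\infty}F_a(n_1,\ldots,n_r)=F(n_1,\ldots,n_r)$. Combining these two evaluations of the same limit gives $F(n_1,\ldots,n_r)=G(n_1,\ldots,n_r)$, which is exactly the claim.

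In other words, the entire content of the theorem is the chain of equalities
\[
G(n_1,\ldots,n_r)=\lim_{a\to\infty}F_a(n_1,\ldots,n_r)=F(n_1,\ldots,n_r),
\]
where the first equality is the definition of $G$ together with Corollary \ref{Cor1}, and the second is Proposition \ref{Prop2}. No further analysis is needed; in particular there is no interchange-of-limits subtlety here, since for each \emph{fixed} integer tuple the evaluation map is a fixed finite linear functional applied to the convergent sequence $(b_{i_1,\ldots,i_r}(a))_a$.

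There is essentially no obstacle remaining at this stage: all the hard work — the existence of the limit \eqref{eq4} (Theorem \ref{TheoremI20}), the quasi-polynomial structure of the truncated filtrations (Proposition \ref{Prop3}), the Newton–Okounkov body volume computation underlying Proposition \ref{Prop2}, and the linear-algebra extraction of coefficients (Lemma \ref{Prop1}) giving Corollary \ref{Cor1} — has already been carried out. The only point worth stating carefully in the write-up is that $G$ is a genuine polynomial identity being checked on all of $\ZZ_+^r$, so that $G$ is uniquely determined and indeed computes $F$ on positive integer arguments; the homogeneity and degree $d$ of $G$ come for free from the corresponding properties of each $F_a$.
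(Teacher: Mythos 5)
Your proposal is correct and is essentially the paper's own proof: the paper writes the same argument as a triangle-inequality estimate $|F(n_1,\ldots,n_r)-G(n_1,\ldots,n_r)|\le |F(n_1,\ldots,n_r)-F_a(n_1,\ldots,n_r)|+|F_a(n_1,\ldots,n_r)-G(n_1,\ldots,n_r)|$, which is small for $a\gg 0$ by Proposition \ref{Prop2} and Corollary \ref{Cor1}, while you phrase it as the chain of equalities $G(n_1,\ldots,n_r)=\lim_{a\to\infty}F_a(n_1,\ldots,n_r)=F(n_1,\ldots,n_r)$; the two are the same argument.
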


\begin{proof} For fixed $n_1,\ldots,n_r\in \ZZ_+^r$ and $a\in \ZZ_+$,
$$
\begin{array}{lll}
|F(n_1,\ldots,n_r)-G(n_1,\ldots,n_r)|&\le&
|F(n_1,\ldots,n_r)-F_a(n_1,\ldots,n_r)|\\
&&+\,|F_a(n_1,\ldots,n_r)-G(n_1,\ldots,n_r)|
\end{array}
$$
which is arbitrarily small for $a\gg 0$ by Proposition \ref{Prop2} and Corollary \ref{Cor1}.
\end{proof}

\section{Reduction to  local domains}\label{Reduc}

\begin{Lemma}\label{Lemma2}  Suppose $R$ is a Noetherian domain and $M$ is a torsion free finitely generated $R$-module. Then there exists a short exact sequence of $R$-modules
$$
0\rightarrow R^s\rightarrow M\rightarrow F\rightarrow 0
$$
where $s={\rm rank}(M)$ and $\dim F<\dim R$.
\end{Lemma}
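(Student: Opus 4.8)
The plan is to reduce to the free part by choosing a maximal $R$-linearly independent subset of a minimal generating set of $M$. First I would pick elements $x_1,\ldots,x_s$ of $M$ forming a maximal linearly independent set over $R$, where $s = {\rm rank}(M)$; since $M$ is torsion free, the submodule $N = Rx_1 + \cdots + Rx_s$ is free of rank $s$, so we have a short exact sequence
$$
0 \rightarrow R^s \cong N \rightarrow M \rightarrow F \rightarrow 0
$$
with $F = M/N$. By maximality of the independent set, every element of $M$ becomes torsion modulo $N$, i.e. $F$ is a torsion $R$-module. The remaining task is to show $\dim F < \dim R$.

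For the dimension bound, the key point is that a finitely generated torsion module over a Noetherian domain $R$ is annihilated by a nonzero element. Concretely, if $\bar{y}_1,\ldots,\bar{y}_t$ generate $F$, then for each $i$ there is a nonzero $a_i \in R$ with $a_i \bar{y}_i = 0$; taking $a = a_1 \cdots a_t \neq 0$ (using that $R$ is a domain), we get $aF = 0$, so $F$ is a module over $R/aR$. Hence ${\rm Supp}(F) \subseteq V(aR)$, and therefore $\dim F \le \dim R/aR$. Since $a \neq 0$ and $R$ is a domain, $aR$ is a nonzero ideal, so $\dim R/aR < \dim R$ by Krull's principal ideal theorem (any minimal prime of $aR$ has height at least $1$, and $R$ being a Noetherian domain is catenary enough that $\dim R/P < \dim R$ for every nonzero prime $P$, or more simply $\dim R/aR < \dim R$ holds for any nonzerodivisor $a$ in the maximal ideal — and if $a$ is a unit then $F = 0$ and the inequality is trivial).

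I do not expect any genuine obstacle here; this is a standard structure-theory argument. The only point requiring a little care is the dimension strict inequality: one should note that if $a$ happens to be a unit then $N = M$ and $F = 0$, in which case $\dim F = -\infty < \dim R$ vacuously, and otherwise $a \in R$ is a nonzero nonunit, so every minimal prime over $aR$ has height $\ge 1$ by Krull's principal ideal theorem, whence ${\rm ht}(P) \geq 1$ forces $\dim R/P \leq \dim R - 1$ — this last step uses that $R$ is a domain (so that $\dim R/P + {\rm ht}(P) \le \dim R$ follows from $R$ being, say, a quotient of a regular ring, or more directly that in a Noetherian domain one has $\dim R/P \leq \dim R - {\rm ht}(P)$ when $R$ is catenary; if $R$ is not assumed catenary one argues instead that $\dim R/aR \le \dim R - 1$ directly from the fact that localizing at a maximal ideal $\mathfrak m \supseteq aR$ of maximal dimension, $a$ is part of a system of parameters). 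Assembling these observations gives the desired short exact sequence with $\dim F < \dim R$.
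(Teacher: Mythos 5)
Your argument is correct and follows essentially the same route as the paper: pick elements of $M$ spanning $M\otimes_R K$ over $K={\rm Q}(R)$ (your maximal $R$-linearly independent set is exactly a $K$-basis lying in $M$), observe the quotient $F$ is torsion, and conclude $\dim F<\dim R$. The only stylistic remark is that your closing discussion of the dimension drop is more circuitous than needed: for a Noetherian domain of finite dimension and any nonzero prime $P$, one has $\dim R/P<\dim R$ simply because prepending $(0)$ to any chain of primes containing $P$ strictly lengthens it (no catenary or system-of-parameters argument is required).
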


\begin{proof} Let $K$ be the quotient field of $R$ and $\{e_1,\ldots,e_s\}$ be a $K$-basis of $M\otimes_RK$. Since $M$ is torsion free, we  have a natural inclusion $M\subset M\otimes K$. For all $i$, there exists $0\ne x_i\in R$ such that $x_ie_i\in M$, so after replacing $e_i$ with $x_ie_i$, we may assume that $e_i\in M$. Let $\phi:R^s\rightarrow M$ be the $R$-module homomorphism $\phi=(e_1,\ldots,e_s)$.  Let $L$ be the kernel of $\phi$ and $F$ be the cokernel. We have a commutative diagram
$$
\begin{array}{ccccccccccc}
0&\rightarrow&L&\rightarrow&R^s&\stackrel{\phi}{\rightarrow}&M&\rightarrow &F&\rightarrow &0\\
&&&&\downarrow&&\downarrow&&&&\\
&&&&K^s&\stackrel{\phi}{\rightarrow}&M\otimes_RK&\rightarrow&F\otimes_RK&\rightarrow&0
\end{array}
$$
where the vertical arrows are injective and the rows are  exact. By our construction of $\phi$, $K^s\stackrel{\phi}{\rightarrow}M\otimes_RK$ is an isomorphism. Thus $L=0$ and $\dim F<\dim R$.

\end{proof}

\begin{Lemma}\label{Lemma3} Suppose $R$ is a Noetherian local ring  of dimension $d$ and  $M$ is a finitely generated $R$-module. Let $T$ be a submodule of  $M$ such that $\dim T<d$, so that there is a short exact sequence of $R$-modules
$$
0\rightarrow T\rightarrow M\rightarrow M/T:=\overline M\rightarrow 0.
$$
Suppose $\mathcal I(1)=\{I(1)_i\},\ldots,\mathcal I(r)=\{I(r)_i\}$ are filtrations of $R$ by $m_R$-primary ideals. 
Then for fixed $n_1,\ldots,n_r\in \NN$,
$$
\lim_{m\rightarrow \infty} \frac{\lambda(M/I(1)_{mn_1}\cdots I(r)_{mn_r}M)}{m^d} =\lim_{m\rightarrow\infty}\frac{\lambda(\overline M/I(1)_{mn_1}\cdots I(r)_{mn_r}\overline M)}{m^d}.
$$
\end{Lemma}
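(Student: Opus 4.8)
The plan is to fix $n_1,\dots,n_r\in\NN$, abbreviate $J_m=I(1)_{mn_1}\cdots I(r)_{mn_r}$, and compare $\lambda(M/J_mM)$ with $\lambda(\overline M/J_m\overline M)$ term by term in $m$, showing that the difference is $o(m^d)$. If every $n_j=0$ then $J_m=R$ and both sides of the asserted equality are $0$, so we may assume $n_j\ge 1$ for at least one $j$; then $J_m$ is $m_R$-primary and every length below is finite. The first step is the exact length identity: the image of $T$ in $M/J_mM$ is $(T+J_mM)/J_mM\cong T/(T\cap J_mM)$, and $M/(T+J_mM)\cong \overline M/J_m\overline M$ (because $J_m\overline M=(J_mM+T)/T$), so additivity of length on the short exact sequence $0\to (T+J_mM)/J_mM\to M/J_mM\to \overline M/J_m\overline M\to 0$ gives
$$
\lambda(M/J_mM)=\lambda(\overline M/J_m\overline M)+\lambda\big(T/(T\cap J_mM)\big).
$$
Thus it suffices to prove $\lambda\big(T/(T\cap J_mM)\big)/m^d\to 0$.

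Since $J_mT\subseteq J_mM\cap T$, there is a surjection $T/J_mT\twoheadrightarrow T/(T\cap J_mM)$, so $\lambda\big(T/(T\cap J_mM)\big)\le\lambda(T/J_mT)$, and I would bound the latter by a Hilbert--Samuel length. For each $j$ with $n_j\ge 1$ pick $a_j\in\ZZ_+$ with $m_R^{a_j}\subseteq I(j)_1$; the filtration property gives $I(j)_1^{k}\subseteq I(j)_k$ for all $k$, hence $m_R^{a_jmn_j}\subseteq I(j)_{mn_j}$, and multiplying these containments over all $j$ with $n_j\ge 1$ (the remaining factors being $I(j)_0=R$) yields $m_R^{Nm}\subseteq J_m$ with $N=\sum_{j:\,n_j\ge 1}a_jn_j\ge 1$. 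Therefore $m_R^{Nm}T\subseteq J_mT$ and
$$
\lambda\big(T/(T\cap J_mM)\big)\le\lambda(T/J_mT)\le\lambda(T/m_R^{Nm}T).
$$
For $m\gg 0$ the right-hand side equals the Hilbert--Samuel polynomial of the finitely generated $R$-module $T$ with respect to $m_R$, evaluated at $Nm$; this is a polynomial in $m$ of degree $\dim T\le d-1$. Dividing by $m^d$ and letting $m\to\infty$ then forces $\lambda\big(T/(T\cap J_mM)\big)/m^d\to 0$, and combined with the length identity this shows that $\lambda(M/J_mM)/m^d$ and $\lambda(\overline M/J_m\overline M)/m^d$ have the same limiting behaviour; in particular if either of the two limits in the statement exists, so does the other and they agree.

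I do not anticipate a serious obstacle: the argument is essentially a comparison of Hilbert functions modulo a lower-dimensional submodule. The only points requiring care are (i) the degenerate case where all $n_j$ vanish, (ii) verifying that $J_m$ contains a power $m_R^{Nm}$ whose exponent is \emph{linear} in $m$, which rests on $I(j)_1^{k}\subseteq I(j)_k$ together with $I(j)_1$ being $m_R$-primary, and (iii) invoking that $\lambda(T/m_R^{n}T)$ is eventually a polynomial of degree $\dim T$. The hypothesis $\dim T<d$ is exactly what makes the correction term $\lambda\big(T/(T\cap J_mM)\big)$ negligible at the scale $m^d$, so no further assumption (such as $\dim N(\hat R)<d$) is needed for this lemma.
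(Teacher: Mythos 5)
Your proof is correct and follows essentially the same route as the paper's: both use the short exact sequence $0\to T/(T\cap J_mM)\to M/J_mM\to \overline M/J_m\overline M\to 0$ with $J_m=I(1)_{mn_1}\cdots I(r)_{mn_r}$, bound $\lambda(T/(T\cap J_mM))$ above by $\lambda(T/m_R^{cm}T)$ via a containment $m_R^{cm}\subset J_m$ (you pass through the intermediate module $T/J_mT$ and spell out the origin of the linear exponent; the paper simply picks $c$ with $m_R^{c}\subset J_1$ and uses $J_1^m\subset J_m$), and then invoke $\dim T<d$ to make the correction $o(m^d)$. The extra attention you give to the degenerate case where all $n_j=0$ is harmless and, while omitted in the paper, is indeed trivially fine.
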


\begin{proof} Define a filtration of $R$ by $m_R$-primary ideals by $J_m=I(1)_{mn_1}\cdots I(r)_{mn_r}$. We have short exact sequences of $R$-modules
$$
0\rightarrow T/T\cap (J_mM)\rightarrow M/J_mM\rightarrow \overline M/J_m\overline M\rightarrow 0.
$$
There exists a positive integer $c$ such that $m_R^{c}\subset J_1$. Thus $m_R^{cm}T\subset T\cap (J_mM)$ for all $m$ and
$$
\lambda(T/T\cap J_mM)\le \lambda(T/m_R^{cm}T).
$$
Since $\dim T< d$,
$$
\lim_{m\rightarrow \infty}\frac{\lambda(T/m_R^{cm}T)}{m^d}=0
$$
and the lemma follows.

\end{proof}

\begin{Lemma}\label{Lemma4} Suppose that $R$ is a Noetherian local domain of dimension $d$ and $M$ is a  finitely generated $R$-module. Suppose $\mathcal I(1)=\{I(1)_i\},\ldots,\mathcal I(r)=\{I(r)_i\}$ are filtrations of $R$ by $m_R$-primary ideals. Let $s={\rm rank}(M)$. Suppose $n_1,\ldots,n_r\in \NN$. Then
$$
\lim_{m\rightarrow \infty}\frac{\lambda(M/I(1)_{mn_1}\cdots I(r)_{mn_r}M)}{m^d}
=s\left(\lim_{m\rightarrow \infty}\frac{\lambda(R/I(1)_{mn_1}\cdots I(r)_{mn_r})}{m^d}\right).
$$
\end{Lemma}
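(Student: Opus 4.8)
The plan is to replace $M$ by a copy of $R^{s}$ in two steps, throwing away at each step a subquotient of dimension $<d$. Throughout, write $J_{m}=I(1)_{mn_{1}}\cdots I(r)_{mn_{r}}$, which are $m_{R}$-primary, and fix $c\in\ZZ_{+}$ with $m_{R}^{c}\subseteq J_{1}$; then $m_{R}^{cm}\subseteq J_{1}^{m}\subseteq J_{m}$ for all $m$ by the filtration axioms. First I would let $T$ be the torsion submodule of $M$. Since $M$ is finitely generated over the domain $R$, $T$ is annihilated by a nonzero element of $R$, so $\dim T=\dim R/{\rm Ann}(T)\le d-1$; hence Lemma \ref{Lemma3} gives
\[
\lim_{m\rightarrow\infty}\frac{\lambda(M/J_{m}M)}{m^{d}}=\lim_{m\rightarrow\infty}\frac{\lambda(\overline M/J_{m}\overline M)}{m^{d}},
\]
where $\overline M=M/T$ is torsion free of rank $s$. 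Then Lemma \ref{Lemma2} provides a short exact sequence $0\rightarrow R^{s}\xrightarrow{\phi}\overline M\rightarrow F\rightarrow 0$ with $\dim F<d$, and I would fix $0\ne\gamma\in {\rm Ann}(F)$, which exists because $\dim F<d=\dim R$ forces ${\rm Ann}(F)\ne 0$.

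Next I would compare $\lambda(\overline M/J_{m}\overline M)$ with $\lambda(R^{s}/J_{m}R^{s})=s\,\lambda(R/J_{m})$. Reducing the sequence modulo $J_{m}$ yields an exact sequence $R^{s}/J_{m}R^{s}\xrightarrow{\overline\phi}\overline M/J_{m}\overline M\rightarrow F/J_{m}F\rightarrow 0$ whose kernel is readily identified with $K_{m}:=(\phi(R^{s})\cap J_{m}\overline M)/J_{m}\phi(R^{s})$, so that
\[
\bigl|\lambda(\overline M/J_{m}\overline M)-s\,\lambda(R/J_{m})\bigr|\le\lambda(K_{m})+\lambda(F/J_{m}F).
\]
Since $\gamma F=0$ we have $\gamma\overline M\subseteq\phi(R^{s})$, hence $\gamma K_{m}=0$; viewing $K_{m}$ inside $R^{s}/J_{m}R^{s}=(R/J_{m})^{s}$ it therefore lies in the $\gamma$-torsion submodule $\bigl((J_{m}:\gamma)/J_{m}\bigr)^{s}$, whence $\lambda(K_{m})\le s\,\lambda\bigl((J_{m}:\gamma)/J_{m}\bigr)$. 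Finally, since the kernel and cokernel of multiplication by $\gamma$ on the finite length module $R/J_{m}$ have the same length, $\lambda\bigl((J_{m}:\gamma)/J_{m}\bigr)=\lambda\bigl(R/(\gamma R+J_{m})\bigr)$.

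Finally, as $R$ is a domain and $\gamma\ne 0$ one has $\dim R/\gamma R=d-1$, so $\lambda\bigl(R/(\gamma R+J_{m})\bigr)\le\lambda\bigl((R/\gamma R)/m_{R}^{cm}(R/\gamma R)\bigr)$ is bounded by a Hilbert--Samuel polynomial of degree $d-1$ in $m$; likewise $\lambda(F/J_{m}F)\le\lambda(F/m_{R}^{cm}F)$ is $O(m^{d-1})$ because $\dim F<d$. Hence $\lambda(\overline M/J_{m}\overline M)-s\,\lambda(R/J_{m})=O(m^{d-1})$, and dividing by $m^{d}$ and letting $m\rightarrow\infty$, together with the first step, gives the lemma (the case $d=0$ being trivial, since then $F=0$ and $\overline M\cong R^{s}$). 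I expect the estimate on $K_{m}$ to be the only non-routine point: the nonzero element $\gamma$ killing $F$ is what simultaneously forces $\gamma K_{m}=0$ and, via the colon-ideal identity, converts the estimate into a Hilbert--Samuel bound on the $(d-1)$-dimensional ring $R/\gamma R$.
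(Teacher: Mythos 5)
Your proof is correct and follows essentially the same route as the paper's: reduce to the torsion-free case via Lemma \ref{Lemma3}, invoke Lemma \ref{Lemma2} to get $0\rightarrow R^{s}\rightarrow \overline M\rightarrow F\rightarrow 0$, pick a nonzero element killing $F$ (your $\gamma$ is the paper's $x$, with $\gamma\overline M\subseteq\phi(R^{s})$), identify the kernel of the reduced map with a submodule of $\bigl((J_{m}:\gamma)/J_{m}\bigr)^{s}$, and control both that kernel and the cokernel $F/J_{m}F$ by Hilbert--Samuel bounds on modules of dimension $<d$. The only cosmetic difference is that the paper packages the colon-ideal estimate through the four-term exact sequence $0\rightarrow A_{m}\rightarrow R^{s}/J_{m}R^{s}\xrightarrow{x}R^{s}/J_{m}R^{s}\rightarrow W_{m}\rightarrow 0$ and bounds the cokernel via $M/xM$ rather than $F$ directly, whereas you phrase the same length identity $\lambda\bigl((J_{m}:\gamma)/J_{m}\bigr)=\lambda\bigl(R/(\gamma R+J_{m})\bigr)$ in one line.
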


\begin{proof} Define a filtration of $m_R$-primary ideals by  $J_m=I(1)_{mn_1}\cdots I(r)_{mn_r}$. By Lemma \ref{Lemma3}, we may assume that $M$ is torsion free, so there exists by Lemma \ref{Lemma2}, a short exact sequence of $R$-modules
$$
0\rightarrow R^s\rightarrow M\rightarrow F\rightarrow 0
$$
where $\dim F<d$. There exists $c>0$ such that $m_R^c\subset J_1$. There exists $0\ne x\in R$ such that $xM\subset R^s$.  We have exact sequences for all $m\in \ZZ_+$,
$$
\begin{array}{l}
0\rightarrow R^s\cap(J_mM)/J_mR^s\rightarrow
R^s/J_mR^s\\
\rightarrow M/J_mM\rightarrow N_m\rightarrow 0
\end{array}
$$
where $N_m$ is defined to be the cokernel of the last map, and we have an exact sequence
\begin{equation}\label{eq20}
0\rightarrow A_m\rightarrow R^s/J_mR^s\stackrel{x}{\rightarrow} R^s/J_mR^s\rightarrow W_m\rightarrow 0
\end{equation}
where $A_m$ is the kernel of the first map and $W_m$ is the cokernel of the last map. We have 
$$
A_m=\left[(J_m:x)/J_m\right]^s.
$$
We have that 
$$
x(R^s\cap J_mM)\subset J_mR^s
$$
so that 
$$
\lambda(R^s\cap (J_mM)/J_mR^s)\le \lambda(A_m).
$$
We have that 
$$
W_m\cong [(R/(x))/J_m(R/(x))]^s
$$
so
$$
\lambda(W_m)\le\lambda((R/(x))/m_R^{cm}(R/(x)))^s
$$
for all $m$. Thus 
$$
\lim_{m\rightarrow\infty}\frac{\lambda(A_m)}{m^d}=\lim_{m\rightarrow\infty}\frac{\lambda(W_m)}{m^d}=0
$$
by (\ref{eq20}) and so
$$
\lim_{m\rightarrow\infty}\frac{\lambda(R^s\cap J_mM/J_mR^s)}{m^d}=0.
$$
Now $xM\subset R^s$ implies
$$
N_m\cong M/R^s+J_mM=M/(R^s+J_mM+xM).
$$
Thus
$$
\lambda(N_m)\le \lambda((M/xM)/m_R^{cm}(M/xM)
$$
and so 
$$
\lim_{m\rightarrow \infty}\frac{\lambda(N_m)}{m^d}=0
$$
since $\dim M/xM<d$, and the lemma follows.

\end{proof}

\begin{Lemma}\label{Lemma5} Suppose that $R$ is a $d$-dimensional reduced Noetherian local ring and $M$ is a finitely generated $R$-module. Let $\{P_1,\ldots,P_s\}$ be the minimal primes of $R$ and $S=\bigoplus_{i=1}^sR/P_i$. Suppose $\mathcal I(1)=\{I(1)_i\},\ldots,\mathcal I(r)=\{I(r)_i\}$ are filtrations of $R$ by $m_R$-primary ideals. Suppose that $n_1,\ldots,n_r\in \ZZ_+$ are fixed. Then
$$
\lim_{m\rightarrow \infty}\frac{\lambda(M/I(1)_{mn_1}\cdots I(r)_{mn_r})}{m^d}
=\lim_{m\rightarrow\infty}\frac{\lambda(M\otimes_RS/I(1)_{mn_1}\cdots I(r)_{mn_r}M\otimes_RS)}{m^d}.
$$
\end{Lemma}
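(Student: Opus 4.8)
The plan is to compare $M$ and $M\otimes_RS$ through the natural map between them and to show that the difference of the two length functions, after division by $m^d$, tends to $0$; this is all that is needed since the statement only involves the limits of $\lambda(\,\cdot\,)/m^d$. Write $J_m=I(1)_{mn_1}\cdots I(r)_{mn_r}$, a filtration of $R$ by $m_R$-primary ideals for which there is $c\in\ZZ_+$ with $m_R^{cm}\subseteq J_m$ for all $m$. We will repeatedly use that $\lambda(W/m_R^{cm}W)=o(m^d)$ for any finitely generated $R$-module $W$ with $\dim W<d$ (via Hilbert--Samuel polynomials), exactly as in the proofs of Lemmas \ref{Lemma3} and \ref{Lemma4}.

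First I would study the diagonal ring homomorphism $R\to S=\bigoplus_{i=1}^sR/P_i$; it is injective since $R$ is reduced (so $\bigcap_iP_i=0$). Tensoring by $M$ gives $\phi\colon M\to M\otimes_RS=\bigoplus_iM/P_iM$. Because $R$ is reduced, $R_{P_j}$ is a field for each minimal prime $P_j$, and $(R/P_i)_{P_j}=0$ for $i\ne j$ while $(R/P_j)_{P_j}=R_{P_j}$; hence $\phi$ becomes an isomorphism after localizing at each minimal prime of $R$. Since localization is exact, $\ker\phi$ and $\operatorname{coker}\phi$ are supported only at nonminimal primes of $R$; as $\dim R=d$ forces every prime $Q$ with $\dim R/Q=d$ to be minimal, this gives $\dim\ker\phi<d$ and $\dim\operatorname{coker}\phi<d$. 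Put $K=\ker\phi$, $Z=\operatorname{im}\phi\cong M/K$ and $C=(M\otimes_RS)/Z=\operatorname{coker}\phi$, so $\dim K<d$ and $\dim C<d$.

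Now applying Lemma \ref{Lemma3} to the submodule $K\subseteq M$ lets us replace $M$ by $Z$, i.e. $\lambda(M/J_mM)-\lambda(Z/J_mZ)=o(m^d)$. So it remains to compare $Z$ with $M\otimes_RS$, where now it is the \emph{quotient} $C$ that is small-dimensional. From the exact sequence
$$
0\to Z/(J_m(M\otimes_RS)\cap Z)\to (M\otimes_RS)/J_m(M\otimes_RS)\to C/J_mC\to 0
$$
and $\lambda(C/J_mC)\le\lambda(C/m_R^{cm}C)=o(m^d)$, it is enough to show that $\lambda\big((J_m(M\otimes_RS)\cap Z)/J_mZ\big)=o(m^d)$.

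This last estimate is the \emph{main obstacle}: Lemma \ref{Lemma3} controls small submodules but not small quotients, and since $\{J_m\}$ need not be Noetherian there is no Artin--Rees lemma to call on. My plan is to choose, by prime avoidance, a nonzero element $x\in\operatorname{Ann}_R(C)$ lying outside every minimal prime $P$ of $R$ with $\dim R/P=d$; this is possible because $\dim C<d$ forces $\operatorname{Ann}_R(C)\not\subseteq P$ for every such $P$. On one hand $x(M\otimes_RS)\subseteq Z$, so $x\big(J_m(M\otimes_RS)\cap Z\big)\subseteq J_mZ$, whence the finite-length module $(J_m(M\otimes_RS)\cap Z)/J_mZ$ is annihilated by $x$ and sits inside $Z/J_mZ$, hence inside the kernel of multiplication by $x$ on $Z/J_mZ$; comparing lengths of this kernel with the corresponding cokernel shows its length is at most $\lambda\big((Z/xZ)/J_m(Z/xZ)\big)\le\lambda\big((Z/xZ)/m_R^{cm}(Z/xZ)\big)$. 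On the other hand our choice of $x$ makes every prime containing $x$ of coheight $<d$, so $\dim(Z/xZ)\le\dim R/(x)<d$ and the last quantity is $o(m^d)$. Combining the two comparisons yields $\lambda(M/J_mM)-\lambda((M\otimes_RS)/J_m(M\otimes_RS))=o(m^d)$, which is the assertion.
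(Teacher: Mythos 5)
Your argument is correct and follows essentially the same route as the paper: factor $\gamma\colon M\to M\otimes_RS$ through its image, dispose of the kernel via Lemma \ref{Lemma3}, and control both remaining error terms by multiplying by an element $x$ that kills the cokernel, using the kernel--cokernel length equality for $\cdot x$ on a finite-length quotient. The only (harmless) variants are that you pick $x\in\operatorname{Ann}_R(C)$ by prime avoidance where the paper takes the non-zerodivisor clearing denominators of $S$ inside $\bigoplus_i Q(R/P_i)$, and that you bound $\lambda(C/J_mC)$ directly from $\dim C<d$ rather than routing through $V/xV$.
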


\begin{proof} Define a filtration of $R$ by $m_R$-primary ideals by $J_m=I(1)_{mn_1}\cdots I(r)_{mn_r}$.  There exists $c\in \ZZ_+$ such that $m_R^c\subset J_1$. Since $S$ is a finitely generated $R$ submodule of the total ring of fractions $T=\bigoplus_{i=1}^s Q(R/P_i)$ of $R$, there exists a non zerodivisor $x\in R$ such that $xS\subset R$. Tensoring the short exact sequence 
$$
0\rightarrow R\rightarrow S\rightarrow S/R\rightarrow 0
$$
of $R$-modules with $M$, 
we  have a natural short exact sequence of $R$-modules,
$$
M\stackrel{\gamma}{\rightarrow} M\otimes_RS\rightarrow M\otimes_R(S/R)\rightarrow 0.
$$
Let $K=\mbox{kernel }\gamma$ and $U=\mbox{Image }\gamma$. We have that $(\mbox{Kernel }\gamma)_{P_i}=0$ for $1\le i\le s$ since $R_{P_i}\cong S_{P_i}$ for all $i$. Thus $\dim \mbox{Kernel }\gamma<d$, and by Lemma \ref{Lemma3},
$$
\lim_{m\rightarrow\infty} \frac{\lambda(U/J_mU)}{m^d}=\lim_{m\rightarrow\infty}\frac{\lambda(M/J_mM)}{m^d}.
$$
Let $V=M\otimes_RS$. We have short exact sequences of $R$-modules,
$$
0\rightarrow U\cap J_mV/J_mU\rightarrow U/J_mU\rightarrow V/J_mV\rightarrow N_m\rightarrow 0
$$
where  $N_m=V/U+J_mV$. We also have short exact sequences
\begin{equation}\label{eq22}
0\rightarrow A_m\rightarrow U/J_mU\stackrel{x}{\rightarrow} U/J_mU\rightarrow W_m\rightarrow 0
\end{equation}
where $A_m$ is the kernel of multiplication by $x$ and $W_m$ is the cokernel. Now $x(U\cap J_mV)\subset J_mU$, so $U\cap J_mV/J_mU\subset A_m$ for all $m$. Now $W_m\cong (U/xU)/J_m(U/xU)$ and $\dim U/xU<d$. We have that
$$
\lambda(W_m)\le \lambda((U/xU)/m_R^{mc}(U/xU))
$$
and thus 
$$
\lim_{m\rightarrow\infty}\frac{\lambda(W_m)}{m^d}=0.
$$
From (\ref{eq22}), we have 
$$
\lim_{m\rightarrow\infty}\frac{\lambda(U\cap J_mV/J_mU)}{m^d}\le
\lim_{m\rightarrow \infty}\frac{\lambda(A_m)}{m^d}=\lim_{m\rightarrow\infty}\frac{\lambda(W_m)}{m^d}=0.
$$
Since $xV\subset U$, we have
$$
N_m\cong V/U+J_mV=V/(U+J_mV+xV).
$$
Thus
$$
\lambda(N_m)\le\lambda((V/xV)/m_R^{mc}(V/xV))
$$
for all $m$, so 
$$
\lim_{m\rightarrow\infty}\frac{\lambda(N_m)}{m^d}=0
$$
since $\dim V/xV<d$.
\end{proof}

\section{Mixed Multiplicities of Filtrations}\label{MMult}

The following theorem allows  us to define  mixed multiplicities for arbitrary (not necessarily Noetherian) filtrations of $m_R$-ideals in a Noetherian local ring with $\dim N(\hat R)<\dim R$.  By Theorem \ref{TheoremI20}, if the assumption $\dim N(\hat R)<d$ is removed from the hypotheses of Theorem \ref{Theorem2}, then the conclusions of Theorem \ref{Theorem2} will no longer be true. 
Theorem \ref{Theorem2} generalizes a theorem of  Bhattacharya \cite{Bh} and Teissier and Risler \cite{T1} (also proven in  \cite[Theorem 17.4.2]{HS}) for $m_R$-primary ideals to filtrations of $m_R$-primary ideals.

\begin{Theorem}\label{Theorem2} Suppose that $R$ is a Noetherian local ring of dimension $d$ such that 
$$
\dim N(\hat R)<d
$$
 and $\mathcal I(1)=\{I(1)_i\},\ldots,\mathcal I(r)=\{I(r)_i\}$ are (not necessarily Noetherian) filtrations of $R$ by $m_R$-primary ideals. Suppose that $M$ is a finitely generated $R$-module. Then there exists a homogeneous polynomial $G(x_1,\ldots,x_r)\in \RR[x_1,\ldots,x_r]$ which is of total degree $d$  if $G$ is nonzero, such that for all $n_1,\ldots,n_r\in \ZZ_+$,
$$
\lim_{m\rightarrow\infty}\frac{\lambda(M/I(1)_{mn_1}\cdots I(r)_{nm_r}M)}{m^d}=G(n_1,\ldots,n_r). 
$$
 \end{Theorem}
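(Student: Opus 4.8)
The plan is to reduce the statement, through a short chain of standard reductions, to the case $M=R$ with $R$ a complete local domain, which is precisely Theorem \ref{Theorem1}.

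First I would replace $R$ by its completion: since $\lambda_R(M/JM)=\lambda_{\hat R}(\hat M/(J\hat R)\hat M)$ for every $m_R$-primary ideal $J$, and $\{I(j)_i\hat R\}$ is again a filtration of $\hat R$ by $m_{\hat R}$-primary ideals, I may assume $R$ is complete; then $\dim N(R)<d$ and $R$ is excellent. Next I would kill the nilradical: writing $N=N(R)$, the submodule $NM\subseteq M$ is a homomorphic image of a finite direct sum of copies of $N$, so $\dim NM\le\dim N<d$, and Lemma \ref{Lemma3} allows me to replace $M$ by $\overline M=M/NM$. Since $N\overline M=0$, $\overline M$ is a finitely generated module over the reduced complete local ring $R/N$, which still has dimension $d$, and the images of the $I(j)_i$ in $R/N$ remain $m_{R/N}$-primary; so I may also assume $R$ is reduced.

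Now suppose $R$ is reduced and complete of dimension $d$, with minimal primes $P_1,\dots,P_s$, and set $J_m=I(1)_{mn_1}\cdots I(r)_{mn_r}$. By Lemma \ref{Lemma5} the limit for $M$ equals the limit for $M\otimes_RS$ with $S=\bigoplus_{i=1}^sR/P_i$, and since $M\otimes_RS=\bigoplus_iM/P_iM$ we have
$$
\frac{\lambda\big(M\otimes_RS/J_m(M\otimes_RS)\big)}{m^d}=\sum_{i=1}^s\frac{\lambda_{R/P_i}\big((M/P_iM)/\overline J_m(M/P_iM)\big)}{m^d},
$$
where $\overline J_m$ denotes the image of $J_m$ in $R/P_i$. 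Fix $i$ and put $e_i=\dim R/P_i$. The ring $R/P_i$ is a complete local domain, hence analytically irreducible, and $M/P_iM$ is a finitely generated $R/P_i$-module; applying Lemma \ref{Lemma4} over $R/P_i$ (with the normalization $m^{e_i}$) shows that the $e_i$-normalized limit of the $i$-th summand equals $s_i\lim_{m\to\infty}\lambda_{R/P_i}((R/P_i)/\overline J_m)/m^{e_i}$ with $s_i={\rm rank}_{R/P_i}(M/P_iM)$, and Theorem \ref{Theorem1} identifies this last limit with $H_i(n_1,\dots,n_r)$ for some homogeneous $H_i\in\RR[x_1,\dots,x_r]$ of total degree $e_i$. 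Consequently, normalizing by $m^d$ instead: if $e_i<d$ the $i$-th summand tends to $0$, while if $e_i=d$ it tends to $s_iH_i(n_1,\dots,n_r)$. Hence $G:=\sum_{i\,:\,e_i=d}s_iH_i$, which is homogeneous of total degree $d$ or else identically $0$, satisfies $\lim_{m\to\infty}\lambda(M/J_mM)/m^d=G(n_1,\dots,n_r)$ for all $n_1,\dots,n_r\in\ZZ_+$, as desired.

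The only substantive ingredient is Theorem \ref{Theorem1}, which in turn depends on the Newton--Okounkov volume comparison of Proposition \ref{Prop2}; within the present argument the work is organizational. The step requiring care is dimension bookkeeping: one must check that each auxiliary module produced by Lemmas \ref{Lemma3}, \ref{Lemma4} and \ref{Lemma5} genuinely has dimension $<d$, and that the minimal primes $P_i$ with $\dim R/P_i<d$ contribute nothing once the normalization is switched from $m^{e_i}$ to $m^d$. (When $d=0$ the statement is immediate, since $J_mM$ is eventually constant.)
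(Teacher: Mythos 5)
Your proposal is correct and follows essentially the same reduction chain as the paper's proof: pass to the completion, kill the nilradical via Lemma~\ref{Lemma3} with $T=N(R)M$, pass to $M\otimes_R S$ via Lemma~\ref{Lemma5}, then invoke Lemma~\ref{Lemma4} and Theorem~\ref{Theorem1}. The one place you are more explicit than the paper is the last step: the paper compresses ``decompose $M\otimes_R S=\bigoplus_i M/P_iM$, treat each summand as a module over the analytically irreducible domain $R/P_i$, apply Lemma~\ref{Lemma4} there, and discard the components with $\dim R/P_i<d$'' into the single sentence that Lemmas~\ref{Lemma5} and~\ref{Lemma4} reduce to the case $R$ analytically irreducible and $M=R$. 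Your spelled-out version, including the observation that components of dimension $e_i<d$ contribute $0$ after normalizing by $m^d$ and that $G=\sum_{e_i=d}s_iH_i$ is homogeneous of degree $d$ or identically zero, is exactly the bookkeeping the paper leaves implicit, and it is correct.
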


We will see in Theorem  \ref{Cor2} that the conclusions of the theorem hold for all $n_1,\ldots,n_r\in \NN$.

\begin{proof} Replacing $R$ with $\hat R$, $I(j)_i$ with $I(j)_i \hat R$  and $M$ with $M\otimes_R\hat R$, we may assume that $R$ is complete. By Lemma \ref{Lemma3} (taking $T=N(R)M$) we reduce to the case where $R$ is analytically unramified. By Lemma \ref{Lemma5}, we reduce to the case where $R$ is analytically irreducible. By Lemma \ref{Lemma4}, we reduce to the case where $R$ is analytically irreducible and $M=R$. Theorem \ref{Theorem2} now follows from Theorem \ref{Theorem1}.
\end{proof}

Let assumptions be as the statement of Theorem \ref{Theorem2}. 
Generalizing the classical definition of mixed multiplicities for  $m_R$-primary ideals (\cite{Bh}, \cite{R}, \cite{T1},  \cite[Definition 17.4.3]{HS}) 
we define the mixed multiplicities  of $M$ of type  $(d_1,\ldots,d_r)$ with respect to  the filtrations $\mathcal I(1),\ldots,\mathcal I(r)$ of $R$ by $m_R$-primary ideals
$$
e_R(\mathcal I(1)^{[d_1]},\ldots, \mathcal I(r)^{[d_r]};M)
$$
  from the coefficients of the homogeneous polynomial $G(n_1,\ldots,n_r)$. Specifically,   
 we write 
$$
G(n_1,\ldots,n_r)=\sum_{d_1+\cdots d_r=d}\frac{1}{d_1!\cdots d_r!}e_R(\mathcal I(1)^{[d_1]},\ldots, \mathcal I(r)^{[d_r]};M)n_1^{d_1}\cdots n_r^{d_r}.
$$

We write the multiplicity $e_R(\mathcal I;M)=e_R(\mathcal I^{[d]};M)$ if $r=1$, and $\mathcal I=\{I_i\}$ is a filtration of  $R$ by $m_R$-primary ideals. We have that
$$
e_R(\mathcal I;M)=\lim_{m\rightarrow \infty}d!\frac{\lambda(M/I_mM)}{m^d}.
$$

\begin{Proposition}\label{Prop6} Suppose that $R$ is a $d$-dimensional Noetherian local ring with  $\dim N(\hat R)<d$. Suppose $\mathcal I(j)=\{I(j)_i\}$ for $1\le j\le r$ are filtrations of $R$ by $m_R$-primary ideals and $M$ is a finitely generated $R$-module. Then for all $d_1,\ldots,d_r$ with $d_1+\cdots +d_r=d$, we have that
$$
\lim_{a\rightarrow\infty}e_R(\mathcal I_a(1)^{[d_1]},\ldots,\mathcal I_a(r)^{[d_r]};M)=e_R(\mathcal I(1)^{[d_1]},\ldots,\mathcal I(r)^{[d_r]};M).
$$
\end{Proposition}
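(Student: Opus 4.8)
The plan is to deduce this from the machinery already assembled in Sections~\ref{Sec2}--\ref{Reduc}, reducing first to the case $M=R$ with $R$ analytically irreducible, and then invoking the volume estimates of Proposition~\ref{Prop2}. First I would observe that the mixed multiplicities are, up to the explicit rational factors $1/(d_1!\cdots d_r!)$, the coefficients $b_{i_1,\ldots,i_r}$ of the homogeneous polynomial $G(x_1,\ldots,x_r)$ from Theorem~\ref{Theorem2}, and similarly $e_R(\mathcal I_a(1)^{[d_1]},\ldots,\mathcal I_a(r)^{[d_r]};M)$ corresponds to the coefficient $b_{i_1,\ldots,i_r}(a)$ of the polynomial $F_a$ attached to the $a$-th truncations. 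Thus the statement is exactly the assertion that $b_{i_1,\ldots,i_r}(a)\to b_{i_1,\ldots,i_r}$ as $a\to\infty$, which is precisely Corollary~\ref{Cor1}, once we know we are in the setting where those objects are defined; so the real work is to perform the same sequence of reductions used in the proof of Theorem~\ref{Theorem2} and to check that truncation is compatible with each reduction step.

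The key steps, in order: (1) Replace $R$ by $\hat R$, each $I(j)_i$ by $I(j)_i\hat R$, and $M$ by $M\otimes_R\hat R$; lengths are preserved and the truncated filtrations of the $I(j)_i\hat R$ are the extensions of the truncated filtrations $I_a(j)_i$, so both sides of the claimed equality are unchanged, and we may assume $R$ is complete, hence excellent. (2) Using Lemma~\ref{Lemma3} with $T=N(R)M$ (resp.\ $T=N(R)$ for the module $R$), pass to the analytically unramified case; the limits defining both the truncated and the untruncated mixed multiplicities are unaffected, since the estimate $\lambda(T/m_R^{cm}T)/m^d\to 0$ applies verbatim to $J_m=I_a(1)_{mn_1}\cdots I_a(r)_{mn_r}$ as well as to the untruncated $J_m$. (3) Apply Lemma~\ref{Lemma5} to pass to $\bigoplus R/P_i$ and then to a single analytically irreducible domain, and Lemma~\ref{Lemma4} to reduce from an arbitrary finitely generated $M$ over such a domain to $M=R$ (picking up the factor $s=\operatorname{rank}M$, which appears identically on both sides). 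At every stage one must note that truncation commutes with base change and with passage to direct summands $R/P_i$ in the relevant sense, so that the "$a$-truncated" quantity on the left is the honest $a$-truncated mixed multiplicity of the reduced situation. (4) Once reduced to $M=R$ and $R$ analytically irreducible, compare coefficients: $F(n_1,\ldots,n_r)=G(n_1,\ldots,n_r)$ for all $n_1,\ldots,n_r\in\ZZ_+$ by Theorem~\ref{Theorem1}, $F_a(n_1,\ldots,n_r)\to F(n_1,\ldots,n_r)$ pointwise by Proposition~\ref{Prop2}, and then Lemma~\ref{Prop1} (which recovers the coefficients of a degree-$d$ polynomial as fixed rational linear combinations of finitely many values) forces $b_{i_1,\ldots,i_r}(a)\to b_{i_1,\ldots,i_r}$; this is literally Corollary~\ref{Cor1}.

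The main obstacle I anticipate is bookkeeping rather than anything deep: one must verify carefully that, under each of the reductions in steps~(1)--(3), the $a$-th truncation of the reduced filtration agrees with the image of the $a$-th truncation of the original filtration — truncation is defined by $I_{a,n}=\sum_{i+j=n}I_{a,i}I_{a,j}$ for $n>a$, and one needs that extending ideals to $\hat R$, or pushing to $\bigoplus R/P_i$, or intersecting with a submodule in Lemma~\ref{Lemma3}, all interact correctly with this sum-of-products construction. For instance, $(I_a(j)_n)\hat R = (I(j)_n\hat R)_{a,n}$ because extension to $\hat R$ is flat and commutes with finite sums and products of ideals. Granting these compatibilities — each of which is routine — the proposition follows by simply running the proof of Theorem~\ref{Theorem2} "with truncation parameter $a$ attached" and then letting $a\to\infty$ via Corollary~\ref{Cor1}. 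I would also remark that the same argument shows $e_R(\mathcal I_a(1)^{[d_1]},\ldots,\mathcal I_a(r)^{[d_r]};M)\in\QQ$ for each $a$ (since the truncated filtrations are Noetherian, Proposition~\ref{Prop3} applies), so the proposition exhibits the real mixed multiplicities of $M$ as limits of rational ones.
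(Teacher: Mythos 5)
Your proposal is correct and follows essentially the same route as the paper: the paper's own proof simply invokes the reduction chain from the proof of Theorem~\ref{Theorem2} (completion, then Lemmas~\ref{Lemma3}, \ref{Lemma5}, \ref{Lemma4}) to reach the analytically irreducible, $M=R$ case, and then cites Corollary~\ref{Cor1}. Your added verification that truncation commutes with each reduction step is a worthwhile point that the paper leaves implicit, but it does not change the structure of the argument.
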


\begin{proof} The proof of Theorem \ref{Theorem2} gives a reduction  to the case that $R$ is analytically irreducible  and $M=R$. The proposition now follows from Corollary \ref{Cor1}.
\end{proof}

The following theorem extends to filtrations of $R$ by $m_R$-primary ideals the Minkowski inequalities of $m_R$-primary ideals of Teissier \cite{T1}, \cite{T2} and Rees and Sharp \cite{RS}.
The inequality 4) of Theorem \ref{Theorem12} was proven for graded families of $m_R$-primary ideals in a regular local ring with algebraically closed residue field by Musta\c{t}\u{a} (Corollary 1.9 \cite{Mus}) and more recently by Kaveh and Khovanskii (\cite[Corollary 7.14]{KK1}). The inequality 4) was proven with our assumption that $\dim N(\hat R)<d$ in \cite[Theorem 3.1]{C3}.
Inequalities 2) - 4) can be deduced directly from inequality 1), as in the proof of  \cite[Corollary 17.7.3]{HS}, as explained in  \cite{T2}, \cite{RS} and \cite{HS}.

\begin{Theorem}\label{Theorem12}(Minkowski Inequalities)  Suppose that $R$ is a Noetherian $d$-dimensional  local ring with $\dim N(\hat R)<d$, $M$ is a finitely generated $R$-module and $\mathcal I(1)=\{I(1)_j\}$ and $\mathcal I(2)=\{I(2)_j\}$ are filtrations of $R$ by $m_R$-primary ideals. Then 
\begin{enumerate}
\item[1)] $e_R(\mathcal I(1)^{[i]},\mathcal I(2)^{[d-i]};M)^2\le e_R(\mathcal I(1)^{[i+1]},\mathcal I(2)^{[d-i-1]};M)e_R(\mathcal I(1)^{[i-1]},\mathcal I(2)^{[d-i+1]};M)$

 for $1\le i\le d-1$.
\item[2)]  For $0\le i\le d$,

 $e_R(\mathcal I(1)^{[i]},\mathcal I(2)^{[d-i]};M)e_R(\mathcal I(1)^{[d-i]},\mathcal I(2)^{[i]};M)\le e_R(\mathcal I(1);M)e_R(\mathcal I(2);M)$,
\item[3)] For $0\le i\le d$, $e_R(\mathcal I(1)^{[d-i]},\mathcal I(2)^{[i]};M)^d\le e_R(\mathcal I(1);M)^{d-i}e_R(\mathcal I(2);M)^i$ and
\item[4)]  $e_R(\mathcal I(1)\mathcal I(2));M)^{\frac{1}{d}}\le e_R(\mathcal I(1);M)^{\frac{1}{d}}+e_R(\mathcal I(2);M)^{\frac{1}{d}}$, 

where $\mathcal I(1)\mathcal I(2)=\{I(1)_jI(2)_j\}$.
\end{enumerate}
\end{Theorem}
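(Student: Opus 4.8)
The plan is to prove the strong inequality 1) and then obtain 2), 3) and 4) from it by the standard formal manipulations of \cite{T1}, \cite{T2}, \cite{RS} and \cite[Corollary 17.7.3]{HS}. To prove 1) I would first settle the case of Noetherian filtrations, where everything reduces to the classical Minkowski inequality for $m_R$-primary ideals, and then pass to arbitrary filtrations by truncation, using the convergence statement of Proposition \ref{Prop6}.

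\emph{The Noetherian case.} Let $\mathcal J(1)=\{J(1)_i\}$, $\mathcal J(2)=\{J(2)_i\}$ be Noetherian filtrations of $R$ by $m_R$-primary ideals, and choose $\ell\in\ZZ_+$ with $J(k)_{\ell i}=J(k)_\ell^i$ for $k=1,2$ and all $i\ge 0$. By Lemma \ref{Lemma0}, the polynomial $G$ of Theorem \ref{Theorem2} attached to $\mathcal J(1),\mathcal J(2)$ satisfies, for $n_1,n_2\in\ZZ_+$,
$$
G(n_1,n_2)=\lim_{m\rightarrow\infty}\frac{\lambda(M/J(1)_{mn_1}J(2)_{mn_2}M)}{m^d}=\frac{1}{d!\,\ell^d}\,e_R(J(1)_\ell^{n_1}J(2)_\ell^{n_2};M).
$$
By the classical theory of mixed multiplicities of $m_R$-primary ideals (\cite{Bh}, \cite{T1}, \cite[Theorem 17.4.2]{HS}) one has $e_R(J(1)_\ell^{n_1}J(2)_\ell^{n_2};M)=\sum_{d_1+d_2=d}\binom{d}{d_1}e_R(J(1)_\ell^{[d_1]},J(2)_\ell^{[d_2]};M)n_1^{d_1}n_2^{d_2}$ for $n_1,n_2\in\ZZ_+$. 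Since two polynomials agreeing on $\ZZ_+^2$ coincide, comparing with the expansion of $G$ defining the $e_R(\mathcal J(1)^{[d_1]},\mathcal J(2)^{[d_2]};M)$ gives
$$
e_R(\mathcal J(1)^{[d_1]},\mathcal J(2)^{[d_2]};M)=\frac{1}{\ell^d}\,e_R(J(1)_\ell^{[d_1]},J(2)_\ell^{[d_2]};M)\qquad(d_1+d_2=d).
$$
Hence inequality 1) for $\mathcal J(1),\mathcal J(2)$ is exactly the classical Minkowski inequality of Rees and Sharp \cite{RS} (and Teissier \cite{T1}, \cite{T2}) for the $m_R$-primary ideals $J(1)_\ell$ and $J(2)_\ell$, after cancelling the common factor $\ell^{-2d}$.

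\emph{The general case and the deduction of 2)--4).} For arbitrary $\mathcal I(1),\mathcal I(2)$ I would apply the previous step to the $a$-th truncated filtrations $\mathcal I_a(1),\mathcal I_a(2)$ of Definition \ref{trunc}, which are Noetherian for every $a\in\ZZ_+$ since the $a$-th truncation is generated in degrees $\le a$. This gives, for each $a$ and each $1\le i\le d-1$,
$$
e_R(\mathcal I_a(1)^{[i]},\mathcal I_a(2)^{[d-i]};M)^2\le e_R(\mathcal I_a(1)^{[i+1]},\mathcal I_a(2)^{[d-i-1]};M)\,e_R(\mathcal I_a(1)^{[i-1]},\mathcal I_a(2)^{[d-i+1]};M).
$$
Letting $a\rightarrow\infty$ and using Proposition \ref{Prop6}, which gives $e_R(\mathcal I_a(1)^{[d_1]},\mathcal I_a(2)^{[d_2]};M)\rightarrow e_R(\mathcal I(1)^{[d_1]},\mathcal I(2)^{[d_2]};M)$, the non-strict inequality passes to the limit and 1) follows. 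For the rest, set $f(i)=e_R(\mathcal I(1)^{[i]},\mathcal I(2)^{[d-i]};M)$, so $f(d)=e_R(\mathcal I(1);M)$ and $f(0)=e_R(\mathcal I(2);M)$ by Proposition \ref{Meq}, and $e_R(\mathcal I(1)\mathcal I(2);M)=\sum_{i=0}^d\binom{d}{i}f(i)$ by applying Theorem \ref{Theorem2} to $\mathcal I(1)\mathcal I(2)$ at $(n_1,n_2)=(1,1)$. When all $f(i)>0$, inequality 1) is the midpoint-convexity of $i\mapsto\log f(i)$, hence $f(i)\le f(0)^{(d-i)/d}f(d)^{i/d}$; this is 3), multiplying the bounds for $i$ and $d-i$ gives 2), and summing the bounds against $\binom{d}{i}$ and extracting a $d$-th root (via the binomial theorem) gives 4). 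If some $f(j)=0$ with $0<j<d$, then iterating 1) forces $f(1)=\cdots=f(d-1)=0$ and 2)--4) are immediate. This is the argument of \cite[Corollary 17.7.3]{HS}.

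\emph{Where the difficulty lies.} Essentially all of the genuine work is upstream: the existence of $G$ (Theorem \ref{Theorem2}) and the convergence of the truncated mixed multiplicities (Proposition \ref{Prop6}) both rest on the Newton--Okounkov body estimates of Section \ref{SecVol}. Within the present argument the only step demanding care is the Noetherian-case identity $e_R(\mathcal J(1)^{[d_1]},\mathcal J(2)^{[d_2]};M)=\ell^{-d}e_R(J(1)_\ell^{[d_1]},J(2)_\ell^{[d_2]};M)$, which is what lets us import the classical Rees--Sharp inequality; once that is in hand the general case is a routine limit and 2)--4) are purely formal (with the mild bookkeeping of the degenerate case where some $f(i)$ vanishes).
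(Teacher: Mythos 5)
Your proposal is correct and follows essentially the same strategy as the paper: prove the inequality for the Noetherian truncations by reducing to the classical Rees--Sharp theorem via Lemma \ref{Lemma0}, and then pass to the limit using Proposition \ref{Prop6}. The paper's proof differs in two inessential ways. First, it begins by invoking the reduction of the proof of Theorem \ref{Theorem2} to assume $R$ is an analytically irreducible domain and $M=R$, so it only ever cites the classical Minkowski inequalities in the form $e_R(I^{[i]},J^{[d-i]};R)$; you instead cite the module version of the classical result directly (as in \cite[Theorem 17.7.2]{HS}), which is also valid and slightly shorter. Second, the paper establishes all four inequalities 1)--4) at the level of truncated filtrations (these are just the classical inequalities for the $m_R$-primary ideals $J_a(k)_1$ by \cite{RS}) and passes each through the limit; you pass only 1) through the limit and then derive 2)--4) from 1) by the standard log-convexity/binomial argument, handling the degenerate case by noting that a vanishing interior term propagates via 1) to force $f(1)=\cdots=f(d-1)=0$. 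Your route is self-contained and slightly more economical, since you avoid needing to verify separately that $e_R(\mathcal I_a(1)\mathcal I_a(2);M)$ converges to $e_R(\mathcal I(1)\mathcal I(2);M)$, though that also follows from Proposition \ref{Prop6} plus the binomial identity. Both versions hinge on the same two upstream inputs, namely the existence of the limit polynomial (Theorem \ref{Theorem2}) and the convergence of truncated mixed multiplicities (Proposition \ref{Prop6}), and you identify these correctly as the crux.
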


\begin{proof} By the reduction of the proof of Theorem \ref{Theorem2}, it suffices to prove the theorem for $R$ an analytically irreducible domain and $M=R$. We first will show that for all $a\in \ZZ_+$, the Minkowski inequalities hold for the $a$-th truncated filtrations  $\mathcal I_a(1)=\{I_a(1)_m\}$ and $\mathcal I_a(2)=\{I_a(2)_m\}$ (defined in Definition \ref{trunc}). 

Given $a\in \ZZ_+$, there exists $f_a\in \ZZ_+$ such that $I_a(i)_{f_am}=(I_a(i)_{f_a})^m$ for all $m\ge 0$ and $i=1,2$.   Define  filtrations of $R$ by $m_R$-primary ideals by $J_a(i)_m=I_a(i)_{f_am}$. Then for $n_1, n_2\in \ZZ_+$, 
$$
\lim_{m\rightarrow\infty}\frac{\lambda(R/J_a(1)_{mn_1}J_a(2)_{mn_2})}{m^d}=
\sum_{d_1+d_2=d}\frac{1}{d_1!d_2!}e_R(J_a(1)_1^{[d_1]},J_a(2)_1^{[d_2]};R)n_1^{d_1} n_2^{d_2},
$$
$$
\lim_{m\rightarrow \infty}\frac{\lambda(R/J_a(k)_1^m)}{m^d}=\frac{1}{d!}e_R(J_a(k)_1;R)
$$
for $k=1$ and 2 and
$$
\lim_{m\rightarrow \infty}\frac{\lambda(R/(J_a(1)_1J_a(2)_1)^m)}{m^d}=\frac{1}{d!}e_R(J_a(1)_1J_a(2)_1;R)
$$
where $e_R(J_a(1)^{[d_1]}$, $J_a(2)^{[d_2]};R)$, e$_R(J_a(1)_1;R)$, $e_R(J_a(2)_1;R)$, $e_R(J_a(1)J_a(2)_1;R)$  are the usual mixed multiplicities of ideals (\cite[Theorem 17.4.2, Definition 17.4.3]{HS}).

Now the Minkowski inequalities hold for the  mixed multiplicities of ideals 
$$
e_R(J_a(1)_1^{[d_1]},J_a(2)_1^{[d_2]};R), e_R(J_a(1)_1;R), e_R(J_a(2)_1;R)\mbox{ and }e_R(J_a(1)_1J_a(2)_1;R)
$$
  by \cite{RS} or  \cite[Theorem 17.7.2 and Corollary 17.7.3]{HS}. By Lemma \ref{Lemma0},
$$
\lim_{m\rightarrow \infty}\frac{\lambda(R/I_a(1)_{mn_1},I_a(2)_{mn_2})}{m^d}
=\frac{1}{f_a^d}\left(\lim_{m\rightarrow \infty}\frac{\lambda(R/J_a(1)_1^{mn_1}J_a(2)_1^{mn_2})}{m^d}\right)
$$
for all $n_1,n_2\in \NN$,
$$
\lim_{m\rightarrow \infty}\frac{\lambda(R/I_a(k)_m)}{m^d}=\frac{1}{f_a^d}\lim_{m\rightarrow\infty}\frac{\lambda(R/J_a(k)_1^m)}{m^d}
$$
for $k=1$ and 2 and
$$
\lim_{m\rightarrow \infty}\frac{\lambda(R/I_a(1)_mI_a(2)_m)}{m^d}=\frac{1}{f_a^d}\lim_{m\rightarrow \infty}\frac{\lambda(R/(J_a(1)_1J_a(2)_1)^m)}{m^d}.
$$
By Lemma \ref{Prop1},
$$
e_R(\mathcal I_a(1)^{[d_1]},\mathcal I_a(2)^{[d_2]};R)=\frac{1}{f_a^d}e_R(J_a(1)_1^{[d_1]},J_a(2)_1^{[d_2]};R)
$$
for all $d_1,d_2$,
$$
e_R(\mathcal I_a(1);R)=\frac{1}{f_a^d}e_R(J_a(1)_1;R), e_R(\mathcal I_a(2)_1;R)=\frac{1}{f_a^d}e_R(J_a(2)_1;R)
$$
and
$$
e_R(\mathcal I_a(1)\mathcal I_a(2);R)=\frac{1}{f_a^d}e_R(J_a(1)_1J_a(2)_1;R).
$$

Thus the Minkowski inequalities hold for the $e_R(\mathcal I_a(1)^{[d_1]},\mathcal I_a(2)^{[d_2]};R)$, $e_R(\mathcal I_a(1);R)$, $e_R(\mathcal I_a(2);R)$ and $e_R(\mathcal I_a(1)\mathcal I_a(2);R)$. Now the Minkowski inequalities hold for the 
$$
e_R(\mathcal I(1)^{[d_1]},\mathcal I(2)^{[d_2]};R), e_R(\mathcal I(1);R), e_R(\mathcal I(2);R)\mbox{ and }e_R(\mathcal I(1)\mathcal I(2);R)
$$
  by Proposition \ref{Prop6}.
\end{proof}

\begin{Remark}(Minkowski equality)
Teissier \cite{T3} (for Cohen Macaulay normal complex analytic $R$), Rees and Sharp \cite{RS} (in dimension 2) and Katz \cite{Ka} (in complete generality) have proven that if $R$ is a $d$-dimensional formally equidimensional Noetherian local ring and $I(1)$, $I(2)$ are $m_R$-primary ideals such that the Minkowski equality
$$
e_R((I(1) I(2));R)^{\frac{1}{d}}= e_R( I(1);R)^{\frac{1}{d}}+e_R(I(2);R)^{\frac{1}{d}}
$$
holds,
 then there exist positive integers $r$ and $s$ such that the complete ideals $\overline {I(1)^r}$ and $\overline{I(2)^s}$ are equal, which is equivalent to the statement that the $R$-algebras $\bigoplus_{n\ge 0}I(1)^n$ and $\bigoplus_{n\ge 0}I(2)^n$ have the same integral closure. 
 
 This statement is not true for filtrations, even in a regular local ring, as is shown by the following simple example. Let $k$ be a field and $R$ be the power series ring $R=k[[x_1,\ldots,x_d]]$. Let $\mathcal I(1)=\{I(1)_i\}$ where $I(1)_i=m_R^i$ and 
 $\mathcal I(2)=\{I(2)_i\}$ where $I(2)_i=m_R^{i+1}$. Then the Minkowski equality
 $$
e_R((\mathcal I(1) \mathcal I(2));R)^{\frac{1}{d}}= e_R( \mathcal I(1);R)^{\frac{1}{d}}+e_R(\mathcal I(2);R)^{\frac{1}{d}}
$$ 
is satisfied  but  $\bigoplus_{i\ge 0}I(1)_i$ and $\bigoplus_{i\ge 0}I(2)_i$ do not have the same integral closure.
\end{Remark}

The following  proposition generalizes  an identity of Rees, \cite[Lemma 2.4]{R}.

\begin{Proposition}\label{Meq}  Suppose that $R$ is a Noetherian local ring of dimension $d$ such that $\dim N(\hat R)<d$ and $\mathcal I(1)=\{I(1)_i\},\ldots,\mathcal I(r)=\{I(r)_i\}$ are  filtrations of $R$ by $m_R$-primary ideals. Suppose that $M$ is a finitely generated $R$-module. Then for $1\le i\le r$,
$$
\begin{array}{l}
e_R(\mathcal I(1)^{[d_1]},\cdots, \mathcal I(i-1)^{[d_{i-1}]},\mathcal I(i)^{[0]},\mathcal I(i+1)^{[d_{i+1}]},\cdots,\mathcal I(r)^{[d_r]};M)\\
=e_R(\mathcal I(1)^{[d_1]},\cdots, \mathcal I(i-1)^{[d_{i-1}]},\mathcal I(i+1)^{[d_{i+1}]},\cdots,\mathcal I(r)^{[d_r]};M)
\end{array}
$$
whenever $d_1+\cdots+d_{i-1}+d_{i+1}\cdots+d_r=d$.

In particular, 
$$
e_R(\mathcal I(i);M)=e_R(\mathcal I(1)^{[0]},\ldots,\mathcal I(i-1)^{[0]},\mathcal I(i)^{[d]},\mathcal I(i+1)^{[0]},\ldots,\mathcal I(r)^{[0]};M).
$$
\end{Proposition}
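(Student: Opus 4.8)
The plan is to obtain the identity by evaluating the defining polynomial $G$ at $n_i=0$. Recall from Theorem \ref{Theorem2}, and from Theorem \ref{Cor2} which extends it to all of $\NN^r$, that the mixed multiplicities of $M$ with respect to $\mathcal I(1),\ldots,\mathcal I(r)$ are encoded in the homogeneous degree $d$ polynomial
$$
G(x_1,\ldots,x_r)=\sum_{d_1+\cdots+d_r=d}\frac{1}{d_1!\cdots d_r!}e_R(\mathcal I(1)^{[d_1]},\ldots,\mathcal I(r)^{[d_r]};M)\,x_1^{d_1}\cdots x_r^{d_r},
$$
which satisfies $G(n_1,\ldots,n_r)=\lim_{m\rightarrow\infty}\lambda(M/I(1)_{mn_1}\cdots I(r)_{mn_r}M)/m^d$ for all $n_1,\ldots,n_r\in\NN$. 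Let $G'$ be the corresponding homogeneous degree $d$ polynomial in the $r-1$ variables $\{x_j:j\neq i\}$ attached, via the same theorems, to the filtrations $\mathcal I(1),\ldots,\mathcal I(i-1),\mathcal I(i+1),\ldots,\mathcal I(r)$ and the module $M$.

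The key step is the polynomial identity
$$
G(x_1,\ldots,x_{i-1},0,x_{i+1},\ldots,x_r)=G'(x_1,\ldots,x_{i-1},x_{i+1},\ldots,x_r).
$$
To prove it, fix $n_j\in\NN$ for $j\neq i$. Since $I(i)_0=R$,
$$
I(1)_{mn_1}\cdots I(i-1)_{mn_{i-1}}\,I(i)_{0}\,I(i+1)_{mn_{i+1}}\cdots I(r)_{mn_r}=I(1)_{mn_1}\cdots I(i-1)_{mn_{i-1}}\,I(i+1)_{mn_{i+1}}\cdots I(r)_{mn_r}
$$
for every $m$; dividing $\lambda(M/JM)$ by $m^d$, where $J$ is either of these equal ideals, and letting $m\rightarrow\infty$ identifies $G(n_1,\ldots,n_{i-1},0,n_{i+1},\ldots,n_r)$ (here using Theorem \ref{Cor2}) with $G'(n_1,\ldots,n_{i-1},n_{i+1},\ldots,n_r)$ (here using Theorem \ref{Theorem2}). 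As this holds for all $(n_j)_{j\neq i}\in\NN^{r-1}$, the two polynomials coincide.

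Next I would compare coefficients. Setting $x_i=0$ in the expansion of $G$ kills exactly the monomials with $d_i>0$, leaving
$$
G(x_1,\ldots,0,\ldots,x_r)=\sum_{\substack{d_1+\cdots+d_r=d\\ d_i=0}}\frac{1}{d_1!\cdots d_r!}e_R(\mathcal I(1)^{[d_1]},\ldots,\mathcal I(i)^{[0]},\ldots,\mathcal I(r)^{[d_r]};M)\prod_{j\neq i}x_j^{d_j};
$$
since $0!=1$, the scalar attached to $\prod_{j\neq i}x_j^{d_j}$ here is $\bigl(\prod_{j\neq i}d_j!\bigr)^{-1}$, which is exactly the one appearing in the analogous expansion of $G'$. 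Equating coefficients of $\prod_{j\neq i}x_j^{d_j}$ in the two polynomials yields the asserted equality for every $(d_j)_{j\neq i}$ with $\sum_{j\neq i}d_j=d$. Finally I would deduce the ``in particular'' statement by iteration: starting from $e_R(\mathcal I(1)^{[0]},\ldots,\mathcal I(i-1)^{[0]},\mathcal I(i)^{[d]},\mathcal I(i+1)^{[0]},\ldots,\mathcal I(r)^{[0]};M)$, apply the identity just proved successively to each index $j\neq i$ — each such $\mathcal I(j)$ carries the exponent $0$, and after every deletion the surviving exponents still sum to $d$ — until all $\mathcal I(j)$ with $j\neq i$ have been removed, which leaves $e_R(\mathcal I(i)^{[d]};M)=e_R(\mathcal I(i);M)$. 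I do not expect any serious obstacle: the proof is a formal manipulation of $G$. The only step that draws on the theory rather than bookkeeping is the legitimacy of evaluating $G$ on the boundary of $\NN^r$ (i.e. that $G$ still computes the limit when $n_i=0$), which is precisely Theorem \ref{Cor2}.
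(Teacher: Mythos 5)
Your plan hinges on evaluating $G$ at a point with $n_i=0$ and identifying the result with the limit of the lengths there, and you correctly flag that this is exactly the content of Theorem \ref{Cor2}. Unfortunately, this makes the argument circular: in the paper Theorem \ref{Cor2} is stated immediately after Proposition \ref{Meq} with the one-line justification ``as a consequence of the above proposition,'' so Proposition \ref{Meq} is logically upstream of Theorem \ref{Cor2}. Indeed, Theorem \ref{Theorem2} only asserts that $G$ computes the limit on $\ZZ_+^r$, and the whole substance of Proposition \ref{Meq} is precisely that the degree-$d$ homogeneous polynomial obtained by setting $x_i=0$ in $G$ coincides with the corresponding polynomial $G'$ for the $(r-1)$-tuple of filtrations; this is the statement one needs to push the equality to the boundary of $\NN^r$, not a tool one is free to invoke.

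The paper sidesteps the boundary issue entirely. After reducing to the case of $m_R$-primary ideals $I(1),\ldots,I(r)$ (via the truncation/Noetherian reduction used in Theorem \ref{Theorem12} together with Proposition \ref{Prop6}) and taking $i=r$, it exploits homogeneity: $\lim_{m\to\infty} G(mn_1,\ldots,mn_{r-1},1)/m^d = G(n_1,\ldots,n_{r-1},0)$, and the point $(mn_1,\ldots,mn_{r-1},1)$ lies in $\ZZ_+^r$, so Theorem \ref{Theorem2} applies to it. Combining this with the sandwich $Q(n_1,\ldots,n_{r-1}) \le G(n_1,\ldots,n_{r-1},1) \le Q(n_1,\ldots,n_{r-1}+\alpha)$, which comes from $I(r)\subset R$ on the left and $I(r-1)^\alpha\subset I(r)$ on the right, and passing to the limit over $m$ on the diagonal, forces $G(n_1,\ldots,n_{r-1},0)=Q(n_1,\ldots,n_{r-1})$ on $\ZZ_+^{r-1}$, hence as polynomials. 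Your coefficient comparison at the end is fine once this polynomial identity is in hand, but you need to replace the appeal to Theorem \ref{Cor2} with an argument of this kind, or else give an independent proof of Theorem \ref{Cor2} that does not route through Proposition \ref{Meq}.
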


\begin{proof} By the proof of Theorem \ref{Theorem12}, we need only show that the identities hold for $m_R$-primary ideals $I(1),\ldots,I(r)$. We may assume that $i=r$. Let $G(x_1,\ldots,x_r)\in \QQ[x_1,\ldots,x_r]$ be the homogeneous polynomial of degree $d$ such that 
$$
\lim_{m\rightarrow \infty}
\frac{\lambda(M/I(1)^{mn_1}\cdots I(r)^{mn_r}M)}{m^d}=G(n_1,\ldots,n_r)
$$
whenever $n_1,\ldots,n_r\in \ZZ_+$, and let $Q(x_1,\ldots,x_{r-1})\in \QQ[x_1,\ldots,x_{r-1}]$ be the homogeneous polynomial of degree $d$ such that 
$$
\lim_{m\rightarrow \infty}
\frac{\lambda(M/I(1)^{mn_1}\cdots I(r-1)^{mn_{r-1}}M)}{m^d}=Q(n_1,\ldots,n_{r-1})
$$
whenever $n_1,\ldots,n_{r-1}\in \ZZ_+$.  Then for all $n_1,\ldots,n_{r-1}\in \ZZ_+$,
$$
\lim_{m\rightarrow \infty}\frac{G(mn_1,\ldots,mn_{r-1},1)}{m^d}=\lim_{m\rightarrow \infty}\frac{G(mn_1,\ldots,m n_{r-1},0)}{m^d}
$$
and for $\alpha\in \ZZ_+$,
$$
\lim_{m\rightarrow \infty}\frac{Q(mn_1,\ldots,mn_{r-1}+\alpha)}{m^d}=\lim_{m\rightarrow \infty}\frac{Q(mn_1,\ldots,m n_{r-1})}{m^d}.
$$
There exists $\alpha\in \ZZ_+$ such that $I(r-1)^{\alpha}\subset I(r)$. Thus for $n_1,\ldots,n_{r-1}\in \ZZ_+$,
$$
Q(n_1,\ldots,n_{r-1})\le G(n_1,\ldots,n_{r-1},1)\le Q(n_1,\ldots,n_{r-1}+\alpha)
$$
and thus we have equality of polynomials
$$
\begin{array}{lll}
Q(n_1,\ldots,n_{r-1})&=&\lim_{m\rightarrow \infty}\frac{Q(mn_1,\ldots,mn_{r-1})}{m^d}\\
&=&\lim_{m\rightarrow\infty}\frac{G(mn_1,\ldots,mn_{r-1},0)}{m^d}\\
&=&G(n_1,\ldots,n_{r-1},0)
\end{array}
$$
and the theorem holds (for $m_R$-primary ideals).
\end{proof}

As a consequence of the above proposition, we  extend the conclusions  of Theorem \ref{Theorem2} to all $n_1,\ldots,n_r\in \NN$.

\begin{Theorem}\label{Cor2} Suppose that $R$ is a Noetherian local ring of dimension $d$ such that 
$$
\dim N(\hat R)<d
$$
 and $\mathcal I(1)=\{I(1)_i\},\ldots,\mathcal I(r)=\{I(r)_i\}$ are (not necessarily Noetherian) filtrations of $R$ by $m_R$-primary ideals. Suppose that $M$ is a finitely generated $R$-module. Then there exists a homogeneous polynomial $G(x_1,\ldots,x_r)\in \RR[x_1,\ldots,x_r]$ which is of total degree $d$  if $G$ is nonzero, such that for all $n_1,\ldots,n_r\in \NN$,
$$
\lim_{m\rightarrow\infty}\frac{\lambda(M/I(1)_{mn_1}\cdots I(r)_{nm_r}M)}{m^d}=G(n_1,\ldots,n_r). 
$$
 \end{Theorem}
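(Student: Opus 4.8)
\section*{Proof proposal for Theorem \ref{Cor2}}

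The plan is to bootstrap from Theorem \ref{Theorem2}, which already yields the conclusion for tuples in $\ZZ_+^r$, and to use Proposition \ref{Meq} to handle the coordinates that are allowed to vanish. Fix $\mathbf n=(n_1,\ldots,n_r)\in\NN^r$ and let $S=\{i:n_i\ge 1\}$ be its support. If $S=\{1,\ldots,r\}$ we are done by Theorem \ref{Theorem2}. If $S=\emptyset$ both sides are $0$ (the left since $I(1)_{0}\cdots I(r)_{0}M=M$, the right since $G$ is homogeneous of positive degree), so I may assume $\emptyset\ne S\subsetneq\{1,\ldots,r\}$.

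Since $I(i)_{0}=R$ for $i\notin S$, the module $I(1)_{mn_1}\cdots I(r)_{mn_r}M$ equals $\big(\prod_{i\in S}I(i)_{mn_i}\big)M$ for every $m$. I would then apply Theorem \ref{Theorem2} to the nonempty family $\{\mathcal I(i)\}_{i\in S}$ (the hypotheses on $R$ and $M$ are unchanged) to obtain a homogeneous polynomial $G_S\in\RR[x_i:i\in S]$, of total degree $d$ if nonzero, with
$$
\lim_{m\rightarrow\infty}\frac{\lambda\big(M/\big(\prod_{i\in S}I(i)_{mn_i}\big)M\big)}{m^d}=G_S\big((n_i)_{i\in S}\big)
$$
for all $n_i\in\ZZ_+$, $i\in S$. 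In particular the limit in the statement exists and equals $G_S((n_i)_{i\in S})$, so it remains only to check that $G_S((n_i)_{i\in S})=G(\mathbf n)$.

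For this I would use Proposition \ref{Meq}. In the expansion
$$
G(x_1,\ldots,x_r)=\sum_{d_1+\cdots+d_r=d}\frac{1}{d_1!\cdots d_r!}\,e_R(\mathcal I(1)^{[d_1]},\ldots,\mathcal I(r)^{[d_r]};M)\,x_1^{d_1}\cdots x_r^{d_r},
$$
setting $x_i=0$ for a single index $i\notin S$ kills the monomials with $d_i\ge 1$, and on the surviving monomials (those with $d_i=0$, whose remaining exponents then sum to $d$) Proposition \ref{Meq} identifies $e_R(\ldots,\mathcal I(i)^{[0]},\ldots;M)$ with the mixed multiplicity of the sub-family $\{\mathcal I(j)\}_{j\ne i}$. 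Hence $G(x_1,\ldots,x_r)\big|_{x_i=0}$ is exactly the polynomial that Theorem \ref{Theorem2} attaches to $\{\mathcal I(j)\}_{j\ne i}$ (this is also the polynomial identity $Q(n_1,\ldots,n_{r-1})=G(n_1,\ldots,n_{r-1},0)$ proved inside Proposition \ref{Meq}). Iterating this over all $i\notin S$ gives the identity of polynomials
$$
G(x_1,\ldots,x_r)\big|_{x_i=0,\ i\notin S}=G_S\big((x_i)_{i\in S}\big),
$$
so in particular $G(\mathbf n)=G_S((n_i)_{i\in S})$, which together with the previous paragraph finishes the argument.

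I expect there to be no real obstacle here: the analytic content is already contained in Theorem \ref{Theorem2} and the comparison in Proposition \ref{Meq}. The only step that needs care is the last one --- verifying that restricting $G$ to the coordinate subspace $\{x_i=0:i\notin S\}$ reproduces the $G$-polynomial of the sub-family supported on $S$ --- and this is purely a matter of matching monomial coefficients via Proposition \ref{Meq} and inducting on $|\{1,\ldots,r\}\setminus S|$.
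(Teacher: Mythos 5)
Your argument is correct and spells out exactly the approach the paper intends: the paper gives no explicit proof of Theorem~\ref{Cor2}, stating only that it follows ``as a consequence of'' Proposition~\ref{Meq}, and your reduction (split off the support $S$ of $(n_1,\ldots,n_r)$, use $I(i)_0=R$ for $i\notin S$, apply Theorem~\ref{Theorem2} to the subfamily, and match coefficients of $G|_{x_i=0,\,i\notin S}$ with $G_S$ by iterating Proposition~\ref{Meq}) is precisely the implicit content. The only caveat, which the paper shares, is the degenerate case $d=0$: there the base-point claim ``both sides are $0$ at $\mathbf n=\mathbf 0$ since $G$ is homogeneous of positive degree'' does not apply, but under $\dim N(\hat R)<d$ that case forces $d\ge 1$ in any nontrivial situation, so this is harmless.
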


The proof of the following proposition is by the same method as  the proof of Theorem \ref{Theorem12}, starting with the fact that the identities of  Proposition \ref{exact} hold for $m_R$-primary ideals by   
\cite[lemma 17.4.4]{HS}.

\begin{Proposition}\label{exact}  Suppose that $R$ is a Noetherian local ring of dimension $d$ such that $\dim N(\hat R)<d$ and $\mathcal I(1)=\{I(1)_i\},\ldots,\mathcal I(r)=\{I(r)_i\}$ are  filtrations of $R$ by $m_R$-primary ideals. Suppose that 
$$
0\rightarrow M_1\rightarrow M_2\rightarrow M_3\rightarrow 0
$$
is a short exact sequence of finitely generated $R$-modules. Then for any $d_1,\ldots,d_r\in \NN$ with $d_1+\cdots+d_r=d$, we have that
$$
\begin{array}{l}
e_R(\mathcal I(1)^{[d_1]},\ldots,\mathcal I(r)^{[d_r]};M_2)\\
=e_R(\mathcal I(1)^{[d_1]},\ldots,\mathcal I(r)^{[d_r]};M_1)+ e_R(\mathcal I(1)^{[d_1]},\ldots,\mathcal I(r)^{[d_r]};M_3).
\end{array}
$$
\end{Proposition}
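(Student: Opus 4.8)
The plan is to imitate the proof of Theorem \ref{Theorem12}: first establish the additivity identity for the $a$-th truncated filtrations $\mathcal I_a(1),\ldots,\mathcal I_a(r)$ for every $a\in\ZZ_+$, and then pass to the limit $a\to\infty$ by means of Proposition \ref{Prop6} applied to each of $M_1$, $M_2$, $M_3$.

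So I would fix $a\in\ZZ_+$. The truncated filtrations $\mathcal I_a(j)$ are Noetherian, so there is $f_a\in\ZZ_+$ with $I_a(j)_{f_am}=(I_a(j)_{f_a})^m$ for all $m\ge 0$ and all $1\le j\le r$; set $K_j:=I_a(j)_{f_a}$, an $m_R$-primary ideal. The essential point is that $f_a$ and the ideals $K_1,\ldots,K_r$ depend only on the filtrations and not on any choice of module. For a finitely generated $R$-module $N$ and $n_1,\ldots,n_r\in\ZZ_+$, Lemma \ref{Lemma0} gives
\[
\lim_{m\to\infty}\frac{\lambda(N/I_a(1)_{mn_1}\cdots I_a(r)_{mn_r}N)}{m^d}=\frac{1}{f_a^d}\lim_{m\to\infty}\frac{\lambda(N/K_1^{mn_1}\cdots K_r^{mn_r}N)}{m^d}.
\]
By Proposition \ref{Prop3} the left-hand side is a homogeneous polynomial of degree $d$ in $n_1,\ldots,n_r$ (the zero polynomial when $\dim N<d$) whose coefficient of $n_1^{d_1}\cdots n_r^{d_r}$ is $\frac{1}{d_1!\cdots d_r!}e_R(\mathcal I_a(1)^{[d_1]},\ldots,\mathcal I_a(r)^{[d_r]};N)$, and by the classical theory of mixed multiplicities of $m_R$-primary ideals (\cite[Theorem 17.4.2, Definition 17.4.3]{HS}) the corresponding coefficient of the right-hand side is $\frac{1}{f_a^d}\cdot\frac{1}{d_1!\cdots d_r!}e_R(K_1^{[d_1]},\ldots,K_r^{[d_r]};N)$. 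Comparing these two polynomials, exactly as in the proof of Theorem \ref{Theorem12} and using Lemma \ref{Prop1}, I would deduce for all $d_1+\cdots+d_r=d$ that
\[
e_R(\mathcal I_a(1)^{[d_1]},\ldots,\mathcal I_a(r)^{[d_r]};N)=\frac{1}{f_a^d}\,e_R(K_1^{[d_1]},\ldots,K_r^{[d_r]};N).
\]

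Next I would invoke \cite[Lemma 17.4.4]{HS}, which is precisely the additivity of mixed multiplicities of the $m_R$-primary ideals $K_1,\ldots,K_r$ on the short exact sequence $0\to M_1\to M_2\to M_3\to 0$:
\[
e_R(K_1^{[d_1]},\ldots,K_r^{[d_r]};M_2)=e_R(K_1^{[d_1]},\ldots,K_r^{[d_r]};M_1)+e_R(K_1^{[d_1]},\ldots,K_r^{[d_r]};M_3).
\]
Dividing by $f_a^d$ and using the previous display with $N=M_1,M_2,M_3$ gives the additivity identity for the truncated filtrations $\mathcal I_a(1),\ldots,\mathcal I_a(r)$, for every $a\in\ZZ_+$. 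Finally, Proposition \ref{Prop6} applied to each of $M_1$, $M_2$, $M_3$ yields
\[
\lim_{a\to\infty}e_R(\mathcal I_a(1)^{[d_1]},\ldots,\mathcal I_a(r)^{[d_r]};N)=e_R(\mathcal I(1)^{[d_1]},\ldots,\mathcal I(r)^{[d_r]};N),
\]
so letting $a\to\infty$ in the additivity identity for the truncations produces the asserted identity for the filtrations $\mathcal I(1),\ldots,\mathcal I(r)$.

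I do not anticipate a genuine difficulty: the argument is a transcription of the proof of Theorem \ref{Theorem12}, with \cite[Lemma 17.4.4]{HS} playing the role there played by the Minkowski inequalities for $m_R$-primary ideals. The points that need care are the bookkeeping of the factorial normalizations in the definition of the mixed multiplicities, the verification (supplied by Proposition \ref{Prop3} and by the classical $m_R$-primary theory) that the relevant limits are honest homogeneous polynomials so that their coefficients may be equated, and the observation that a single $f_a$, hence a single tuple $K_1,\ldots,K_r$ of $m_R$-primary ideals, serves simultaneously for all three modules $M_1,M_2,M_3$; this last point is what makes the reduction to the $m_R$-primary case compatible with the short exact sequence, and it is the thing I would check most carefully.
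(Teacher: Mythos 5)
Your proof is correct and takes essentially the route the paper sketches: for each fixed truncation level $a$, Lemma \ref{Lemma0}, Proposition \ref{Prop3}, and Lemma \ref{Prop1} convert the desired identity into the classical additivity \cite[Lemma 17.4.4]{HS} for the $m_R$-primary ideals $K_j=I_a(j)_{f_a}$, and Proposition \ref{Prop6} then lets you pass to the limit $a\to\infty$. You also rightly flag the key bookkeeping points — that a single $f_a$ (depending only on the filtrations) serves all three modules simultaneously, and that the reduction to $M=R$ used in the proof of Theorem \ref{Theorem12} is neither available nor needed here.
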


The following Associativity Formula is proven for $m_R$-primary ideals in  \cite[Theorem 17.4.8]{HS}.

\begin{Theorem} (Associativity Formula)\label{Theorem11} Suppose that $R$ is a Noetherian local ring of dimension $d$ with $\dim N(\hat R)<d$. Suppose $\mathcal I(j)=\{I(j)_i\}$ for $1\le j\le r$ are filtrations of $R$ by $m_R$-primary ideals and  $M$ is a finitely generated $R$-module  and $\mathcal I(1)=\{I(1)_i\},\ldots \mathcal I(r)=\{I(r)_i\}$ are filtrations of $R$ by $m_R$-primary ideals.  Let $P$ be a minimal prime of $R$. Then $\dim N(\widehat{R/P})<d$.
For any $d_1,\ldots,d_r\in \NN$ with $d_1+\cdots+d_r=d$, 
$$
e_R(\mathcal I(1)^{[d_1]},\ldots,\mathcal I(r)^{[d_r]};M)=
\sum\lambda_{R_P}(M_P)e_{R/P}((\mathcal I(1)R/P)^{[d_1]},\ldots,(\mathcal I(r)R/P)^{[d_r]};R/P)
$$
where the sum is  over the minimal primes of $R$ such that $\dim R/P=d$ and $\mathcal I(j)R/P=\{I(j)_iR/P\}$.
\end{Theorem}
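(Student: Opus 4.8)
The plan is to run the classical prime-filtration proof of the associativity formula in the filtration setting, using Proposition \ref{exact} in place of additivity of lengths and Theorem \ref{Theorem2} in place of the existence of Hilbert polynomials, and to supply the one new ingredient: that $\dim N(\widehat{R/P})<d$ for every minimal prime $P$ of $R$. First I would establish this nilradical bound. If $\dim R/P<d$ it is immediate, since $\dim N(\widehat{R/P})\le\dim\widehat{R/P}=\dim R/P<d$. If $\dim R/P=d$, write $\widehat{R/P}=\hat R/P\hat R$ and let $\mathfrak q$ be a prime of $\hat R$ minimal over $P\hat R$ with $\dim\hat R/\mathfrak q=d$; these are exactly the maximal-dimensional minimal primes of $\widehat{R/P}$, and at least one exists because $\dim\widehat{R/P}=\dim R/P=d$. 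From ${\rm ht}(\mathfrak q)+\dim\hat R/\mathfrak q\le\dim\hat R=d$ we get ${\rm ht}(\mathfrak q)=0$, so $\mathfrak q$ is a minimal prime of $\hat R$ with $\dim\hat R/\mathfrak q=d$. By the hypothesis $\dim N(\hat R)<d$ together with the criterion recalled in the introduction, $\hat R_{\mathfrak q}$ is reduced, hence a field (it is Artinian local), so $(\hat R/P\hat R)_{\mathfrak q}$ is a nonzero quotient of a field and therefore reduced. Applying the criterion in $\widehat{R/P}$ gives $\dim N(\widehat{R/P})<d$; in particular Theorem \ref{Theorem2} applies to the $d$-dimensional ring $R/P$ when $\dim R/P=d$.

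Next I would reduce to the case $M=R/P$. Choose a filtration $0=M_0\subset\cdots\subset M_\ell=M$ of $R$-modules with $M_i/M_{i-1}\cong R/\mathfrak p_i$, $\mathfrak p_i\in{\rm Spec}(R)$. Iterating Proposition \ref{exact},
$$
e_R(\mathcal I(1)^{[d_1]},\ldots,\mathcal I(r)^{[d_r]};M)=\sum_{i=1}^{\ell}e_R(\mathcal I(1)^{[d_1]},\ldots,\mathcal I(r)^{[d_r]};R/\mathfrak p_i).
$$
If $\dim R/\mathfrak p_i<d$, then for fixed $(n_1,\ldots,n_r)\in\ZZ_+^r$, choosing $c\in\ZZ_+$ with $m_R^c\subset I(1)_1$ gives $m_R^{cn_1m}\subset I(1)_{mn_1}\subset I(1)_{mn_1}\cdots I(r)_{mn_r}$, so
$$
0\le\frac{\lambda\big((R/\mathfrak p_i)/I(1)_{mn_1}\cdots I(r)_{mn_r}(R/\mathfrak p_i)\big)}{m^d}\le\frac{\lambda\big((R/\mathfrak p_i)/m_R^{cn_1m}(R/\mathfrak p_i)\big)}{m^d},
$$
and the right side tends to $0$ since $\dim R/\mathfrak p_i<d$; hence the polynomial attached to $R/\mathfrak p_i$ by Theorem \ref{Theorem2} vanishes on $\ZZ_+^r$, so it is $0$ and $e_R(\mathcal I(1)^{[d_1]},\ldots;R/\mathfrak p_i)=0$. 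Thus only the $\mathfrak p_i$ equal to a minimal prime $P$ of $R$ with $\dim R/P=d$ contribute. Localizing the filtration at such a $P$, the piece $(R/\mathfrak p_i)_P$ is the residue field of $R_P$ if $\mathfrak p_i=P$ and $0$ otherwise (as $P$ is minimal), so $\#\{i:\mathfrak p_i=P\}=\lambda_{R_P}(M_P)$.

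Finally I would carry out the change of ring $R\to R/P$. For $P$ as above, $\mathcal I(j)R/P=\{I(j)_iR/P\}$ is a filtration of $R/P$ by $m_{R/P}$-primary ideals, and $I(1)_{mn_1}\cdots I(r)_{mn_r}(R/P)=(I(1)_{mn_1}R/P)\cdots(I(r)_{mn_r}R/P)$, both sides being the image in $R/P$ of the ideal $I(1)_{mn_1}\cdots I(r)_{mn_r}$. Since the length over $R$ of an $R/P$-module equals its length over $R/P$, the defining limits of the mixed multiplicities of $R/P$ computed over $R$ and over $R/P$ coincide, so (both being legitimate, the latter by the first step) they determine the same homogeneous polynomial, and therefore $e_R(\mathcal I(1)^{[d_1]},\ldots;R/P)=e_{R/P}((\mathcal I(1)R/P)^{[d_1]},\ldots;R/P)$. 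Substituting this and the count $\#\{i:\mathfrak p_i=P\}=\lambda_{R_P}(M_P)$ into the displayed sum yields the formula. I expect the only genuinely delicate step to be the first one: the rest is the standard prime-filtration argument combined with Proposition \ref{exact} and Theorem \ref{Theorem2}, whereas transporting the hypothesis $\dim N(\hat R)<d$ down to $R/P$ hinges on the observation that a maximal-dimensional minimal prime of $\hat R/P\hat R$ must, for dimension reasons, already be a minimal prime of $\hat R$.
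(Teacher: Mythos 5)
Your proposal is correct, and it takes a genuinely different route from the paper. For the nilradical bound, you localize $\hat R$ at a prime $\mathfrak q$ minimal over $P\hat R$ with $\dim\hat R/\mathfrak q=d$, observe that ${\rm ht}(\mathfrak q)+\dim\hat R/\mathfrak q\le d$ forces ${\rm ht}(\mathfrak q)=0$ so that $\mathfrak q$ is a minimal prime of $\hat R$, and then apply the localization criterion for $\dim N(\cdot)<d$ to conclude $(\widehat{R/P})_{\mathfrak q}$ is a field. The paper instead embeds $R/N(R)$ into $S=\bigoplus_iR/P_i$, uses a nonzerodivisor $x$ with $xS\subset R/N(R)$, and pushes $N(\hat S)$ into $N(\widehat{R/N(R)})$ via flatness of completion. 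For the formula itself, you take the classical prime-filtration route through Proposition \ref{exact}, whereas the paper applies its reduction lemmas in sequence (Lemma \ref{Lemma3} to kill $N(R)M$, Lemma \ref{Lemma5} to pass to $\bigoplus_i M/P_iM$, then Lemma \ref{Lemma4} to extract $\lambda_{R_{P_i}}(M_{P_i})$). Both are sound; yours mirrors the standard proof of the classical associativity formula and is a clean alternative once Proposition \ref{exact} is available, while the paper's version reuses machinery it already built.

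One slip needs repair. The chain $m_R^{cn_1m}\subset I(1)_{mn_1}\subset I(1)_{mn_1}\cdots I(r)_{mn_r}$ is wrong in the second step: a product of ideals is contained in, not contains, each factor. The fix is exactly as in the paper's Lemma \ref{Lemma3}: for fixed $(n_1,\ldots,n_r)$ choose $c$ with $m_R^c\subset I(1)_{n_1}\cdots I(r)_{n_r}$, so that $m_R^{cm}\subset\big(I(1)_{n_1}\cdots I(r)_{n_r}\big)^m\subset I(1)_{mn_1}\cdots I(r)_{mn_r}$ by the filtration property, and then $\lambda\big((R/\mathfrak p_i)/I(1)_{mn_1}\cdots I(r)_{mn_r}(R/\mathfrak p_i)\big)\le\lambda\big((R/\mathfrak p_i)/m_R^{cm}(R/\mathfrak p_i)\big)$, which is $O(m^{\dim R/\mathfrak p_i})=o(m^d)$ as you intended.
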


\begin{proof} Let $\overline R=R/N(R)$. We have that $N(\widehat{\overline R})=N(\hat R)\widehat{\overline R}$ so $\dim N(\widehat{\overline  R})<d=\dim \widehat{\overline R}$.

Let $P_1,\ldots,P_s$ be the minimal primes of $R$ and $S=\bigoplus_{i=1}^sR/P_i$. As in the proof of Lemma \ref{Lemma5}, we have a natural inclusion $\overline R\rightarrow S$, and there exists a non zero divisor $x\in \overline R$ such that $xS\subset \overline R$. Further, $x$ is a non zerodivisor on $S$ since $S$ is a subring of the total quotient ring of $\overline R$. 
 Since completion is flat, we have an induced inclusion
 $$
 \widehat{\overline R}\rightarrow \hat S=\bigoplus_{i=1}^s \widehat{\overline R/P_i}.
 $$
 We have that $xN(\hat S)\subset N(\widehat{\overline R})$. Now $x$ is a non zero divisor on $\hat S$ since it is on $S$ and completion is flat. Thus $\dim N(\hat S)\le \dim N(\widehat{\overline R})<d$, and so
 $\dim N(\widehat{R/P_i})<d$ for all $i$.
 
 By Theorem \ref{Theorem2} and  Lemmas \ref{Lemma3} and  \ref{Prop1}, we have that 
 $$
 e_R(\mathcal I(1)^{[d_1]},\ldots,\mathcal I(r)^{[d_r]};M)=e_{\overline R}(\overline{\mathcal I}(1)^{[d_1]},\ldots,\overline{\mathcal I}(r)^{[d_r]};\overline M)
 $$
 where $\overline{\mathcal I}(j)=\{I(j)_i\overline R\}$ for $1\le j\le r$ and $\overline M=M/N(R)M$.
 
 By Theorem \ref{Theorem2} and Lemmas \ref{Lemma5} and  \ref{Prop1},
 $$
 e_{\overline R}(\overline{\mathcal I}(1)^{[d_1]},\ldots,\overline{\mathcal I}(r)^{[d_r]};\overline M)
 =\sum_{i=1}^se_{R/P_i}((\mathcal I(1)R/P_i)^{[d_1]},\ldots,(\mathcal I(r)R/P_i)^{[d_r]};M/P_iM).
 $$
 Now for $1\le i\le r$,
 $$
 \begin{array}{l}
 e_{R/P_i}((\mathcal I(1)R/P_i)^{[d_1]},\ldots,(\mathcal I(r)R/P_i)^{[d_r]};M/P_iM)\\
 =\lambda_{R_{P_i}}(M_{P_i})e_{R/P_i}((\mathcal I(1)R/P_i)^{[d_1]},\ldots,(\mathcal I(r)R/P_i)^{[d_r]};R/P_i)
 \end{array}
 $$
 by Lemma \ref{Lemma4}, since $R_{P_i}=Q(R/P_i)$.
\end{proof}

The following theorem generalizes  \cite[Proposition 11.2.1]{HS} for $m_R$-primary ideals to filtrations of $R$ by $m_R$-primary ideals.

\begin{Theorem}\label{Theorem13} Suppose that $R$ is a Noetherian $d$-dimensional local ring such that 
$$
\dim N(\hat R)<d
$$
 and $M$ is a finitely generated $R$-module. Suppose $\mathcal I'=\{I'_i\}$ and $\mathcal I=\{I_i\}$ are filtrations of $R$ by $m_R$-primary ideals. Suppose $\mathcal I'\subset \mathcal I$ ($
I'_i\subset I_i$ for all $i$) and the ring $\bigoplus_{n\ge 0}I_n$ is integral over $\bigoplus I'_n$. Then 
$$
e_R(\mathcal I;M)=e_R(\mathcal I';M).
$$
\end{Theorem}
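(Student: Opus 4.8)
The plan is to prove the statement first for Noetherian filtrations, where the integrality hypothesis yields a clean numerical containment, and then to pass to the general (possibly non-Noetherian) case by approximating $\mathcal I'$ by its truncations and replacing each truncation by the Rees algebra of its integral closure. For the Noetherian case, suppose $\mathcal F'=\{F'_n\}\subseteq\mathcal F=\{F_n\}$ are Noetherian filtrations of $m_R$-primary ideals and $\bigoplus_n F_nt^n$ is integral over $\bigoplus_n F'_nt^n$. Since $\bigoplus_n F_nt^n$ is a finitely generated algebra over $R\subseteq\bigoplus_n F'_nt^n$ and integral over $\bigoplus_n F'_nt^n$, it is a finite $\bigoplus_n F'_nt^n$-module; choosing homogeneous module generators of degrees at most $k$ gives $F'_n\subseteq F_n\subseteq F'_{n-k}$ for all $n\ge k$, hence $\lambda(M/F'_{n-k}M)\le\lambda(M/F_nM)\le\lambda(M/F'_nM)$. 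Dividing by $n^d$ and letting $n\to\infty$ (the limits exist by Proposition \ref{Prop3}, and $\lim_n\lambda(M/F'_{n-k}M)/n^d=e_R(\mathcal F';M)/d!$) forces $e_R(\mathcal F;M)=e_R(\mathcal F';M)$.

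For the general statement I would first reduce, exactly as in the proof of Theorem \ref{Theorem2} (replacing $R$ by $\hat R$ and applying Lemmas \ref{Lemma3}, \ref{Lemma5} and \ref{Lemma4}), to the case in which $R$ is a complete --- hence excellent --- analytically irreducible local domain and $M=R$; at each step one checks that the hypothesis that $\bigoplus_n I_nt^n$ is integral over $\bigoplus_n I'_nt^n$ is preserved, being stable under the flat base change to $\hat R$ and under passage to the quotients $R/P_i$. Write $A=\bigoplus_n I_nt^n$ and $A'=\bigoplus_n I'_nt^n$ inside $R[t]$, and let $\widetilde{A'}=\bigoplus_n\widetilde{I'_n}\,t^n$ be the integral closure of $A'$ in $R[t]$; this is the Rees algebra of a filtration $\widetilde{\mathcal I'}=\{\widetilde{I'_n}\}$ of $m_R$-primary ideals, and the hypotheses say precisely that $\mathcal I'\subseteq\mathcal I\subseteq\widetilde{\mathcal I'}$. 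Since $\lambda(R/J_n)$ is non-increasing in the filtration $\{J_n\}$, we get $e_R(\widetilde{\mathcal I'};R)\le e_R(\mathcal I;R)\le e_R(\mathcal I';R)$, so it suffices to show $e_R(\mathcal I';R)=e_R(\widetilde{\mathcal I'};R)$, that is (as $\mathcal I'\subseteq\widetilde{\mathcal I'}$ already gives ``$\ge$''), that $e_R(\mathcal I';R)\ge e_R(\widetilde{\mathcal I'};R)$.

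For this, fix $a\in\ZZ_+$ and let $\mathcal I'_a$ be the $a$-th truncation of $\mathcal I'$ (Definition \ref{trunc}). Its Rees algebra $A'_a$ is finitely generated over the excellent domain $R$, hence excellent, so the integral closure $\widetilde{A'_a}$ of $A'_a$ in $R[t]$ is module-finite over $A'_a$; therefore $\widetilde{A'_a}$ is the Rees algebra of a Noetherian filtration $\mathcal J_a$ of $m_R$-primary ideals, and $\mathcal I'_a\subseteq\mathcal J_a\subseteq\widetilde{\mathcal I'}$, the last inclusion because $A'_a\subseteq A'$. The Noetherian case applied to $\mathcal I'_a\subseteq\mathcal J_a$ gives $e_R(\mathcal I'_a;R)=e_R(\mathcal J_a;R)$, and monotonicity gives $e_R(\mathcal J_a;R)\ge e_R(\widetilde{\mathcal I'};R)$. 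Letting $a\to\infty$ and invoking Proposition \ref{Prop6} (with $r=1$), which gives $e_R(\mathcal I'_a;R)\to e_R(\mathcal I';R)$, we conclude $e_R(\mathcal I';R)\ge e_R(\widetilde{\mathcal I'};R)$, as needed.

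The step I expect to demand the most care is the bookkeeping around integral closures: checking that $\widetilde{A'}$ and each $\widetilde{A'_a}$ are graded Rees algebras of filtrations of $m_R$-primary ideals of $R$, that the module-finiteness of $\widetilde{A'_a}$ over $A'_a$ really does follow from the excellence gained by completing, and that the integrality hypothesis survives each of the reductions of Theorem \ref{Theorem2}. The one conceptual point is that one should not compare $\mathcal I$ and $\mathcal I'$ directly, but instead observe that $\mathcal I$ is sandwiched between $\mathcal I'$ and the integral-closure filtration $\widetilde{\mathcal I'}$, so that everything comes down to the equality $e_R(\mathcal I';R)=e_R(\widetilde{\mathcal I'};R)$, which the truncation argument then provides.
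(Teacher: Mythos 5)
Your Noetherian case is correct (the containments $F'_n\subseteq F_n\subseteq F'_{n-k}$ and the resulting squeeze are fine), and the reduction to an analytically irreducible complete domain with $M=R$ works, since integrality of $\bigoplus I_nt^n$ over $\bigoplus I'_nt^n$ passes to $\hat R$ by flatness and to $R/P_i$ by taking quotients. However, the final step has a genuine logical gap. You correctly observe that $\mathcal I'\subseteq\mathcal I\subseteq\widetilde{\mathcal I'}$ gives $e_R(\widetilde{\mathcal I'};R)\le e_R(\mathcal I;R)\le e_R(\mathcal I';R)$, so what must be shown is the \emph{reverse} inequality $e_R(\mathcal I';R)\le e_R(\widetilde{\mathcal I'};R)$ (the inequality $\ge$ is the trivial one coming from $\mathcal I'\subseteq\widetilde{\mathcal I'}$). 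But your chain $e_R(\mathcal I'_a;R)=e_R(\mathcal J_a;R)\ge e_R(\widetilde{\mathcal I'};R)$ followed by $a\to\infty$ delivers exactly $e_R(\mathcal I';R)\ge e_R(\widetilde{\mathcal I'};R)$ — the inequality you already had. So as written the argument is circular; the sentence ``as $\mathcal I'\subseteq\widetilde{\mathcal I'}$ already gives $\ge$, [it suffices to show] that $e_R(\mathcal I';R)\ge e_R(\widetilde{\mathcal I'};R)$'' should of course read $\le$, and the subsequent deduction does not produce $\le$.

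The gap can be repaired, but it needs a different limiting direction. One way: for any fixed $b$, each $\widetilde{I'_n}$ with $n\le b$ is a finitely generated ideal, so all of its generators are integral over a single $A'_a$ with $a\gg 0$; hence the $b$-th truncation $\widetilde{\mathcal I'}_b$ satisfies $\widetilde{\mathcal I'}_b\subseteq\mathcal J_a$ for $a\ge a_0(b)$, giving $e_R(\widetilde{\mathcal I'}_b;R)\ge e_R(\mathcal J_a;R)=e_R(\mathcal I'_a;R)$; letting $a\to\infty$ and then $b\to\infty$ (using Proposition \ref{Prop6} for $\widetilde{\mathcal I'}$) yields $e_R(\widetilde{\mathcal I'};R)\ge e_R(\mathcal I';R)$ as required. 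The paper takes a different route: it forms $\mathcal A_a=\{I_{a,i}+I'_{\overline a,i}\}$, which is sandwiched as $\mathcal I_a\subseteq\mathcal A_a\subseteq\mathcal I$ and is finite over $\mathcal I'_{\overline a}$; Proposition \ref{Prop6} applied to the truncations $\mathcal I_a$ of $\mathcal I$ (via the Okounkov-body volume estimate of Proposition \ref{Prop2}) forces $e_R(\mathcal A_a;R)\to e_R(\mathcal I;R)$, while the Noetherian case gives $e_R(\mathcal A_a;R)=e_R(\mathcal I'_{\overline a};R)\to e_R(\mathcal I';R)$. Your integral-closure filtration $\widetilde{\mathcal I'}$ is a nice alternative to the paper's $\mathcal A_a$, but the convergence $e_R(\mathcal J_a)\to e_R(\widetilde{\mathcal I'})$ you implicitly need is not automatic from $\mathcal J_a\subseteq\widetilde{\mathcal I'}$ alone and must be argued as above.
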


The converse of Theorem \ref{Theorem13} is false. Taking $R$ to be a power series ring $R=k[[x_1,\ldots,x_d]]$ over a field $k$, let $I_i=m_R^i$ and $I'_i=m_R^{i+1}$. Then $e_R(\mathcal I;R)=e_R(\mathcal I';R)$
but $\bigoplus_{n\ge 0}I_n$ is not integral $\bigoplus_{n\ge 0}I'_n$. This is in contrast to a theorem of Rees, in \cite{R} and  \cite[Theorem 11.3.1]{HS}, showing that if 
$R$ is a formally equidimensional Noetherian local ring and $I'\subset I$ are $m_R$-primary ideals  then $\bigoplus_{n\ge 0}I^n$ is integral over $\bigoplus_{n\ge 0}(I')^n$ if and only if $e_R(I;R)=e_R(I';R)$.

\begin{proof}(of Theorem \ref{Theorem13}) Step 1). We first observe that if $I'\subset I$ are $m_R$-primary ideals and $\bigoplus_{n\ge 0}I^n$ is integral over $\bigoplus_{n\ge 0}(I')^n$, then, by \cite[Theorem 8.2.1, Corollary 1.2.5 and Proposition 11.2.1]{HS},   $e_R(I;R)=e_R(I';R)$.

Step 2).  Suppose $\mathcal I=\{I_i\}$ and $\mathcal I'=\{I'_i\}$ are Noetherian filtrations of $R$ by $m_R$-primary ideals and $\mathcal I'\subset \mathcal I$. Suppose $b\in \ZZ_+$. 
Define $\mathcal I^{(b)}=\{I^{(b)}_i\}$ where $I^{(b)}_i=I_{bi}$ and $(\mathcal I')^{(b)}=\{(I')^{(b)}_i\}$ where $(I')^{(b)}_i=(I')_{bi}$.
Then from Lemma \ref{Lemma0}  we deduce that
$$
e_R(\mathcal I;R)=e_R(\mathcal I';R)\mbox{ if and only if }e_R(\mathcal I^{(b)};R)=e_R((\mathcal I')^{(b)};R).
$$

Step 3). Suppose $\mathcal I'\subset \mathcal I$ are filtrations of $R$ by $m_R$-primary ideals. Suppose $a\in \ZZ_+$. Let $\mathcal I_a=\{I_{a,n}\}$ be the $a$-th truncated filtration of $\mathcal I$ defined in Definition \ref{trunc}. Then there exists $\overline a\in \ZZ$ such that every element of $\bigoplus_{n\ge 0}I_{a,n}$ (considered as a subring of $\bigoplus_{n\ge 0}I_n$) is integral over $\bigoplus_{n\ge 0}I'_{\overline a,n}$, where $\mathcal I'_{\overline a}=\{I'_{\overline a,i}\}$ is the $\overline a$-th truncated filtration of $\mathcal I'$  defined in Definition \ref{trunc}.
 
 Define a Noetherian filtration $\mathcal A_a=\{A_{a,i}\}$ of $R$ by $m_R$-primary ideals 
 by $A_{a,i}=I_{a,i}+I'_{\overline a,i}$. Thus we have inclusions of graded rings $\bigoplus_{n\ge 0}I'_{\overline a,n}\subset \bigoplus_{n\ge 0}A_{a,n}$ and $\bigoplus_{n\ge 0}A_{a,n}$ is finite over $\bigoplus_{n\ge 0}I'_{\overline a,n}$. By Steps 2) and 1),
$$
e_R(\mathcal I'_{\overline a};R)=e_R(\mathcal A_a;R).
$$
By Proposition \ref{Prop2},
$$
\lim_{a\rightarrow \infty}e_R(\mathcal I'_{\overline a};R)=e_R(\mathcal I';R)
$$
and thus 
$$
\lim_{a\rightarrow \infty}e_R(\mathcal A_a;R)=e_R(\mathcal I';R).
$$

Step 4) Let notation be as in the proof of Proposition \ref{Prop2}, but taking $J_i=I_i$ and $J(a)_i=I'_{ai}$. Define 
$$
\begin{array}{lll}
\Gamma(\mathcal A_a)^{(t)}&=&\{(m_1,\ldots,m_d,i)\in \NN^{d+1}\mid
\dim_kA_{a,i}\cap K_{m_1\lambda_1+\cdots+m_d\lambda_d}/A_{a,i}\cap K^+_{m_1\lambda_1+\cdots+m_d\lambda_d}\ge t\\
&&\mbox{ and }m_1+\cdots+m_d\le \beta i\}.
\end{array}
$$
Now $\Gamma(a)^{(t)}\subset \Gamma(\mathcal A_a)^{(t)}\subset \Gamma^{(t)}$ for all $t$, so
$$
\Delta(\Gamma(a)^{(t)})\subset \Delta(\Gamma(\mathcal A_a)^{(t)})\subset \Delta(\Gamma^{(t)})
$$
for all $a$. By (\ref{eq12}),
$$
\lim_{a\rightarrow \infty} {\rm Vol}(\Delta(\Gamma(a)^{(t)}))={\rm Vol}(\Delta(\Gamma^{(t)})),
$$
and so 
$$
\lim_{a\rightarrow \infty} {\rm Vol}(\Delta(\Gamma(A_a)^{(t)}))={\rm Vol}(\Delta(\Gamma^{(t)})).
$$
Thus
$$
\lim_{a\rightarrow \infty}e_R(\mathcal A_a;R)=e_R(\mathcal I;R)
$$
by (\ref{eq8}) of the proof of Proposition \ref{Prop2} applied to $\mathcal A_a$. 

Step 5). We have that $e_R(\mathcal I;R)=e_R(\mathcal I';R)$ by Steps 3) and 4). Now $e_R(\mathcal I;M)=e_R(\mathcal I';M)$ by Theorem \ref{Theorem11} (with $r=1$). 
\end{proof}

\begin{Corollary} Suppose $R$ is a Noetherian $d$-dimensional local ring such that 
$$
\dim N(\hat R)<d
$$
 and $M$ is a finitely generated $R$-module. Suppose that $\mathcal I(j)'=\{I(j)'_i\}$ and $\mathcal I(j)=\{I(j)_i\}$ are filtrations of $R$ by $m_R$-primary ideals for $1\le j\le r$. Suppose $\mathcal I(j)'\subset \mathcal I(j)$ for $1\le j\le r$ and the ring
$$
\bigoplus_{n_1,\ldots,n_r\ge 0}I(1)_{n_1}I(2)_{n_2}\cdots I(r)_{n_r}
$$
 is integral over 
 $$
 \bigoplus_{n_1,\ldots,n_r\ge 0}I(1)'_{n_1}I(2)'_{n_2}\cdots I(r)'_{n_r}.
 $$
  Then
\begin{equation}\label{eq15}
e_R(\mathcal I(q)^{[d_1]},\mathcal I(2)^{[d_2]},\ldots,\mathcal I(r)^{[d_r]};M)=
e_R((\mathcal I(1)')^{[d_1]},(\mathcal I(2)')^{[d_2]},\ldots,(\mathcal I(r)')^{[d_r]};M)
\end{equation}
for all $d_1,\ldots,d_r\in \NN$ with $d_1+\cdots+d_r=d$.
\end{Corollary}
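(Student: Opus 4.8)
The plan is to reduce everything to the single-filtration statement already established in Theorem \ref{Theorem13}. Fix $n_1,\ldots,n_r\in\ZZ_+$ and form the two filtrations $\mathcal J=\{J_i\}$ and $\mathcal J'=\{J'_i\}$ of $R$ given by $J_i=I(1)_{in_1}\cdots I(r)_{in_r}$ and $J'_i=I(1)'_{in_1}\cdots I(r)'_{in_r}$. Since each $n_j$ is positive, the ideals $I(j)_{in_j}$ and $I(j)'_{in_j}$ are $m_R$-primary for $i\ge 1$, so $\mathcal J$ and $\mathcal J'$ really are filtrations of $R$ by $m_R$-primary ideals (the chain conditions $J_{i+1}\subset J_i$ and $J_iJ_k\subset J_{i+k}$, and $J_0=R$, are immediate from the corresponding facts for the $\mathcal I(j)$), and $\mathcal J'\subset\mathcal J$ because $I(j)'_i\subset I(j)_i$ for all $i,j$.

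The first substantive step is to check that $\bigoplus_{i\ge 0}J_i$ is integral over $\bigoplus_{i\ge 0}J'_i$. Write $\mathcal R=\bigoplus_{n_1,\ldots,n_r\ge 0}I(1)_{n_1}\cdots I(r)_{n_r}$ and $\mathcal R'=\bigoplus_{n_1,\ldots,n_r\ge 0}I(1)'_{n_1}\cdots I(r)'_{n_r}$, an $\NN^r$-graded ring and a graded subring of it. The Rees-type ring $\bigoplus_i J_i$ is exactly the diagonal graded subring $\bigoplus_i\mathcal R_{(in_1,\ldots,in_r)}$ of $\mathcal R$, and likewise $\bigoplus_i J'_i=\bigoplus_i\mathcal R'_{(in_1,\ldots,in_r)}$. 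Let $x\in\mathcal R_{(in_1,\ldots,in_r)}$ be homogeneous; by hypothesis there is a monic relation $x^m+a_1x^{m-1}+\cdots+a_m=0$ with $a_j\in\mathcal R'$. Decomposing each $a_j$ into its $\NN^r$-homogeneous components and comparing multidegrees, only the component $b_j$ of $a_j$ of multidegree $j(in_1,\ldots,in_r)$ can contribute to the multidegree-$m(in_1,\ldots,in_r)$ part of the relation, so $x^m+b_1x^{m-1}+\cdots+b_m=0$ with $b_j\in\mathcal R'_{j(in_1,\ldots,in_r)}=J'_{ji}=(\bigoplus_i J'_i)_j$. Thus $x$ is integral over $\bigoplus_i J'_i$; since $\bigoplus_i J_i$ is generated as an algebra by its homogeneous elements and the integral elements form a ring, $\bigoplus_i J_i$ is integral over $\bigoplus_i J'_i$.

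Now Theorem \ref{Theorem13} applies to $\mathcal J'\subset\mathcal J$ and gives $e_R(\mathcal J;M)=e_R(\mathcal J';M)$. Let $G$ and $G'$ be the homogeneous degree-$d$ polynomials of Theorem \ref{Theorem2} associated with $\mathcal I(1),\ldots,\mathcal I(r)$ and with $\mathcal I(1)',\ldots,\mathcal I(r)'$ respectively. Since $J_mM=I(1)_{mn_1}\cdots I(r)_{mn_r}M$, we have $e_R(\mathcal J;M)=\lim_{m\to\infty}d!\,\lambda(M/J_mM)/m^d=d!\,G(n_1,\ldots,n_r)$, and similarly $e_R(\mathcal J';M)=d!\,G'(n_1,\ldots,n_r)$. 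Hence $G(n_1,\ldots,n_r)=G'(n_1,\ldots,n_r)$ for every $(n_1,\ldots,n_r)\in\ZZ_+^r$; as $\ZZ_+^r$ is Zariski dense, $G=G'$ as polynomials, so all their coefficients coincide, and comparing the coefficient of $n_1^{d_1}\cdots n_r^{d_r}$ yields (\ref{eq15}).

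I expect the only real point requiring care is the multidegree bookkeeping in the second paragraph: verifying that an integral equation over the multigraded ring $\mathcal R'$ can be refined so that its coefficients lie in the correct diagonal pieces $J'_{ji}$. Once that is in place, the rest is a direct passage through Theorems \ref{Theorem13} and \ref{Theorem2} together with density of $\ZZ_+^r$.
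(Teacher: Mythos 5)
Your proposal is correct and takes essentially the same route as the paper: fix $n_1,\ldots,n_r\in\ZZ_+$, pass to the diagonal single-graded filtrations $\mathcal J'\subset\mathcal J$, invoke Theorem \ref{Theorem13} to equate the limits, and then identify the polynomials $G$ and $G'$. The paper simply asserts the integrality of $\bigoplus_i J_i$ over $\bigoplus_i J'_i$ without comment, whereas you supply the standard multidegree refinement argument — a useful addition. The only cosmetic difference at the end is that the paper cites Lemma \ref{Prop1} (recovering the degree-$d$ coefficients from limits along integral rays) to conclude the coefficients match, while you appeal to Zariski density of $\ZZ_+^r$; both correctly deduce $G=G'$ from agreement of values on $\ZZ_+^r$.
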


\begin{proof} For $n_1,\ldots,n_r\in\ZZ_+$, the ring $\bigoplus_{m\ge 0}I(1)_{mn_1}I(2)_{mn_2}\cdots I(r)_{mn_r}$  is integral over $\bigoplus_{m\ge 0}I(1)'_{mn_1}I(2)'_{mn_2}\cdots I(r)'_{mn_r}$, so
$$
\lim_{m\rightarrow\infty}\frac{\lambda(M/I(1)_{mn_1}I(2)_{mn_2}\cdots I(r)_{mn_r}M)}{m^d}
=\lim_{m\rightarrow\infty}\frac{\lambda(M/I(1)'_{mn_1}I(2)'_{mn_2}\cdots I(r)'_{mn_r}M)}{m^d}
$$
by Theorem \ref{Theorem13}. Thus we have the equalities (\ref{eq15}) by Lemma \ref{Prop1} and Theorem \ref{Theorem2}.
\end{proof}

\section{Multigraded Filtrations}\label{Multi}

 We define a multigraded filtration $\mathcal I=\{I_{n_1,\ldots,n_r}\}_{n_1,\ldots,n_r\in\NN}$ of ideals on a ring $R$ to be a collection of ideals of $R$ such that
 $R=I_{0,\ldots,0}$, 
 $$
 I_{n_1,\ldots,n_{n_{j-1}},n_j+1,n_{j+1},\ldots,n_r}\subset I_{n_1,\ldots,n_{j-1},n_j,n_{j+1},\ldots,n_r}
 $$
  for all $n_1,\ldots,n_r\in\NN$ and
 $I_{a_1,\ldots,a_r}I_{b_1,\ldots,b_r}\subset I_{a_1+b_1,\ldots,a_r+b_r}$ whenever $a_1,\ldots,a_r,b_1,\ldots,b_r\in \NN$.
 
 A multigraded filtration
 $\mathcal I=\{I_{n_1,\ldots,n_r}\}$ of ideals on a local ring $R$ is a  multigraded filtration of $R$ by $m_R$-primary ideals if $I_{n_1,\ldots,n_r}$ is $m_R$-primary whenever $n_1+\cdots+n_r>0$.
 
 If $R$ is a Noetherian local ring of dimension $d$ with $\dim N(\hat R)<d$ and  $\mathcal I=\{I_{n_1,\ldots,n_r}\}$ is a  multigraded filtration of $R$ by $m_R$-primary ideals, then we can define (by Theorem \ref{TheoremI20}) the function 
 \begin{equation}\label{MG1}
P(n_1,\ldots,n_r)=\lim_{m\rightarrow \infty}\frac{\lambda(R/I_{mn_1,\ldots,mn_r})}{m^d}
\end{equation}
and ask if it has polynomial like behavior.  The following example shows that it can be far from polynomial like, so Theorem \ref{Theorem2} does not  have a good generalization to arbitrary multigraded filtrations of $m_R$-primary ideals. 

Let $R=k[[t]]$ be a power series ring over a field $k$. For $(n_1,n_2)\in \NN^2$, define $\alpha:\NN^2\rightarrow \NN$ by 
$$
\alpha(n_1,n_2)=\lceil \sqrt{n_1^2+n_2^2}\rceil
$$
where for a real number $x$,  $\lceil x \rceil$ is the smallest integer $a$ such that $x\le a$.

Define $I_{n_1,n_2}=(t^{\alpha(n_1,n_2)})$ and $\mathcal I=\{I_{n_1,n_2}\}$. Then $\mathcal I$ is a multigraded filtration of $R$ by $m_R$-primary ideals. For $(n_1,n_2)\in \NN^2$, we have that
$$
P(n_1,n_2)=\lim_{m\rightarrow \infty}\frac{\lambda(R/I_{mn_1,mn_2})}{m}=\lim_{m\rightarrow\infty}\frac{\lceil m\sqrt{n_1^2+n_2^2}\rceil}{m}=\lceil \sqrt{n_1^2+n_2^2}\rceil.
$$

We now  show that the function (\ref{MG1})  is polynomial like in an important situation. 
Let $R$ be an excellent, normal local ring of dimension two, and let $f:X\rightarrow\mbox{Spec}(R)$ be a resolution of singularities, with integral exceptional divisors $E_1,\ldots,E_r$. A resolution of singularities of a two dimensional, excellent local domain always exists by \cite{Li} or \cite{CJS}. If $n_1,\ldots,n_r\in \NN$, let $D_{n_1,\ldots,n_r}=\sum_{i=1}^rn_iE_i$, and define
$$
I_{n_1,\ldots,n_r}=\Gamma(X,\mathcal O_X(-D_{n_1,\ldots,n_r})),
$$
 which is an $m_R$-primary  ideal in $R$.  Then $\{I_{n_1,\ldots,n_r}\}$ is a multigraded filtration of $R$ by $m_R$-primary ideals. By Theorem 4 of \cite{C4}, if the divisor class group ${\rm Cl}(R)$ is not a torsion group, then there exists a resolution of singularities $f:X\rightarrow \mbox{spec}(R)$ and an exceptional divisor $F$ on $X$ such that $\bigoplus_{n\ge 0}\Gamma(X,\mathcal O_X(-nF))$ is not a finitely generated $R$-algebra, so that $\bigoplus_{n_1,\ldots,n_r\ge 0} I_{n_1,\ldots,n_r}$ is not a finitely generated $R$-algebra, and thus the multigraded filtration $\{I_{n_1,\ldots,n_r}\}$ is not Noetherian. In Proposition 6.3 \cite{CHR}, it is shown that there exists an  abstract complex of polyhedral sets $\mathcal P$ whose union is $\QQ_{\ge 0}$  (Definition 4.4 \cite{CHR}), such that for $P\in \mathcal P$ and $(n_1,\ldots,n_r)\in P\cap \NN^r$, 
 $$
 \lambda(R/I_{n_1,\ldots,n_r})=Q_P(n_1,\ldots,n_r)+L_P(n_1,\ldots,n_r)+\Phi_P(n_1,\ldots,n_r),
 $$
 where $Q_P(n_1,\ldots,n_r)$ is a quadratic polynomial with rational coefficients, $L_P(n_1,\ldots,n_r)$ is a linear function with periodic coefficients (a linear quasi polynomial) and $\Phi_P(n_1,\ldots,n_r)$ is a bounded function ($|\Phi_P(n_1,\ldots,n_r)|$ is bounded). Thus the function defined in (\ref{MG1}) is piecewise  polynomial, with
  $$
 P(n_1,\ldots,n_r)=\lim_{m\rightarrow \infty}\frac{\lambda(R/I_{mn_1,\ldots,mn_r})}{m^2}=Q_P(n_1,\ldots,n_r)
 $$
 if $(n_1,\ldots,n_r)\in P$. We have the further interpretation of  $P(n_1,\ldots,n_r)$ as  the intersection product
$$
P(n_1,\ldots,n_r)=-\frac{1}{2}(\Delta_{n_1,\ldots,n_r}^2)
$$
where $\Delta_{n_1,\ldots,n_r}$ is the  Zariski $\QQ$-divisor associated to  $-n_1E_1-\cdots-n_rE_r$ \cite[Formula (7)]{CHR}.


\begin{thebibliography}{1000000000}
\bibitem{Ab} S.S. Abhyankar, Resolution of Singularities of Embedded Algebraic Surfaces, Academic Press, New York (1966).
\bibitem{Bh} P.B. Bhattacharya, The Hilbert function of two ideals, Proc. Camb. Phil. Soc. 53 (1957), 568 - 575.
\bibitem{Bou} N. Bourbaki, Commutative Algebra, Chapters 1 - 7, Springer Verlag, 1989.
\bibitem{CJS} V. Cossart, U. Jannsen and S. Saito, Canonical embedded and non-embedded resolution of singularities  for excellent two-dimensional schemes, arXiv:0905.2191.
\bibitem{C4} S.D. Cutkosky, On unique and almost unique factorization of complete ideals II, Inventiones Math. 98 (1989), 59-74.
\bibitem{C1} S.D. Cutkosky, Multiplicities associated to graded families of ideals,  Algebra and Number Theory 7 (2013), 2059 - 2083.
\bibitem{C2} S.D. Cutkosky, Asymptotic multiplicities of graded families of ideals and linear series,  Advances in Mathematics 264 (2014), 55 - 113.
\bibitem{C3} S.D. Cutkosky, Asymptotic Multiplicities, Journal of Algebra 442 (2015), 260 - 298.
\bibitem{CHR} S.D. Cutkosky, J\"urgen Herzog and Ana Reguera, Poincar\'e series of resolutions of surface singularities, Transactions of the AMS 356 (2003), 1833 - 1874.
\bibitem{CS} S.D. Cutkosky and V. Srinivas, On a problem of Zariski on dimensions of linear systems, Annals Math. 137 (1993), 551 - 559.
\bibitem{ELS} L. Ein, R. Lazarsfeld and K. Smith, Uniform Approximation of Abhyankar valuation ideals in smooth function fields,
Amer. J. Math. 125 (2003), 409 - 440.
\bibitem{GGV} K. Goel, R.V. Gurjar and J.K. Verma, The Minkowski's (in)equality for multiplicity of ideals, preprint. 
 \bibitem{HHRT} M. Herrmann, E. Hyry, J. Rible and Z. Tang, Reduction numbers and multiplicities of multigraded structures, J. Algebra 17 (1997), 311- 341.
 \bibitem{H} J. Huh, Milnor numbers of projective hypersurfaes and the chromatic polynomial of graphs, J. Amer. Math. Soc. 25 (2012), 907 - 927.
\bibitem{J} N. Jacobson, Basic Algebra I, Second Edition, W.H. Freeman and Company, New York, 1985.
\bibitem{Ka} D. Katz, Note on multiplicity, Proc. Amer. Math. Soc. 104 (1988), 1021 - 1026.
\bibitem{KK} K. Kaveh and G. Khovanskii, Newton-Okounkov bodies, semigroups of integral points, graded algebras and intersection theory,  Annals of Math. 176 (2012), 925 - 978.
\bibitem{KK1} K. Kaveh and G. Khovanskii, Convex Bodies and Multiplicities of Ideals, Proc. Steklov Inst. Math. 286 (2014), 268 - 284.
\bibitem{KV} D. Katz and J. Verma, Extended Rees algebras and mixed multiplicities, Math. Z. 202 (1989), 111-128.
\bibitem{L} S. Lang, Algebra, revised third edition, Springer-Verlag, New York, Berlin, Heidelberg (2002).
\bibitem{Li} J. Lipman, Desingularization of 2-dimensional schemes, Annals of Math. 107 (1978), 115 - 207.
\bibitem{LM} R. Lazarsfeld and M. Musta\c{t}\u{a}, Convex bodies associated to linear series, Ann. Sci. Ec. Norm. Super 42 (2009) 783 - 835.
\bibitem{Ma2} H. Matsumura, Commutative Algebra, second edition, Benjamin/Cummings, Reading Massachusetts (1980).
\bibitem{MS} H. Muhly and M. Sakuma, Asymptotic factorization of ideals, J. London Math. Soc. 38 (1963), 341 - 350.
\bibitem{Mus} M. Musta\c{t}\u{a}, On multiplicities of graded sequence of ideals, J. Algebra 256 (2002), 229-249.
\bibitem{N} M. Nagata, Local Rings, Wiley, 1962.
\bibitem{Ok} A. Okounkov, Why would multiplicities be log-concave?, in The orbit method in geometry and physics, Progr. Math. 213, 2003, 329-347.
\bibitem{R} D. Rees, $\mathcal A$-transforms of local rings and a theorem on multiplicities of ideals, Proc. Cambridge Philos. Soc. 57 (1961), 8 - 17.
\bibitem{R1} D. Rees, Multiplicities, Hilbert functions and degree functions. In Commutative algebra: Durham 1981
(Durham 1981), London Math. Soc. Lecture Note Ser. 72, Cambridge, New York, Cambridge Univ. Press, 1982, 170 - 178.
\bibitem{RS} D. Rees and R. Sharp, On a Theorem of B. Teissier on Multiplicities of Ideals in Local Rings, J. London Math. Soc. 18 (1978), 449-463.
\bibitem{S} I. Swanson, Mixed multiplicities, joint reductions and a theorem of Rees, J. London Math. Soc. 48 (1993), 1 - 14.
\bibitem{HS} I. Swanson and C. Huneke, Integral Closure of Ideals, Rings and Modules, Cambridge University Press, 2006.
\bibitem{T1} B. Teissier Cycles \'evanescents,sections planes et condition de Whitney, Singularti\'es \`a Carg\`ese 1972, Ast\'erique 7-8 (1973)
\bibitem{T2} B. Teissier, Sur une in\'egalit\'e pour les multipliciti\'es (Appendix to a paper by D. Eisenbud and H. Levine),
Ann. Math. 106 (1977), 38 - 44.
\bibitem{T3} B. Teissier, On a Minkowski type  inequality for multiplicities II, In C.P. Ramanujam - a tribute, Tata Inst. Fund. Res. Studies in Math. 8, Berlin - New York, Springer, 1978.
\bibitem{TV} N.V. Trung and J. Verma, Mixed multiplicities of ideals versus mixed volumes of polytopes, Trans. Amer. Math. Soc. 359 (2007), 4711 - 4727.
\bibitem{ZS2} O. Zariski and P. Samuel, Commutative Algebra, Volume II,  Van Nostrand, Princeton (1960).

\end{thebibliography}
\end{document}